\newtheorem{thm}{Theorem}[section]
\newtheorem{lm}[thm]{Lemma}
\newtheorem{co}[thm]{Corollary}
\newtheorem{pr}[thm]{Proposition}
\theoremstyle{definition}
\newtheorem{df}[thm]{Definition}
\newtheorem{exm}[thm]{Example}
\newtheorem{exms}[thm]{Examples}
\newtheorem{rem}[thm]{Remark}
\newtheorem{nota}[thm]{Notation}
\numberwithin{equation}{section}
\DeclareMathOperator{\diag}{diag}
\DeclareMathOperator{\Ker}{Ker}
\DeclareMathOperator{\Rad}{Rad}
\DeclareMathOperator{\Sp}{Sp}
\DeclareMathOperator{\ad}{ad}
\newcommand{\id}{\mathop{\mathrm{id}}\nolimits}
\newcommand{\ptn}{\mathbin{\widehat{\otimes}}}
\title{Envelopes in the class of Banach algebras of polynomial growth and $C^\infty$-functions of a finite number of free variables}
\author{O. Yu. Aristov}
\address{Institute for Advanced Study in Mathematics of Harbin Institute of Technology, Harbin 150001, China;
\newline\indent
Suzhou Research Institute of Harbin Institute of Technology, Suzhou 215104, China}
\keywords{Banach algebra of polynomial growth, envelope with respect to a class of Banach algebras, free $C^\infty$-function, universal enveloping algebra, quantum plane,  quantum group SL(2)}
\email{aristovoyu@inbox.ru}
\begin{document}

\begin{abstract}
We introduce the notion of envelope of a topological algebra (in particular, an arbitrary associative algebra) with respect to a class of Banach algebras. In the case of the class of real Banach algebras of polynomial growth, i.e., admitting a $C^\infty$-functional calculus for every element, we get a functor that maps the algebra of polynomials in $k$ variables to the algebra of $C^\infty$-functions on $\mathbb{R}^k$. The envelope of a general commutative or non-commutative algebra can be treated as an algebra of $C^\infty$-functions on some commutative or non-commutative space. In particular, we describe the envelopes of the universal enveloping algebra of finite-dimensional Lie algebras, the coordinate algebras of the quantum plane and quantum group $SL(2)$ and also look at some commutative examples. A result on algebras of `free $C^\infty$-functions', i.e., the envelopes of  free associative algebras of finite rank $k$, is announced for general $k$ and proved for $k\leqslant 2$.
\end{abstract}

\maketitle

\markright{Envelopes and $C^\infty$-functions of free variables}

{\small\it
\hfill Dedicated to my beloved teacher,}

{\small\it
\hfill Professor Alexander Helemskii,}

{\small\it
\hfill  on the occasion of his 80th birthday}
\bigskip

The Arens--Michael envelope can be treated as a functor from the algebraic geometry to the analytic geometry (at the level of function algebras). In particular, it corresponds to the analytization of an affine scheme of finite type; see \cite{Pi15}. An advantage of this view on the analytization is that the functor is well defined even for non-commutative algebras. Although this approach also has its drawbacks (for example,  algebras with rich representation theory such as the Weyl algebra can vanish), it works well in many cases; see, e.g., \cite{ArAMN}. We are interested in finding a similar functor from the algebraic geometry to the differential geometry, that is, from the category of associative real algebras to some category of real topological associative  algebras that exhibits the same behaviour as the Arens--Michael envelope in the analytic case. This means that it must transform the algebra of real polynomials $\mathbb{R}[x_1,\ldots ,x_k]$, $k\in\mathbb{N}$, into the algebra $C^\infty(\mathbb{R}^k)$ of real-valued infinitely differentiable functions. Moreover, it is natural to expect that the functor should act non-trivially on a number of classical examples of non-commutative algebras.

Here we consider a construction which, at first sight, should be even more limited in its possibilities  than the Arens--Michael envelope  but, in fact, its application to certain classical non-commutative algebras demonstrates the non-triviality of the theory and links it to various areas of algebra and analysis. Namely, we propose to use the envelope with respect to the class of Banach algebras of polynomial growth as a bridge to the $C^\infty$-case. A real unital Banach algebra $B$ is said to be of \emph{polynomial growth} if for every $b\in B$ there are $K>0$ and $\alpha\geqslant0$ such that $\|e^{isb}\|\leqslant K (1+|s|)^{\alpha}$ for all $s\in\mathbb{R}$; equivalently,  $B$ admits a $C^\infty$-functional calculus associated with $b$; see~\cite{ArOld}. (Note that the theory only makes sense if the ground field is~$\mathbb{R}$).

The axiomatic approach based on the class of Banach algebras of polynomial growth is indeed adequate to the problem since the natural embedding of $\mathbb{R}[x_1,\ldots,x_m]\to C^\infty(\mathbb{R}^m)$ turns out to be an envelope with respect to this class; see Proposition~\ref{envpol}. By definition, the envelope is an algebra that is locally of polynomial growth (contained in the class $\mathsf{PGL}$ of projective limits of algebras having polynomial growth) and is universal in the sense that any homomorphism into a Banach algebra of polynomial growth factors through it. The envelope exists for each  real associative algebra $A$ and, furthermore, for each real topological algebra.  In fact, first examples were considered by the author in~\cite{ArOld}, although the corresponding terminology was not introduced there.

Note that  Banach algebras of polynomial growth form a rather narrow class since each of them is commutative modulo Jacobson radical and, moreover, the radical is nilpotent; see~\cite{ArOld} and also Theorem~\ref{RadPG} below. In particular, all irreducible representations are one-dimensional. Despite this fact, the author believes that this article shows that the theory of envelopes turns out to be quite rich.  Constructing an algebra of free non-commutative functions of class $C^\infty$ as an envelope of an algebra of free polynomials (i.e., a free associative algebra) is of particular interest. Our central result, Theorem~\ref{fuctcafree}, gives an explicit description of this algebra of `free $C^\infty$-functions' of finite rank~$k$. We denote it by $C^\infty_{\mathfrak{f}_k}$. As a Fr\'echet space, $C^\infty_{\mathfrak{f}_k}$ has the form
$$
C^\infty(\mathbb{R}^k)\ptn \Bigl(\,\prod_{n=0}^\infty C^\infty(X)^{\ptn n}\Bigr),
$$
where $\ptn$ denotes the projective tensor product and $X$ is a disjoint union of vector spaces.  The proof of this theorem is quite technical and surprisingly non-trivial. In this paper, we only prove the theorem only for $k=2$ (when $k=1$ it is trivial modulo known results). A proof for the general case is given in~\cite{ArNew2}.

Being a projective limit of Banach algebras of polynomial growth, $C^\infty_{\mathfrak{f}_k}$ also has only one-dimensional irreducible representations and its Gelfand spectrum can be identified with~$\mathbb{R}^k$. This algebra looks rather modest compared to the algebra of `free entire functions' \cite{Ta72,Ta73}, which has not only finite-dimen\-sional irreducible representations of arbitrary dimensions, but also infinite-dimensional representations in abundance. However, $C^\infty_{\mathfrak{f}_k}$ has a plenty of triangular representations, which compensates for the above disadvantage to some extent.

Note that the algebras of ‘smooth functions’ traditionally considered in the non-commu\-ta\-tive differential geometry in the spirit of Connes are usually self-adjoint subalgebras of $C^*$-algebras (in particular, their radicals are trivial). But, in our reasoning, algebras of triangular matrices are fundamental and, as a consequence, the examples discussed in this paper admit a (trivial) involution only in the commutative case. This restriction seems  to be an essential feature of our approach. Furthermore, the presented theory has the shortcoming that it does not include some classical examples, such as the non-commutative tori (since complex numbers are used in their definition) and the quantum disc (see Example~\ref{qudi}).

It is worth mentioning  other axiomatic approaches to non-commutative algebras with some $C^\infty$ structure. The first theory uses differential sequences of seminorms; see the version of Blackadar and Cuntz  in \cite{BC91} and the version of Kissin and Shulman in \cite{KSh}. Unfortunately, within this framework, it has not yet been possible either to prove the existence of a $C^\infty$-functional calculus or to construct a counterexample.  There is a functional calculus for a somewhat narrower class~\cite{KSh}, but we cannot yet say more. The second approach, which uses systems of partial derivatives, is proposed by Akbarov \cite{Ak15,Ak17}. Despite the fact that some interesting results have been obtained, this theory does not seem promising for non-commutative algebras.

Although theoretical constructions occupy a considerable part of this text, the emphasis is on examples. In addition to free algebras, we consider the universal enveloping algebras of finite-dimensional Lie algebras (refining results in~\cite{ArOld}), the coordinate algebras of the quantum plane and quantum group $SL_q(2,\mathbb{R})$; see \S\,\ref{s:EnvComm}. The commutative case is also of interest and discussed there. We examine the general properties of the class $\mathsf{PGL}$ in \S\,\ref{s:BanPG} and also give a universal construction of $C^\infty$-tensor algebra in \S\,1. Note that the $C^\infty$-tensor algebra of a $k$-dimensional space is exactly the algebra of `free $C^\infty$-functions' of rank~$k$.

As regards abstract considerations, a general theory of envelopes with respect to a class of Banach algebras is developed in \S\,\ref{s:envel}.  Until very recently, only the envelopes with respect to the class of all Banach algebras, the Arens--Michael envelopes, were considered. However, to develop a satisfactory theory of envelopes it suffices to take an arbitrary class of Banach algebras that is stable under passing to finite products and closed subalgebras. Also, it is desirable that this class to be stable under passing to quotients. This assumption has a certain geometric sense but we will not discuss this topic here. The following classes satisfy all three conditions:

--- the class $\mathsf{PG}$ of Banach algebras of polynomial growth;

--- the class of strictly real Banach algebras in the sense of~\cite{In64};

--- the class of Banach algebras satisfying a polynomial identity.

Note that (in the case of real algebras) the second and third classes both contain $\mathsf{PG}$, which plays major  role in this article. The enveloping functor with respect to the third class is introduced in~\cite{ArDAAF} and also some examples are given there. The stability properties for strictly real Banach algebras are proved in \cite[Theorem 4.4]{Mi93} but the envelopes have not been studied yet. Note also that for algebras with involution, envelopes  with respect to the class of $C^*$-algebras have been studied earlier; see, for example,  \cite{Fr81,Ku13}.

The author wishes to thank the Institute of Mathematics and Mechanics at the Kazan Federal University and especially Renat Gumerov for the hospitality during the visit to Kazan in 2023.

\tableofcontents

\section{Local containment and envelopes}
\label{s:envel}

We denote by $\mathsf{TA}$ the category of topological algebras over $\mathbb{C}$ or $\mathbb{R}$, that is, topological vector spaces endowed with separately continuous multiplication; see \cite[Chapter~1, \S\,1, p.\,4, Definition 1.1]{Ma86} or \cite[\S\,1, p.\,6, Definition 1.6]{Fr05}.  In general, the underlying vector space is not assumed  to be locally convex or complete and the existence of identity is also not assumed. Here the morphisms are continuous homomorphisms. Note that a subclass of objects in $\mathsf{TA}$ can be treated as a full subcategory. Following the tradition of functional analysis, we use the somewhat old-fashioned term \emph{projective limit} (also known as `inverse limit') for what is called \emph{directed limit} in category theory.

\begin{df}\label{loccond}
Let  $\mathsf{C}$ be a class of Banach algebras. We say that a topological algebra is a $\mathsf{CL}$ algebra or \emph{locally in $\mathsf {C}$} if it is isomorphic to a projective limit (in $\mathsf{TA}$) of algebras contained in $\mathsf{C}$.
\end{df}

Recall that a complete topological algebra whose topology  can be determined by
a family of submultiplicative seminorms is called an \emph{Arens--Michael algebra}. If $\mathsf{C}$ is contained in the class of all Banach algebras, then each algebra locally contained in $\mathsf{C}$ is obviously an Arens--Michael algebra. Furthermore, an Arens--Michael algebra is exactly an algebra that is locally in the class of all Banach algebras. Note also that the limit of a projective system of Banach algebras in $\mathsf{TA}$ coincides with the limit in the category of Arens--Michael algebras; see, e.g., \cite[Chapter~3, \S\,2, p.\,84, (2.8)]{Ma86}.

When considering envelopes, we always assume by default that an algebra without an initially given topology is endowed with the strongest locally convex one. Thus every associative algebra can be treated as a topological algebra.

\begin{df}\label{defengen}
Let $\mathsf{C}$ be a class of Banach algebras  (over $\mathbb{C}$ or $\mathbb{R}$), $\mathsf{CL}$ be defined as in Definition~\ref{loccond}, $F\!:\mathsf{CL}\to \mathsf{TA}$ be a corresponding forgetful functor. If $F$ admits a left adjoint functor~$L$, then the \emph{envelope with respect to the class $\mathsf{C}$} of a topological algebra~$A$ is the pair $(\widehat A^{\,\mathsf{C}}, \iota_A)$, where $\iota_A$ is the component of the identity adjunction (the functor morphism $\iota\!:\id_{\mathsf{TA}}\Rightarrow F\circ L$) corresponding to $A$, and $\widehat A^{\,\mathsf{C}}\!:=FL(A)$.
\end{df}

Of course, the definition makes sense not only for $\mathsf{CL}$ but also for an arbitrary full subcategory in $\mathsf{TA}$, but we do not need it here.

The choice of the term `envelope' is more likely due to historical considerations; cf. \cite[Chapter~5]{X2}. It could just as well be called  'free functor'. However, we reserve the last term for those cases where the forgetful functor changes the algebraic type of the category (such as `associative algebras $\rightarrow$ vector spaces' or `associative algebras $\rightarrow$ sets'). Akbarov in his papers \cite{Ak15,Ak17} also uses the term `envelope', which has a similar but different meaning.

Note also that the homomorphism  $A\to\widehat A^{\,\mathsf{C}}$ does not have to be injective; see Example~\ref{nontri}(A).

\begin{rem}\label{simpdef}
Definition~\ref{defengen} can be formulated without the use of adjoint functors as follows.
An \emph{envelope with respect to the class $\mathsf{C}$} is a pair $(\widehat A^{\,\mathsf{C}}, \iota_A)$, where $\widehat A^{\,\mathsf{C}}$ is locally in  $\mathsf{C}$ and $\iota_A$ is a continuous homomorphism $A \to \widehat A^{\,\mathsf{C}}$ if for every~$B$ in $\mathsf{C}$ and every continuous homomorphism $\varphi\!: A \to B$ there is a unique continuous homomorphism
$\widehat\varphi\!:\widehat A^{\,\mathsf{C}} \to B$ such that the diagram
\begin{equation*}
  \xymatrix{
A \ar[r]^{\iota_A}\ar[rd]_{\varphi}&\widehat A^{\,\mathsf{C}}\ar@{-->}[d]^{\widehat\varphi}\\
 &B\\
 }
\end{equation*}
is commutative.

Note that according to Definition~\ref{defengen} we take $B$ in $\mathsf{CL}$ but it suffices to take it in $\mathsf {C}$.
\end{rem}

The following proposition gives sufficient conditions for the existence of an envelope.

\begin{pr}\label{exisenvgen}
Let $\mathsf{C}$ be a class of Banach algebras stable under passing to finite products and closed subalgebras. Then the envelope  with respect to $\mathsf{C}$ exists and is unique up to natural isomorphism for every topological algebra.
\end{pr}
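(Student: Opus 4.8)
\textbf{Plan.} The strategy is to imitate the classical construction of the Arens--Michael envelope, replacing the class of all Banach algebras by $\mathsf{C}$, with one cosmetic but essential twist: the construction must be phrased in terms of seminorms on $A$ rather than homomorphisms out of $A$. This sidesteps the only set-theoretic subtlety (the homomorphisms from $A$ into members of $\mathsf{C}$ form a proper class, whereas the continuous seminorms on the fixed topological vector space $A$ form a genuine set). Uniqueness is formal and should be disposed of first: if two pairs satisfy the universal property of Remark~\ref{simpdef}, applying each universal property to the other yields mutually inverse continuous homomorphisms compatible with the structure maps, and these are unique by the same property; the resulting isomorphism is natural for the same reason.

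For existence, I would introduce the set $\mathcal P=\mathcal P_{\mathsf C}(A)$ of continuous submultiplicative seminorms $p$ on $A$ such that the completion $\wh A_p$ of $A/\Ker p$ with respect to $p$ lies in~$\mathsf{C}$. Each continuous homomorphism $\phi\colon A\to B$ with $B\in\mathsf{C}$ gives rise to $p_\phi(a):=\|\phi(a)\|_B\in\mathcal P$, since $\wh A_{p_\phi}$ is the closure $\ol{\phi(A)}$ in $B$, a closed subalgebra of $B$, hence in $\mathsf C$; moreover $\phi$ factors through the canonical map $A\to\wh A_{p_\phi}$. The key structural point, and the one place where both hypotheses on $\mathsf C$ are used, is that $\mathcal P$ is directed: given $p_1,p_2\in\mathcal P$ with canonical maps $A\to\wh A_{p_1}$ and $A\to\wh A_{p_2}$, form $\wh A_{p_1}\times\wh A_{p_2}$ (in $\mathsf C$ by stability under finite products, equipped with the max-norm), take the closure of the image of the diagonal map $A\to\wh A_{p_1}\times\wh A_{p_2}$ (in $\mathsf C$ by stability under closed subalgebras), and observe that the associated seminorm is $\max(p_1,p_2)$; thus $\max(p_1,p_2)\in\mathcal P$ and dominates $p_1$ and $p_2$. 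I expect this directedness step to be essentially the entire content of the proof.

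Then I would set $\wh A^{\,\mathsf C}:=\varprojlim_{p\in\mathcal P}\wh A_p$, i.e.\ the completion of $A$ in the locally convex topology generated by $\mathcal P$ (and $\wh A^{\,\mathsf C}:=0$ when $\mathcal P=\emptyset$), with $\io_A$ the canonical map. Since the defining seminorms are submultiplicative, $\wh A^{\,\mathsf C}$ is an Arens--Michael algebra, in particular an object of $\mathsf{TA}$, and being a projective limit in $\mathsf{TA}$ of members of $\mathsf C$ it is locally in $\mathsf C$. The universal property of Remark~\ref{simpdef} is then immediate: for $\phi\colon A\to B$ with $B\in\mathsf C$, compose the projection $\wh A^{\,\mathsf C}\to\wh A_{p_\phi}$ with the inclusion $\wh A_{p_\phi}=\ol{\phi(A)}\hookrightarrow B$ to get $\wh\phi$ with $\wh\phi\,\io_A=\phi$, and uniqueness of $\wh\phi$ follows because $\io_A(A)$ is dense in $\wh A^{\,\mathsf C}$ and $B$ is Hausdorff. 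To match Definition~\ref{defengen} (targets in $\mathsf{CL}$ rather than in $\mathsf C$), I would note that any $D$ locally in $\mathsf C$ is a projective limit $\varprojlim D_j$ with $D_j\in\mathsf C$, so a continuous homomorphism $A\to D$ decomposes into a compatible family $A\to D_j$, each factoring uniquely through $\wh A^{\,\mathsf C}$; by uniqueness these assemble to a unique continuous homomorphism $\wh A^{\,\mathsf C}\to D$. The same uniqueness makes $A\mapsto\wh A^{\,\mathsf C}$ a functor and $\io$ a natural transformation, exhibiting $L$ as left adjoint to $F$ and completing the argument; uniqueness up to natural isomorphism is the formal consequence noted at the outset.
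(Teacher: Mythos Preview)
Your proposal is correct and follows essentially the same approach as the paper: both construct the envelope as the projective limit over the set of continuous submultiplicative seminorms on $A$ whose Banach completions lie in $\mathsf C$, and both obtain directedness of this set by forming finite products and taking the closure of the diagonal image. Your write-up is considerably more detailed than the paper's three-sentence sketch---you make explicit the set-theoretic reason for working with seminorms, the factorization $\phi\mapsto p_\phi$, the density argument for uniqueness of $\wh\phi$, and the passage from $\mathsf C$-targets to $\mathsf{CL}$-targets---but the underlying construction is the same.
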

\begin{proof}
Given a topological algebra $A$, consider the set of all continuous sub\-multi\-pli\-cative seminorms on~$A$, the completions relative to which belong to $\mathsf{C}$ and the corresponding family of continuous homomorphisms from $A$ to the completions. Taking the closures of the images of $A$ in all possible finite products, it is easy to obtain linking homomorphisms that form a directed system in $\mathsf{C}$. Its limit in $\mathsf{TA}$ obviously satisfies the conditions in the definition of the envelope.
\end{proof}

\begin{rem}
In \cite{Dix}, Dixon considered \emph{varieties} of Banach algebras, which are defined as classes of Banach algebras closed under passing to closed subalgebras, quotient algebras, bounded products and images under isometric isomorphisms. The conditions we impose on a class of Banach algebras are weaker. In particular, it is easy to see that $\mathsf{PG}$ defined in \S\,\ref{s:BanPG} and the class of Banach algebras satisfying a polynomial identity are not closed under bounded products and so they do not form a variety in the sense of Dixon.
\end{rem}

\subsection*{Tensor $\mathsf{CL}$ algebras}
In this section, we fix a class $\mathsf{C}$ of (real or complex) Banach algebras stable  under passing to finite products and closed subalgebras. We consider a left adjoint functor to the forgetful functor from the category $\mathsf{CL}$ to complete locally convex spaces. By analogy with the case of associative algebras and vector spaces, we call the resulting objects  tensor $\mathsf{CL}$ algebras.

\begin{df}\label{Cinften}
Let $E$ be a complete locally convex space. A \emph{tensor $\mathsf{CL}$ algebra} of $E$ is a $\mathsf{CL}$ algebra $T^{\mathsf{C}}(E)$ equipped with a continuous linear map $\mu\! :E\to T^{\mathsf{C}}(E)$ that is the identity component of the left adjoint to the forgetful functor from $\mathsf{CL}$ to the category of complete locally convex spaces.
\end{df}

\begin{rem}
Thus a tensor $\mathsf{CL}$ algebra of $E$ satisfies the following universal property:
for every $\mathsf{CL}$ algebra~$B$ and every continuous linear map
$\psi\!: E \to B$ there is a unique continuous homomorphism
$\widehat\psi\!:T^{\mathsf{C}}(E)\to B$ such that the diagram
\begin{equation*}
  \xymatrix{
E \ar[r]^{\mu}\ar[rd]_{\psi}&T^{\mathsf{C}}(E)\ar@{-->}[d]^{\widehat\psi}\\
 &B\\
 }
\end{equation*}
is commutative.
\end{rem}

Of course, to verify that a tensor $\mathsf{CL}$ algebra exists for each $E$ we need to prove the existence of a left adjoint functor. To do this we use analytic tensor algebras.

If $\mathsf{B}$ is the class of all Banach algebras, then the tensor $\mathsf{BL}$ algebra is called the \emph{analytic tensor algebra} (or \emph{Arens--Michael tensor algebra}) and denoted by $\widehat{T}(E)$. In the case of the ground field~$\mathbb{C}$,  see a proof of the existence and an explicit construction in \cite[\S\,4.2]{Pir_qfree}. For~$\mathbb{R}$ the proof is identical.

\begin{pr}\label{exisCiten}
The tensor $\mathsf{CL}$ algebra exists and is unique up to natural isomorphism for every complete locally convex space.
\end{pr}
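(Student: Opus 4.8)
The plan is to reduce the existence of the tensor $\mathsf{CL}$ algebra to the already-known existence of the analytic tensor algebra $\wh T(E)$, by taking the envelope of the latter with respect to $\mathsf{C}$. First I would set $T^{\mathsf{C}}(E):=\wh{\wh T(E)}^{\,\mathsf{C}}$, the envelope of the analytic tensor algebra in the class $\mathsf C$, which exists by Proposition~\ref{exisenvgen} since $\mathsf C$ is stable under finite products and closed subalgebras. The canonical map is the composition
\begin{equation*}
\mu\!:E\xrightarrow{\ \mu_{\wh T}\ }\wh T(E)\xrightarrow{\ \io_{\wh T(E)}\ }T^{\mathsf{C}}(E),
\end{equation*}
where $\mu_{\wh T}$ is the universal linear map into the analytic tensor algebra and $\io_{\wh T(E)}$ is the envelope homomorphism. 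By construction $T^{\mathsf{C}}(E)$ is a $\mathsf{CL}$ algebra and $\mu$ is continuous and linear.

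Next I would verify the universal property. Given a $\mathsf{CL}$ algebra $B$ and a continuous linear map $\psi\!:E\to B$, I would first factor $\psi$ through $\wh T(E)$: writing $B$ as a projective limit $\varprojlim B_\lambda$ with each $B_\lambda$ in $\mathsf C$, each composite $E\to B\to B_\lambda$ is a continuous linear map into a Banach algebra, hence extends uniquely to a continuous homomorphism $\wh T(E)\to B_\lambda$ by the universal property of $\wh T(E)$ (noting that $\wh T(E)$, being an Arens--Michael algebra, is in particular $\mathsf{BL}$, so linear maps into Banach algebras extend); these are compatible and assemble into a unique continuous homomorphism $\theta\!:\wh T(E)\to B$ with $\theta\circ\mu_{\wh T}=\psi$. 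Now $B$ is locally in $\mathsf C$, so by the universal property of the envelope (Remark~\ref{simpdef}, applied with $B$ in $\mathsf{CL}$ as noted there) the homomorphism $\theta$ factors uniquely as $\theta=\wh\theta\circ\io_{\wh T(E)}$ for a continuous homomorphism $\wh\theta\!:T^{\mathsf{C}}(E)\to B$. Then $\wh\psi:=\wh\theta$ satisfies $\wh\psi\circ\mu=\wh\theta\circ\io_{\wh T(E)}\circ\mu_{\wh T}=\theta\circ\mu_{\wh T}=\psi$.

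For uniqueness of $\wh\psi$, suppose $\wh\psi'\circ\mu=\psi$ as well. Then $\wh\psi'\circ\io_{\wh T(E)}$ is a continuous homomorphism $\wh T(E)\to B$ whose precomposition with $\mu_{\wh T}$ equals $\psi$, so by uniqueness in the universal property of $\wh T(E)$ we get $\wh\psi'\circ\io_{\wh T(E)}=\theta=\wh\psi\circ\io_{\wh T(E)}$; then uniqueness in the universal property of the envelope forces $\wh\psi'=\wh\psi$. This establishes that $(T^{\mathsf{C}}(E),\mu)$ is a left adjoint value, and uniqueness up to natural isomorphism is the standard consequence of uniqueness of adjoints. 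The only point requiring a little care — and the main thing to get right — is the first factorization step: that a continuous linear map from $E$ into a Banach algebra extends to $\wh T(E)$ and hence that an arbitrary continuous linear map $E\to B$ with $B$ a projective limit of $\mathsf C$-algebras extends to a continuous homomorphism $\wh T(E)\to B$; this is exactly the defining universal property of the analytic tensor algebra together with the fact that limits of Banach algebras in $\mathsf{TA}$ agree with limits in the category of Arens--Michael algebras, as recalled after Definition~\ref{loccond}.
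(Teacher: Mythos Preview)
Your proof is correct and follows essentially the same approach as the paper: construct $T^{\mathsf{C}}(E)$ as the $\mathsf{C}$-envelope of the analytic tensor algebra $\wh T(E)$, then combine the universal property of $\wh T(E)$ with that of the envelope. The paper's own proof is a terse two-line version of exactly this argument (it simply says ``it is easy to see that the resulting algebra satisfies the required universal property''), whereas you have written out the verification of existence and uniqueness of $\wh\psi$ in full detail.
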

\begin{proof}
Let $E$ be a complete locally convex space. Take the analytic tensor algebra $\widehat{T}(E)$ and apply the enveloping functor with respect to $\mathsf{C}$. By Proposition~\ref{exisenvgen}, the envelope exists and is unique up to natural isomorphism. It is easy to see that the resulting algebra satisfies the required universal property.

The uniqueness follows from the definitions.
\end{proof}

\begin{pr}\label{PGLisqu}
Every $\mathsf{CL}$ algebra is a quotient of some tensor $\mathsf{CL}$ algebra.
\end{pr}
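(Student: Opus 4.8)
The plan is to realize an arbitrary $\mathsf{CL}$ algebra $R$ as a quotient of the tensor $\mathsf{CL}$ algebra built on $R$ regarded merely as a complete locally convex space. First I would observe that, since $R$ is locally in $\mathsf C$, it is in particular an Arens--Michael algebra, hence a complete locally convex space; let $E$ denote its underlying complete locally convex space. By Proposition~\ref{exisCiten} the tensor $\mathsf{CL}$ algebra $T^{\mathsf C}(E)$ exists, together with its canonical continuous linear map $\mu\!:E\to T^{\mathsf C}(E)$. Now apply the universal property of $T^{\mathsf C}(E)$ to the continuous linear map $\id_E\!:E\to R$ (continuous because it is literally the identity on the underlying space, and $R$ is a $\mathsf{CL}$ algebra): this yields a unique continuous homomorphism $\pi\!:T^{\mathsf C}(E)\to R$ with $\pi\circ\mu=\id_E$.

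The remaining point is that $\pi$ is a quotient map of topological algebras, i.e. surjective and open onto $R$. Surjectivity is immediate: the image of $\pi$ contains $\mu(E)$, and $\pi(\mu(E))=E=R$ as a set; since $\pi$ is a homomorphism its image is a subalgebra, so $\Image\pi=R$. For the topological statement, I would argue that $\pi$ is in fact a topological quotient by exhibiting a continuous linear (not multiplicative) splitting: the map $\mu\!:R=E\to T^{\mathsf C}(E)$ satisfies $\pi\circ\mu=\id_R$ and is continuous, so $\pi$ is a retraction in the category of topological vector spaces, hence a topological quotient map onto $R$. Consequently $R\cong T^{\mathsf C}(E)/\Ker\pi$ as topological algebras, and since $\Ker\pi$ is a closed ideal, this is a quotient in the sense used in the paper.

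The main obstacle, such as it is, lies in the phrase "is a quotient": one must be careful about what category of quotients is meant, since $T^{\mathsf C}(E)$ is only locally in $\mathsf C$ and the kernel $\Ker\pi$ need not be such that the algebraic quotient $T^{\mathsf C}(E)/\Ker\pi$ is automatically again a $\mathsf{CL}$ algebra in general. Here, however, this difficulty does not arise, precisely because we already know the target $R$ is a $\mathsf{CL}$ algebra and $\pi$ is a topological quotient onto it; so the quotient topology on $T^{\mathsf C}(E)/\Ker\pi$ coincides with the (already $\mathsf{CL}$) topology of $R$ via the induced isomorphism. If one instead wanted the statement that \emph{every} closed-ideal quotient of a tensor $\mathsf{CL}$ algebra is again a $\mathsf{CL}$ algebra, that would require the stability of $\mathsf{C}$ under quotients; but the proposition as stated only asserts the converse inclusion of classes, and the construction above suffices.
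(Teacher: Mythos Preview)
Your argument is correct and follows essentially the same route as the paper: take $T^{\mathsf C}(R)$, apply its universal property to the identity map on $R$ to get a continuous homomorphism $\pi\!:T^{\mathsf C}(R)\to R$ with $\pi\mu=\id_R$, and use the continuous linear section $\mu$ to conclude that $\Ker\pi$ is a complemented closed ideal with $R\cong T^{\mathsf C}(R)/\Ker\pi$. (One small wording slip: in your surjectivity step you write ``the image of $\pi$ contains $\mu(E)$'', but you mean that the \emph{domain} contains $\mu(E)$ and hence the image contains $\pi(\mu(E))=R$.)
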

\begin{proof}
Let $A\in\mathsf{CL}$. Consider the continuous linear map $\mu\!:A\to T^{\mathsf{C}}(A)$ in Definition~\ref{Cinften} and a continuous homomorphism $\varphi\!:T^{\mathsf{C}}(A)\to A$ induced by the identity. Then $\varphi\mu=1$ and so $\Ker\varphi$ is a closed ideal complemented as a locally convex space. Thus $A\cong T^{\mathsf{C}}(A)/\Ker\varphi$.
\end{proof}

\subsection*{Free $\mathsf{CL}$ algebras}
We also consider a free functor for sets, which in fact is easily reduced to the tensor $\mathsf{CL}$ algebra functor.

\begin{df}\label{freePGL}
Let $X$ be a set. A \emph{free $\mathsf{CL}$ algebra} with generating set~$X$ is a $\mathsf{CL}$ algebra  ${\mathscr F}^{\mathsf{C}}\{X\}$ equipped with a map $\mu\!:X\to {\mathscr F}^{\mathsf{C}}\{X\}$ that satisfies the following universal property: for any $\mathsf{CL}$ algebra~$B$ and any
map $\psi\!: X \to B$ there is a unique continuous homomorphism
$\widehat\psi\!:{\mathscr F}^{\mathsf{C}}\{X\}\to B$ such that the diagram
\begin{equation*}
  \xymatrix{
X \ar[r]^{\mu}\ar[rd]_{\psi}&{\mathscr F}^{\mathsf{C}}\{X\}\ar@{-->}[d]^{\widehat\psi}\\
 &B\\
 }
\end{equation*}
is commutative.
\end{df}
\begin{pr}\label{exisfree}
The free $\mathsf{CL}$  algebra exists and is unique up to natural isomorphism for every set.
\end{pr}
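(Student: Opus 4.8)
The plan is to reduce the existence of the free $\mathsf{CL}$ algebra to the already-established existence of the tensor $\mathsf{CL}$ algebra (Proposition~\ref{exisCiten}). First I would observe that a set-indexed free object is governed by a composite adjunction: the forgetful functor $\mathsf{CL}\to\mathsf{Set}$ factors through the forgetful functor $\mathsf{CL}\to\mathsf{LCS}$ (complete locally convex spaces) followed by the forgetful functor $\mathsf{LCS}\to\mathsf{Set}$. A left adjoint to the latter is the functor $X\mapsto \ell^1(X)$ — or, if one prefers, any standard construction of the free complete locally convex space on a set; concretely, this is the completion of the free vector space $\CC^{(X)}$ (or $\R^{(X)}$) endowed with the strongest locally convex topology, which is already complete, so in fact the free complete locally convex space on $X$ is just $\CC^{(X)}$ with its finest locally convex topology, and $\mu\!:X\to \CC^{(X)}$ sends a point to the corresponding basis vector. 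Its universal property is immediate: any map from $X$ into a locally convex space extends uniquely to a linear map on $\CC^{(X)}$, and this linear map is automatically continuous for the strongest locally convex topology.

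Next I would set $\cF^{\mathsf{C}}\{X\}:=T^{\mathsf{C}}\bigl(\CC^{(X)}\bigr)$ (replacing $\CC$ by $\R$ in the real case), with structural map $\mu$ given by the composite $X\to \CC^{(X)}\xra{} T^{\mathsf{C}}(\CC^{(X)})$, the second arrow being the identity component of the tensor $\mathsf{CL}$ algebra. Verifying the universal property in Definition~\ref{freePGL} is then a routine two-step unwinding: given a $\mathsf{CL}$ algebra $B$ and a map $\psi\!:X\to B$, first extend $\psi$ uniquely to a continuous linear map $\tilde\psi\!:\CC^{(X)}\to B$ using the universal property of the free locally convex space, then extend $\tilde\psi$ uniquely to a continuous homomorphism $\widehat\psi\!:T^{\mathsf{C}}(\CC^{(X)})\to B$ using the universal property of the tensor $\mathsf{CL}$ algebra from Definition~\ref{Cinften}. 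Commutativity of the required triangle and uniqueness of $\widehat\psi$ both follow by chasing the two universal properties in sequence; in categorical language this is just the statement that a composite of left adjoints is a left adjoint. Uniqueness of $\cF^{\mathsf{C}}\{X\}$ up to natural isomorphism is the standard Yoneda-type argument for objects representing the same functor.

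There is no substantial obstacle here; the only point requiring a moment of care is the choice of the ``free'' functor $\mathsf{Set}\to\mathsf{LCS}$. One must make sure the target lands in \emph{complete} locally convex spaces (since Definition~\ref{Cinften} takes $E$ complete), which is why I take $\CC^{(X)}$ with the strongest locally convex topology — a direct sum of copies of the ground field, hence complete — rather than, say, a space needing a completion. With that fixed, the proof is the one-line composition of adjoints described above. I would also remark, as the paper does informally, that this mirrors exactly the classical reduction of the free associative algebra on a set to the tensor algebra on the free vector space, so the statement is expected and its proof mechanical.

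\begin{proof}
The free complete locally convex space on a set $X$ exists: it is $\Bbbk^{(X)}$ (with $\Bbbk=\CC$ or $\R$) equipped with the strongest locally convex topology, which is complete, together with the map $X\to\Bbbk^{(X)}$ sending each element to the corresponding basis vector; indeed any map from $X$ to a locally convex space $E$ extends uniquely to a linear map $\Bbbk^{(X)}\to E$, and this map is automatically continuous. Thus the forgetful functor from complete locally convex spaces to sets has a left adjoint. Composing it with the left adjoint of the forgetful functor $\mathsf{CL}\to\mathsf{LCS}$ supplied by Proposition~\ref{exisCiten}, we obtain a left adjoint to the forgetful functor $\mathsf{CL}\to\mathsf{Set}$. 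Explicitly, set $\cF^{\mathsf{C}}\{X\}:=T^{\mathsf{C}}\bigl(\Bbbk^{(X)}\bigr)$ with $\mu\!:X\to\cF^{\mathsf{C}}\{X\}$ the composite of $X\to\Bbbk^{(X)}$ with the structural map of Definition~\ref{Cinften}. Given a $\mathsf{CL}$ algebra $B$ and a map $\psi\!:X\to B$, extend $\psi$ uniquely to a continuous linear map $\tilde\psi\!:\Bbbk^{(X)}\to B$, then extend $\tilde\psi$ uniquely to a continuous homomorphism $\widehat\psi\!:T^{\mathsf{C}}(\Bbbk^{(X)})\to B$; the diagram in Definition~\ref{freePGL} commutes and $\widehat\psi$ is unique with this property, since any such homomorphism restricts to a continuous linear map $\Bbbk^{(X)}\to B$ agreeing with $\tilde\psi$ and hence equals $\widehat\psi$. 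Uniqueness of $\cF^{\mathsf{C}}\{X\}$ up to natural isomorphism is the usual consequence of its universal property.
\end{proof}
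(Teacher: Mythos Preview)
Your proof is correct and follows essentially the same approach as the paper: both take the composition of the free locally convex space functor (sending $X$ to the vector space with basis $X$ equipped with the strongest locally convex topology) with the tensor $\mathsf{CL}$ algebra functor from Proposition~\ref{exisCiten}. You simply give more detail in verifying the universal property, while the paper states the composition in one sentence.
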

\begin{proof}
It suffices to take the composition of two free functors, free locally convex space functor  and  the tensor $\mathsf{CL}$ algebra functor. The first functor maps every set~$X$ to the vector space having basis of cardinality equal to that of~$X$ and endowed with the strongest locally convex topology and the second is constructed above.
\end{proof}

\section{Banach algebras of polynomial growth and $\mathsf{PGL}$ algebras}
\label{s:BanPG}

\subsection*{Definitions and main properties}
We formulate our main definition in the form that can be applied to both unital and non-unital algebras.
For a real Banach algebra $B$  that does not necessarily have a unit, we use the notation $B_+$ for the algebra with adjoint identity. We also denote the norms on $B$ and $B_+$ by $\|\cdot\|$ and $\|\cdot\|_+$, respectively, and use the same notation for the extensions of these norms to the complexifications.

\begin{lm}\label{PGgendf}
Let $b\in B$. Then the following two conditions are equivalent.

\emph{(1)}~There are $K>0$ and $\alpha\geqslant0$ such that
\begin{equation*}
\|e^{isb}-1\|\leqslant K (1+|s|)^{\alpha} \qquad \text{for all $s\in
\mathbb{R}$.}
\end{equation*}

\emph{(2)}~There are $K>0$ and $\alpha\geqslant0$ such that
\begin{equation*}
\|e^{isb}\|_+\leqslant K (1+|s|)^{\alpha} \qquad \text{for all $s\in
\mathbb{R}$.}
\end{equation*}

If, in addition $B$ is unital, then these conditions hold if and only if
there are $K>0$ and $\alpha\geqslant0$ such that
\begin{equation}\label{eisxs0}
\|e^{isb}\|\leqslant K (1+|s|)^{\alpha} \qquad \text{for all $s\in
\mathbb{R}$.}
\end{equation}
\end{lm}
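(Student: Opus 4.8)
The plan is to prove the chain of equivalences $(2)\Leftrightarrow(1)$ in general and, in the unital case, $(2)\Leftrightarrow\eqref{eisxs0}$. The key elementary fact is that for $s\in\R$ and a Banach-algebra element $b$, the elements $e^{isb}$ (computed in $B_+$ if $B$ has no unit) differ from $1$ by an element of $B$, namely $e^{isb}-1\in B$; this is immediate from the power-series $e^{isb}-1=\sum_{n\ge1}(isb)^n/n!$, all of whose terms lie in the (closed) ideal $B\subseteq B_+$. So the quantity $\|e^{isb}-1\|$ in condition (1) is well defined.

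For $(1)\Rightarrow(2)$: from $e^{isb}=1+(e^{isb}-1)$ and the triangle inequality in $B_+$ we get $\|e^{isb}\|_+\le 1+\|1\|_+\cdot\text{(something)}$; more precisely $\|e^{isb}\|_+\le \|1\|_++\|e^{isb}-1\|_+$. Here one must be slightly careful: condition (1) bounds the $B$-norm $\|e^{isb}-1\|$, whereas we need the $B_+$-norm. But the inclusion $B\hookrightarrow B_+$ is contractive (indeed $\|x\|_+\le\|x\|$ for $x\in B$, with the standard norm $\|(x,\lambda)\|_+=\|x\|+|\lambda|$, or at worst is bounded by a fixed constant for an equivalent norm), so $\|e^{isb}-1\|_+\le\|e^{isb}-1\|\le K(1+|s|)^\al$, and hence $\|e^{isb}\|_+\le \|1\|_++K(1+|s|)^\al\le K'(1+|s|)^\al$ with $K'=\|1\|_++K$. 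For $(2)\Rightarrow(1)$: write $e^{isb}-1=(e^{isb})-1$ in $B_+$, so $\|e^{isb}-1\|_+\le\|e^{isb}\|_++\|1\|_+\le K(1+|s|)^\al+\|1\|_+\le K''(1+|s|)^\al$. The issue here is the reverse direction: we have bounded $\|e^{isb}-1\|_+$ but condition (1) asks for $\|e^{isb}-1\|$. This is fine because $e^{isb}-1\in B$ and the projection $B_+\to B$ (killing the identity component, i.e. $(x,\lambda)\mapsto x$) is continuous, so $\|y\|\le C\|y\|_+$ for $y\in B$; with the standard norm $C=1$. Thus $\|e^{isb}-1\|\le\|e^{isb}-1\|_+\le K''(1+|s|)^\al$, giving (1).

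For the unital case: if $B$ is unital, then $B_+$ is (naturally identified with) $B\oplus\R$ as an algebra, but the point is that $B$ already contains an identity $1_B$, and the map $B_+\to B$, $(x,\lambda)\mapsto x+\lambda 1_B$, is a surjective homomorphism that restricts to the identity on $B$ and is bounded; moreover the inclusion $B\to B_+$, $x\mapsto(x,0)$, is a section. Since $e^{isb}$ is already an element of $B$ (the exponential series converges in $B$), and its image in $B_+$ under the section is $(e^{isb},0)$, and conversely, the two norms $\|e^{isb}\|$ (in $B$) and $\|e^{isb}\|_+$ (in $B_+$) are comparable up to fixed multiplicative constants independent of $s$. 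Hence $\eqref{eisxs0}$ holds with one pair $(K,\al)$ iff (2) holds with another pair. Equivalently: one deduces $\eqref{eisxs0}\Leftrightarrow(2)$ directly, and combining with the already-proved $(1)\Leftrightarrow(2)$ closes the loop.

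The main obstacle, such as it is, is purely bookkeeping: keeping straight which norm ($\|\cdot\|$ on $B$ versus $\|\cdot\|_+$ on $B_+$) appears in each inequality and verifying that the four structure maps involved — the inclusion $B\hookrightarrow B_+$, the quotient/augmentation $B_+\to\R$, the complementary projection $B_+\to B$, and (in the unital case) the multiplication map $B_+\to B$ using the existing unit — are all bounded with constants independent of $s$. Once the norm conversions are pinned down, every implication is a one-line triangle-inequality estimate, and the powers $(1+|s|)^\al$ are simply carried along unchanged (only the constant $K$ gets adjusted additively). I would present the argument as: first record $e^{isb}-1\in B$ and fix the contractivity constants of the four maps; then prove $(1)\Rightarrow(2)$, then $(2)\Rightarrow(1)$; then in the unital case prove $(2)\Leftrightarrow\eqref{eisxs0}$ via the comparability of norms on $e^{isb}$. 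No deeper input (spectral theory, functional calculus) is needed for this particular lemma — that machinery enters only later, when relating these conditions to the existence of a $C^\infty$-functional calculus.
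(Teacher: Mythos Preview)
Your proposal is correct and is precisely the routine triangle-inequality bookkeeping the paper has in mind (the paper itself omits the proof, noting only that it is straightforward). One small wrinkle in your unital-case discussion: the element $e^{isb}$ appearing in condition~(2) is the exponential computed \emph{in $B_+$}, namely $(e^{isb}_B-1_B,\,1)$, not the image $(e^{isb}_B,0)$ of the $B$-exponential under your section; but since these differ by a fixed element the comparability claim survives, and in any case the direct route $(1)\Leftrightarrow\eqref{eisxs0}$ via $\bigl|\,\|e^{isb}\|-\|e^{isb}-1_B\|\,\bigr|\le\|1_B\|$ that you allude to at the end is cleanest.
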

The proof is straightforward. Note also that the exponent $\alpha$ can be taken the same in all three cases but the constant $K$ can be different.

\begin{df}\label{locPG}

(A)~An element $b$ of a real Banach algebra is of \emph{polynomial growth} if it satisfies the (equivalent) Conditions~(1) and~(2) of Lemma~\ref{PGgendf} (or~\eqref{eisxs0} in the unital case).

(B)~A real Banach algebra  is of \emph{polynomial growth} if all its elements are of polynomial growth.
\end{df}

For unital algebras, the assumption in the preceding definition is a special case of that in~\cite[Definition~2.6]{ArOld}. Note also that  the term `real generalized scalar operator' is sometimes used for an operator satisfying~\eqref{eisxs0}.

\begin{exm}\label{lpPG}
Let $p\in[1,\infty)$. We claim that the real Banach algebra
$\ell_p$ (with pointwise multiplication) is of polynomial growth. It suffices to check that $s\mapsto\|e^{isb}-1\|$ has polynomial growth for every $b\in \ell_p$.  Take $b=(x_n)\in\ell_p$. Since $e^{isb}-1=(e^{isx_n}-1)$, we have
$$
\|e^{isb}-1\|=\left(\sum_{n\in \mathbb{N}} |e^{isx_n}-1|^p\right)^{1/p}.
$$
Note that there is $C>0$ such that
$|e^{iy}-1|\leqslant C|y|$ for every $y\in\mathbb{R}$. Hence,
\begin{equation*}
\|e^{isb}-1\|\leqslant \left(\sum_n C^p|sx_n|^p\right)^{1/p}\leqslant C|s|\,\|b\|\qquad(s\in\mathbb{R}),
\end{equation*}
as desired.
\end{exm}

In what follows we consider mainly unital algebras.

The following structural result is useful. Here we denote by $\Rad B$ the Jacobson radical of an associative algebra~$B$.

\begin{thm}\label{RadPG}
\cite[Theorem~2.8 and  Proposition~2.9]{ArOld}
Let $B$ be a unital Banach algebra of polynomial growth. Then $B/\Rad B$ is commutative and $\Rad B$ is nilpotent.
\end{thm}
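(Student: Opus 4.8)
The plan is to establish the two assertions separately, in each case first replacing $B$ by a more tractable algebra.

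\smallskip
\noindent\emph{Real spectrum.} The basic observation is that every element $b$ of a Banach algebra of polynomial growth has real spectrum (in the complexification): if $\lambda\in\sigma(b)$, then $e^{is\lambda}\in\sigma(e^{isb})$ by the spectral mapping theorem, so $e^{-s\,\Im\lambda}=|e^{is\lambda}|\le\|e^{isb}\|\le K(1+|s|)^{\al}$ for all $s\in\R$; letting $s\to+\infty$ and $s\to-\infty$ forces $\Im\lambda=0$.

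\smallskip
\noindent\emph{Commutativity of $B/\Rad B$.} The quotient $\bar B:=B/\Rad B$ is semisimple, and it is again of polynomial growth because the quotient homomorphism is contractive, so $\|e^{is\bar b}\|\le\|e^{isb}\|$ for any lift $b$. If $[\bar a,\bar b]\ne0$ in $\bar B$, some irreducible representation $\pi$ of $\bar B$ does not annihilate it, and $\pi(\bar B)$ is a primitive real Banach algebra that is still of polynomial growth; it therefore suffices to show that such an algebra $A$ is commutative, equivalently (as announced in the introduction) that all its irreducible representations are one-dimensional. Present $A$ as a faithful irreducible algebra of operators on a real Banach space $X$, put $\Delta=\End_A(X)$ (a real Banach division algebra, hence $\R$, $\CC$ or $\mathbb{H}$ by the real Gelfand--Mazur theorem), and invoke the Jacobson density theorem. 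One then argues that $\Delta\ne\R$, or $\dim_\Delta X\ge2$, would allow one to produce an element of $A$ whose spectrum contains a non-real point, contradicting the previous paragraph; hence $\Delta=\R$ and $\dim_\Delta X=1$, i.e.\ $A\cong\R$. (Alternatively one may try to deduce commutativity of $\bar B$ from the criterion that a Banach algebra is commutative modulo radical as soon as the spectral radius is subadditive, and verify $\rho(x+y)\le\rho(x)+\rho(y)$ from the reality of spectra together with the polynomial growth bounds.)

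\smallskip
\noindent\emph{Nilpotency of $\Rad B$.} Let $b\in\Rad B$, so $\sigma(b)=\{0\}$. Since $b$ is of polynomial growth it admits a $C^\infty$-functional calculus, a continuous unital homomorphism $\Phi_b\colon C^\infty(\R)\to B$ with $\Phi_b(x)=b$, and the associated $B$-valued distribution has support $\sigma(b)=\{0\}$; a distribution with one-point support has finite order, say $\le m$, so $\Phi_b(f)=\sum_{j=0}^m c_j f^{(j)}(0)$ for suitable $c_j\in B$. Testing against $f(x)=x^{m+1}$ gives $b^{m+1}=0$, so every element of $\Rad B$ is nilpotent. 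Now $\Rad B$ is a closed ideal, hence a Banach space, and it is the countable union of the closed sets $\{b\in\Rad B:b^N=0\}$; by the Baire category theorem one of them contains a ball $\{b_0+c:c\in\Rad B,\ \|c\|<\ep\}$, and then $(b_0+tb)^N=0$ for every $b\in\Rad B$ and all small $|t|$, whose coefficient of $t^N$ is $b^N$; thus there is a single $N$ with $b^N=0$ for all $b\in\Rad B$. Finally the Nagata--Higman theorem (applicable since the ground field $\R$ has characteristic zero) shows that an algebra of bounded nil index is nilpotent, so $\Rad B$ is nilpotent.

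\smallskip
\noindent The routine ingredients are the real-spectrum computation, the reduction of commutativity to the primitive case, and the Baire-plus-Nagata--Higman argument in the last step. The main obstacle is the primitive case of the commutativity statement: showing that a \emph{primitive} real Banach algebra of polynomial growth must be $\R$. Excluding $\Delta\ne\R$ and the infinite-dimensional situation requires genuinely exploiting polynomial growth (via the functional calculus of individual elements and/or a density argument) rather than merely the reality of individual spectra, because Jacobson density provides only approximate control of the operators representing $A$.
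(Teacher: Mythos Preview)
The paper does not prove this theorem; it merely quotes it from \cite{ArOld}. So there is no proof in the present paper to compare against, and your proposal must be judged on its own merits.

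Your argument is in fact essentially complete, and your self-diagnosis at the end is too pessimistic. The reduction to the primitive case and the nilpotency argument (individual nilpotency via the support of the $C^\infty$-calculus, then Baire, then Nagata--Higman) are both correct. The primitive case can be finished using \emph{only} the reality of spectra, not the full polynomial-growth hypothesis, as follows. Let $A$ be primitive, acting algebraically irreducibly and faithfully on a real Banach space $X$ with commutant $\Delta\in\{\R,\CC,\mathbb{H}\}$. If $\Delta\ne\R$, pick $x\ne0$ and $\iota\in\Delta$ with $\iota^2=-1$; by irreducibility there is $a\in A$ with $ax=\iota x$, and since $a$ commutes with $\Delta$ one gets $a^2x=\iota(ax)=\iota^2x=-x$, so $(a^2+1)x=0$, whence $a^2+1$ is not invertible in $A$ (if it were, apply the inverse to both sides). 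Thus $-1\in\sigma(a^2)$, contradicting $\sigma(a)\subset\R$ and the spectral mapping theorem. If $\Delta=\R$ and $\dim_\R X\ge2$, Jacobson density gives $a\in A$ acting on a chosen two-dimensional subspace as a rotation by $\pi/2$; the same computation yields $(a^2+1)x_1=0$ and the same contradiction. Hence $\Delta=\R$ and $\dim_\R X=1$, so $A\cong\R$.

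Two minor cleanups: replace ``$\pi(\bar B)$ is a primitive real Banach algebra'' by the quotient $\bar B/\ker\pi$ (the image need not be complete), and in the Baire step note explicitly that $t\mapsto(b_0+tb)^N$ is a $B$-valued polynomial in $t$, so vanishing on an interval forces all coefficients to vanish; the top coefficient is indeed $b^N$ regardless of commutativity.
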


\subsection*{$\mathsf{PGL}$ algebras}

In this section, we consider $\mathsf{PGL}$ algebras, that is, $\mathsf{CL}$ algebras in the case when $\mathsf{C}$ is the class of Banach algebras of polynomial growth.

\begin{df}
We denote by $\mathsf{PG}$ the class of real Banach algebras of polynomial growth and by $\mathsf{PGL}$
the class of algebras that are locally in $\mathsf{PG}$. Sometimes instead of `is locally in $\mathsf{PG}$' we say `\emph{is a $\mathsf{PGL}$ algebra}'.
\end{df}

Some results on algebras in this class were obtained in the author's papers~\cite{ArOld,Ar22}.
The following proposition is a reformulation of Proposition 2.11 in~\cite{ArOld}.

\begin{pr}\label{subPG}
An Arens--Michael $\mathbb{R}$-algebra is a $\mathsf{PGL}$ algebra if and only if it is isomorphic to a closed subalgebra of a product of Banach algebras of polynomial growth.
\end{pr}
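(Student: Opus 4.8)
The plan is to prove the equivalence by showing both inclusions, treating the non-trivial direction via the standard fact that a projective limit embeds into the product of the system. First I would observe that the ``only if'' direction is essentially the content of Definition~\ref{loccond} combined with a general nonsense fact about projective limits: if $A$ is a $\mathsf{PGL}$ algebra, then by definition $A\cong\varprojlim_\la B_\la$ in $\mathsf{TA}$ for some projective system $\{B_\la\}$ of Banach algebras of polynomial growth. The canonical map $\varprojlim_\la B_\la\to\prod_\la B_\la$ is a continuous injective homomorphism, and its image is precisely the closed subalgebra consisting of compatible families $(b_\la)$ with $\pi_{\la\mu}(b_\mu)=b_\la$ for all $\la\le\mu$; closedness follows because each compatibility condition $b_\la=\pi_{\la\mu}(b_\mu)$ is a closed condition (equalizer of two continuous maps into the Hausdorff space $B_\la$). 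So $A$ is (isomorphic as a topological algebra to) a closed subalgebra of a product of algebras in $\mathsf{PG}$. One must also note that such a product is an Arens--Michael algebra, as is any closed subalgebra, so there is no clash with the standing hypothesis.

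For the ``if'' direction, suppose $A$ is an Arens--Michael $\R$-algebra isomorphic to a closed subalgebra $A'\subseteq\prod_{\la\in\La} B_\la$ with each $B_\la\in\mathsf{PG}$. I would then exhibit $A'$ explicitly as a projective limit in $\mathsf{TA}$ of Banach algebras of polynomial growth, so that it falls under Definition~\ref{loccond}. The natural candidate system is indexed by the finite subsets $S\subseteq\La$: for each such $S$ put $A_S$ equal to the closure of the image of $A'$ under the projection $\prod_\la B_\la\to\prod_{\la\in S}B_\la$. Each $\prod_{\la\in S}B_\la$ is a finite product of algebras of polynomial growth, hence itself of polynomial growth (the $C^\infty$-functional-calculus / polynomial-growth estimate is inherited by finite products, since $\|e^{is(b_\la)_{\la\in S}}\|=\max_\la\|e^{isb_\la}\|$ for the sup norm), and a closed subalgebra of a Banach algebra of polynomial growth is again of polynomial growth because the estimate $\|e^{isb}\|_+\le K(1+|s|)^\al$ only refers to elements of the subalgebra. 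Thus each $A_S\in\mathsf{PG}$. The inclusions $S\subseteq S'$ induce linking homomorphisms $A_{S'}\to A_S$ with dense image, and the compatibility is clear, so $\{A_S\}$ is a projective system in $\mathsf{PG}$; it remains to identify $A'$ with $\varprojlim_S A_S$.

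The main obstacle is exactly this last identification: showing that the canonical map $A'\to\varprojlim_S A_S$ is a topological isomorphism, not merely a continuous bijection. Injectivity is easy because a family in $\prod_\la B_\la$ is determined by its finite ``marginals'' and the map already separates points of $A'$. For surjectivity one takes a compatible family $(a_S)\in\varprojlim_S A_S$; the compatibility forces the coordinates to glue to a well-defined element $a=(a_\la)_{\la\in\La}\in\prod_\la B_\la$, and one must check $a\in A'$, which is where closedness of $A'$ (and the fact that $a$ lies in the closure of $A'$ with respect to the product topology) is used. The topological statement --- that the subspace topology on $A'$ from the product coincides with the projective-limit topology from the $A_S$ --- follows because both are generated by the seminorms pulled back from the finite sub-products, so the two are literally the same locally convex topology. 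Here I would lean on the remark already made in the excerpt (after Definition~\ref{loccond}, citing \cite[Chapter~3, \S\,2]{Ma86}) that the projective limit of a system of Banach algebras in $\mathsf{TA}$ agrees with the limit in the category of Arens--Michael algebras, so that no completeness subtleties intervene beyond what the Arens--Michael hypothesis on $A$ already grants. Finally, since Proposition~2.11 of \cite{ArOld} is cited as the source, I would cross-check that the argument above is just the unwinding of that reference in the present terminology, and if a cleaner route is available there I would simply invoke it for the harder inclusion.
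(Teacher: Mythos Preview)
Your argument is correct and follows the standard route: realize a projective limit as a closed subalgebra of the product for the ``only if'' direction, and for the converse build the projective system over finite index sets using closures of projections, invoking the stability of $\mathsf{PG}$ under finite products and closed subalgebras. The surjectivity step (showing the glued element lies in $A'$) is the only place requiring care, and your use of the closedness of $A'$ together with the fact that basic neighborhoods in the product topology are controlled by finitely many coordinates handles it properly.

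Note, however, that the paper does not actually prove this proposition: it simply records it as a reformulation of \cite[Proposition~2.11]{ArOld} and moves on. So your detailed argument supplies what the paper outsources to the citation; there is no alternative approach in the paper to compare against.
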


The following result and especially its corollary are important in subsequent sections.

\begin{thm}\label{SubamaniT}
Every closed subalgebra of a product of  $\mathsf{PGL}$ algebras is a $\mathsf{PGL}$ algebra.
\end{thm}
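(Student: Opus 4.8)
The plan is to reduce the statement about a closed subalgebra of a product of $\mathsf{PGL}$ algebras to the already-known Proposition~\ref{subPG}, which characterises $\mathsf{PGL}$ algebras among Arens--Michael algebras as precisely the closed subalgebras of products of Banach algebras of polynomial growth. The key point is that ``closed subalgebra of a product'' is a transitive relation in the relevant sense: if each factor $A_\lambda$ is itself (by Proposition~\ref{subPG}) a closed subalgebra of a product $\prod_\mu B_{\lambda,\mu}$ of $\mathsf{PG}$-algebras, then $\prod_\lambda A_\lambda$ is a closed subalgebra of $\prod_\lambda\prod_\mu B_{\lambda,\mu}=\prod_{(\lambda,\mu)} B_{\lambda,\mu}$, and a closed subalgebra of a closed subalgebra is again a closed subalgebra. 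So the whole argument is essentially a diagram chase with closures of images.

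First I would observe that any product of Arens--Michael algebras is Arens--Michael, and a closed subalgebra of an Arens--Michael algebra is Arens--Michael; hence the subalgebra $C$ in question is automatically Arens--Michael, which is the hypothesis needed to invoke Proposition~\ref{subPG}. Next, given the family $\{A_\lambda\}$ of $\mathsf{PGL}$ algebras, apply Proposition~\ref{subPG} to each: fix for every $\lambda$ a closed embedding $A_\lambda\hookrightarrow \prod_{\mu\in M_\lambda} B_{\lambda,\mu}$ with each $B_{\lambda,\mu}\in\mathsf{PG}$. These assemble into a closed embedding $\prod_\lambda A_\lambda\hookrightarrow \prod_\lambda\prod_{\mu}B_{\lambda,\mu}$, the target being a product of $\mathsf{PG}$-algebras indexed by the disjoint union $\coprod_\lambda M_\lambda$. (That the assembled map is again a topological embedding onto a closed subalgebra follows because the product topology is the initial topology for the coordinate projections and closedness can be checked coordinatewise, or simply because the product of closed embeddings of complete spaces is a closed embedding.) Composing with the given closed embedding $C\hookrightarrow\prod_\lambda A_\lambda$, we exhibit $C$ as a closed subalgebra of a product of Banach algebras of polynomial growth, and Proposition~\ref{subPG} then yields that $C\in\mathsf{PGL}$.

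The main technical obstacle, such as it is, lies in the claim that composing closed topological embeddings yields a closed topological embedding, and that the ``diagonal'' assembly $\prod_\lambda A_\lambda\to\prod_\lambda\prod_\mu B_{\lambda,\mu}$ is one. The embedding property is immediate from the universal property of products; the point that needs a line of justification is closedness of the image. Here I would use that each $A_\lambda$ is complete (being Arens--Michael, hence a projective limit of Banach algebras) and that a complete subspace of a complete Hausdorff topological vector space is closed; alternatively, one argues directly that if a net in $\prod_\lambda A_\lambda$ converges in $\prod_{(\lambda,\mu)}B_{\lambda,\mu}$, its limit has each $\lambda$-block in the closed subspace $A_\lambda$, hence lies in $\prod_\lambda A_\lambda$. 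With that, the transitivity of the relation ``is a closed subalgebra of a product of $\mathsf{PG}$-algebras'' is established, and the theorem follows. I do not expect any genuinely hard step; the content is entirely in Proposition~\ref{subPG}, and this theorem is the natural closure-stability corollary.
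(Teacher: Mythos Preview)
Your proof is correct and follows essentially the same route as the paper: apply Proposition~\ref{subPG} to each factor $A_\lambda$ to embed it as a closed subalgebra of a product of $\mathsf{PG}$-algebras, assemble these into a closed embedding of $\prod_\lambda A_\lambda$ into a big product of $\mathsf{PG}$-algebras, and then apply Proposition~\ref{subPG} in the reverse direction to the composite closed embedding of~$C$. The only cosmetic difference is that the paper defers the ``product of closed embeddings is a closed embedding'' step to Lemma~\ref{auxtopin}, whereas you supply a direct completeness/coordinatewise argument.
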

\begin{proof}
Suppose that $A$ is a closed subalgebra of a product of a family $(A_i)$ of $\mathsf{PGL}$ algebras. It follows from Proposition~\ref{subPG} that each $A_i$ is isomorphic to a closed subalgebra of a product of algebras in $\mathsf{PG}$. It is easy to see (cf. Lemma~\ref{auxtopin} below) that $\prod A_i$ is also isomorphic to a closed subalgebra of a product of algebras in $\mathsf{PG}$. Applying  Proposition~\ref{subPG} in the reverse direction, we conclude
that $A$ is a $\mathsf{PGL}$ algebra.
\end{proof}

For $p\in\mathbb{N}$ denote by ${\mathrm T}_p$ the algebra of upper triangular (including the diagonal) real matrices of order~$p$. When $M$ is a Hausdorff smooth manifold with countable base, we denote by
$C^\infty(M,{\mathrm T}_{p})$ the set of  ${\mathrm T}_{p}$-valued function of class $C^\infty$. It is not hard to see that $C^\infty(M,{\mathrm T}_{p})$, endowed with  the topology of uniform convergence on compact subsets, is an Arens--Michael algebra.
Note that $C^\infty(M,{\mathrm T}_{p})$ can be identified with ${\mathrm T}_{p}(C^\infty(M))$, the algebra of upper triangular  real matrices with entries in ${\mathrm T}_{p}$.

\begin{co}\label{maniTpPGL}
Every closed subalgebra of a product of algebras of the form $C^\infty(M,{\mathrm T}_{p})$, where $M$ is a Hausdorff smooth manifold with countable base and $p\in\mathbb{N}$, is a $\mathsf{PGL}$ algebra.
\end{co}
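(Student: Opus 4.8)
The plan is to deduce the corollary directly from Theorem~\ref{SubamaniT} (closed subalgebras of products of $\mathsf{PGL}$ algebras are $\mathsf{PGL}$), so that essentially everything reduces to showing that a single algebra $C^\infty(M,\rT_p)$ is a $\mathsf{PGL}$ algebra. Granting this, if $A$ is a closed subalgebra of a product $\prod_i C^\infty(M_i,\rT_{p_i})$, then it is a closed subalgebra of a product of $\mathsf{PGL}$ algebras, and Theorem~\ref{SubamaniT} finishes the argument. So the whole content is: \emph{$C^\infty(M,\rT_p)$ is locally in $\mathsf{PG}$.}

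To see that $C^\infty(M,\rT_p)$ is a $\mathsf{PGL}$ algebra, first note (as remarked before the corollary) that it is an Arens--Michael algebra whose topology is the topology of uniform convergence on compact subsets, hence is defined by the seminorms $\|f\|_{n,K} = \max_{|\beta|\le n}\sup_{x\in K}\|\partial^\beta f(x)\|$ where $K$ ranges over compact subsets of $M$ (in local charts) and $n$ over $\N$, the matrix norm being any fixed submultiplicative norm on $\rT_p$. Each such seminorm is submultiplicative up to the usual Leibniz constant, and after the standard renormalization trick one gets an honest family of submultiplicative seminorms defining the same topology; the associated completed quotient Banach algebras $B_{n,K}$ are (closed subalgebras of) the Banach algebras $C^n(K,\rT_p)$ of $n$-times continuously differentiable $\rT_p$-valued functions on a compact set. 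By Proposition~\ref{subPG} it then suffices to exhibit $C^\infty(M,\rT_p)$ as a closed subalgebra of a product of algebras of polynomial growth, and for that I would show that each $C^n(K,\rT_p)$ — or rather a suitable Banach algebra surjecting onto it — belongs to $\mathsf{PG}$.

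The key computation is the polynomial-growth estimate for $\|e^{isf}\|$ in $C^n(K,\rT_p)$. Write $f = d + \nu$ where $d$ is the diagonal part and $\nu$ the strictly-upper-triangular (hence nilpotent, $\nu^p = 0$) part; more precisely work pointwise. For a fixed $x$, $e^{isf(x)}$ can be estimated using that the diagonal entries are purely imaginary in the exponent only after multiplication by $i$ — here the crucial point is that $C^\infty(M,\rT_p)\cong \rT_p(C^\infty(M))$, so $e^{isf(x)}$ is an upper-triangular matrix whose diagonal entries are $e^{is f_{jj}(x)}$, of modulus $1$, and whose off-diagonal entries are, by the Duhamel/Volterra expansion, finite sums of iterated integrals of products of the $f_{jk}$ against exponentials $e^{is(\cdots)}$; each such term is a polynomial in $s$ of degree $\le p-1$ with coefficients bounded in terms of $\sup_K\|f\|$. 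Differentiating in $x$ up to order $n$ brings down at most $n$ extra factors of $is$ times derivatives of the $f_{jk}$, multiplying the bound by a further polynomial factor of degree $\le n$. Hence $\|e^{isf}\|_{n,K}\le K'(1+|s|)^{\alpha}$ with $\alpha \le p-1+n$, which is exactly Condition~(1)/\eqref{eisxs0} of Lemma~\ref{PGgendf} and Definition~\ref{locPG}. Thus each relevant Banach algebra is of polynomial growth, $C^\infty(M,\rT_p)$ is a $\mathsf{PGL}$ algebra, and the corollary follows.

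The main obstacle I expect is the bookkeeping in the last paragraph: making the Duhamel expansion precise in the Banach algebra $C^n(K,\rT_p)$ (as opposed to doing it pointwise), keeping track of how many powers of $s$ appear after the $n$-fold differentiation, and confirming that the exponent $\alpha$ can be chosen uniformly over $s$ (it can — $\alpha$ depends only on $p$ and $n$, not on the element, which is all Definition~\ref{locPG} requires per element). A cleaner route that sidesteps explicit integrals is to observe that $\rT_p(C^n(K))$ has Jacobson radical the strictly-upper-triangular matrices, which is nilpotent of index $\le p$, with commutative quotient $C^n(K)^{\,p}$; one then invokes the fact that $C^n(K)$ itself is of polynomial growth together with a general lemma that a unital Banach algebra which is commutative-of-polynomial-growth modulo a nilpotent radical is again of polynomial growth — a statement that is essentially the converse direction of Theorem~\ref{RadPG} and is surely available from \cite{ArOld}. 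Either way the estimate is the heart of the matter; the reduction to it via Theorem~\ref{SubamaniT} and Proposition~\ref{subPG} is routine.
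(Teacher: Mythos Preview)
Your overall reduction is exactly the paper's: invoke Theorem~\ref{SubamaniT} to reduce to showing that a single $C^\infty(M,\rT_p)$ is a $\mathsf{PGL}$ algebra. From that point on the paper proceeds differently. It first uses the gluing property to embed $C^\infty(M,\rT_p)$ as a closed subalgebra of a product of algebras $C^\infty(V,\rT_p)$ with $V$ open in $\R^m$, and then simply cites \cite[Theorem~2.12]{ArOld} for the fact that each $C^\infty(V,\rT_p)$ is locally in $\mathsf{PG}$; a second application of Theorem~\ref{SubamaniT} finishes. So the paper never writes down the exponential estimate---it is entirely outsourced to~\cite{ArOld}.

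Your Duhamel sketch is, in outline, a correct direct proof of what \cite[Theorem~2.12]{ArOld} provides: the expansion of $e^{is(d+\nu)}$ terminates after $p-1$ iterations because diagonal exponentials preserve the strictly-upper-triangular ideal, the $k$th term is an integral over a simplex of volume $|s|^k/k!$ of a product of unitary diagonal factors and $k$ copies of $\nu$, and $n$ derivatives in $x$ bring down at most $n$ extra factors of $|s|$; this gives $\alpha\le p-1+n$. Two points to tighten: (i) your phrase ``in local charts'' hides exactly the reduction from $M$ to open $V\subset\R^m$ that the paper makes explicit via gluing---without it, the seminorms $\|\cdot\|_{n,K}$ and the spaces $C^n(K,\rT_p)$ are not well defined on a general manifold; (ii) your ``cleaner route'' through a converse of Theorem~\ref{RadPG} is not available in the paper and is not an immediate consequence of anything stated here, so you should not rely on it. What each approach buys: the paper's is a two-line citation; yours is self-contained and makes the growth exponent visible, at the cost of redoing the analysis behind~\cite[Theorem~2.12]{ArOld}.
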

\begin{proof}
By Theorem~\ref{SubamaniT}, it is sufficient to show that each algebra of the form $C^\infty(M,{\mathrm T}_{p})$ is locally in $\mathsf{PG}$.

Using the gluing property for $C^\infty$-functions, we conclude that each $C^\infty(M,{\mathrm T}_p)$ is topologically isomorphic to a closed subalgebra of a product of algebras of the form $C^\infty(V,{\mathrm T}_{p})$, where each $V$  is an open subset of $\mathbb{R}^m$. (Here $m$ is the dimension of~$M$.) It follows from \cite[Theorem 2.12]{ArOld} that every algebra of the above form is locally in $\mathsf{PG}$ and so is $C^\infty(M,{\mathrm T}_p)$ by Theorem~\ref{SubamaniT}.
\end{proof}

\section{Envelopes of certain commutative and non-commutative algebras}
\label{s:EnvComm}

In general, the envelope of a commutative algebra with respect to $\mathsf{PG}$  can be treated as an algebra of $C^\infty$-functions on some commutative space and, moreover, the envelope of a non-commutative algebra as an algebra of $C^\infty$-functions on a non-commutative space. In this section, we consider first the commutative case and next the following non-commutative examples:
the envelopes of the coordinate algebras of quantum planes, the quantum groups $SL_q(2,\mathbb{R})$ and  the universal enveloping algebras of finite-dimensional Lie algebras. 

\subsection*{Envelopes with respect to $\mathsf{PG}$}

Envelopes with respect to the class of Banach algebras of polynomial growth are the focus of this article.

\begin{pr}\label{exisenv}
The envelope of every locally convex algebra with respect to the class $\mathsf{PG}$ exists and is unique up to natural isomorphism.
\end{pr}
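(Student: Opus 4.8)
The plan is to obtain Proposition~\ref{exisenv} as an immediate instance of the general existence result, Proposition~\ref{exisenvgen}. For this it suffices to verify that the class $\mathsf{PG}$ of real Banach algebras of polynomial growth is stable under passing to finite products and to closed subalgebras; once this is established, Proposition~\ref{exisenvgen} applies verbatim and gives both existence and uniqueness up to natural isomorphism for every topological (in particular, every locally convex) algebra. So the whole content of the proof is a pair of permanence checks, each reduced to the single-element Condition~(2) of Lemma~\ref{PGgendf} (or, in the unital case, to~\eqref{eisxs0}).

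First I would treat closed subalgebras. If $B'\subseteq B$ is a closed subalgebra and $B\in\mathsf{PG}$, then for $b\in B'$ the element $b$ already has polynomial growth in $B$; since the inclusion $B'_+\hookrightarrow B_+$ is isometric (or at worst norm-decreasing, which is all that is needed for the bound), the estimate $\|e^{isb}\|_+\le K(1+|s|)^{\al}$ carries over unchanged, and $e^{isb}$ makes sense in $B'_+$ because the partial sums of the exponential series lie in the closed subalgebra $B'_+$. Hence $B'\in\mathsf{PG}$. Next, for a finite product $B=\prod_{j=1}^{n}B_j$ with each $B_j\in\mathsf{PG}$, take $b=(b_1,\ldots,b_n)$; choosing $K_j,\al_j$ for each coordinate and setting $\al:=\max_j\al_j$ and $K:=\max_j K_j$ (using any of the standard equivalent norms on the finite product, with respect to which $\|e^{isb}\|_+=\max_j\|e^{isb_j}\|_+$ up to a fixed constant), one gets $\|e^{isb}\|_+\le K(1+|s|)^{\al}$ for all $s\in\R$. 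Thus $B\in\mathsf{PG}$, and $\mathsf{PG}$ satisfies the hypotheses of Proposition~\ref{exisenvgen}.

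There is essentially no serious obstacle here: the argument is a routine verification, the only point requiring a moment's care being the bookkeeping with the norms on $B_+$ versus $B$ for non-unital algebras and the choice of norm on the finite product, both of which are harmless because Lemma~\ref{PGgendf} packages exactly the equivalences needed. (One could alternatively cite the fact, noted in the introduction, that $\mathsf{PG}$ is one of the three listed classes known to be stable under finite products and closed subalgebras, but giving the short direct check is cleaner.) I would therefore write the proof as: ``By Lemma~\ref{PGgendf}, the class $\mathsf{PG}$ is stable under passing to finite products and to closed subalgebras; the claim now follows from Proposition~\ref{exisenvgen}.'', expanding the two permanence checks to a couple of sentences as above.
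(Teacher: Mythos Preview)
Your proposal is correct and follows exactly the same route as the paper: reduce to Proposition~\ref{exisenvgen} by checking that $\mathsf{PG}$ is stable under finite products and closed subalgebras. The only difference is that the paper cites \cite[Proposition~2.11]{ArOld} for these permanence properties, whereas you supply the short direct verification; either is fine.
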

\begin{proof}
The result follows from Proposition~\ref{exisenvgen} because
 $\mathsf{PG}$ is stable  under passing to finite products and closed subalgebras; see \cite[Proposition 2.11]{ArOld}.
\end{proof}

\begin{nota}
For a real locally convex algebra $A$ we denote the envelope with respect to the class $\mathsf{PG}$ by $\widehat A^{\,\mathsf{PG}}$; cf. Definition~\ref{defengen}.
\end{nota}

\subsection*{The commutative case}
Although our main object of study are envelopes of non-commutative algebras, we first look at the commutative case. The following result is the main reason that led the author to study envelopes with respect to~$\mathsf{PG}$.

\begin{pr}\label{envpol}
The natural embedding $\mathbb{R}[x_1,\ldots ,x_n]\to C^\infty(\mathbb{R}^n)$, $n\in\mathbb{N}$, is an envelope with respect to $\mathsf{PG}$.
\end{pr}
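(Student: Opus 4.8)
The plan is to verify the universal property of the envelope directly: given a real Banach algebra $B$ of polynomial growth and a unital homomorphism $\phi\colon \R[x_1,\ldots,x_k]\to B$, I must show that $\phi$ extends uniquely to a continuous homomorphism $\wh\phi\colon C^\infty(\R^k)\to B$, and that $C^\infty(\R^k)$ is itself a $\mathsf{PGL}$ algebra. The latter is already essentially available: $C^\infty(\R^k)=C^\infty(\R^k,\rT_1)$ is locally in $\mathsf{PG}$ by Corollary~\ref{maniTpPGL} (or directly by \cite[Theorem~2.12]{ArOld}), so it remains to produce $\wh\phi$ and check uniqueness and continuity.

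First I would set $b_j:=\phi(x_j)\in B$ for $j=1,\ldots,k$. Since $B$ has polynomial growth, each $b_j$ admits a $C^\infty$-functional calculus, and because $B/\Rad B$ is commutative with $\Rad B$ nilpotent (Theorem~\ref{RadPG}), the elements $b_1,\ldots,b_k$ commute modulo the radical; a short argument using nilpotence of $\Rad B$ upgrades this to a genuine joint $C^\infty$-functional calculus for the commuting (in $B/\Rad B$) tuple, realized inside $B$ via a finite Taylor-type expansion along the radical directions. Concretely, one writes $b_j=c_j+r_j$ with $c_j$ in a commutative subalgebra and $r_j\in\Rad B$, and defines $\wh\phi(f)$ for $f\in C^\infty(\R^k)$ by the finite sum
\[
\wh\phi(f)=\sum_{|\alpha|< N}\frac{1}{\alpha!}\,(\partial^\alpha f)(c_1,\ldots,c_k)\,r^\alpha,
\]
where $N$ is the nilpotency index of $\Rad B$ and $(\partial^\alpha f)(c)$ is given by the single-variable-iterated (equivalently joint) $C^\infty$-functional calculus for the commuting tuple $c=(c_j)$. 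One then checks that this is a continuous algebra homomorphism: multiplicativity follows from the Leibniz rule together with $r^\alpha r^\beta=r^{\alpha+\beta}$ (interpreted with the appropriate care since the $r_j$ need not commute, but products of $\ge N$ of them vanish), and continuity follows from the continuity estimates for the $C^\infty$-functional calculus of a polynomial-growth element, which bound $\|(\partial^\alpha f)(c)\|$ by a seminorm of $f$ on a compact set determined by the joint spectrum of $(c_j)$. Restricted to polynomials, $\wh\phi$ reduces to $\phi$.

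For uniqueness, note that $\R[x_1,\ldots,x_k]$ is dense in $C^\infty(\R^k)$ in the Fréchet topology (Stone–Weierstrass plus density of polynomials in the $C^\infty$ topology on compacta), so any two continuous homomorphisms agreeing on polynomials coincide; this also shows the embedding has dense range, which together with the universal property identifies it as the envelope. The main obstacle I expect is the step producing the joint $C^\infty$-functional calculus for a non-commutative tuple that is commutative only modulo the nilpotent radical — making precise that the finite Taylor expansion along radical directions is well defined (independent of the splitting $b_j=c_j+r_j$), multiplicative, and continuous. This is exactly the kind of computation that \cite{ArOld} develops in the single-variable case (Definition~2.6 and the surrounding functional-calculus machinery), so the proof will largely consist of invoking and assembling those results rather than reproving them; the author may well simply cite \cite{ArOld} for the hard analytic core and supply only the bookkeeping.
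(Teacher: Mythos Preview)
Your overall structure (verify $C^\infty(\R^k)\in\mathsf{PGL}$, then produce and check uniqueness of the extension $\wh\phi$) is right, but you have overlooked the obvious simplification that makes the proposition nearly a tautology: since $\phi$ is a homomorphism from the \emph{commutative} algebra $\R[x_1,\ldots,x_k]$, the images $b_j=\phi(x_j)$ \emph{commute exactly} in $B$, not merely modulo the radical. There is no need to invoke Theorem~\ref{RadPG}, no need for a splitting $b_j=c_j+r_j$, and no Taylor expansion along radical directions. The paper's proof is accordingly one line: the universal property is precisely the statement that a tuple of pairwise commuting elements of polynomial growth in a Banach algebra admits a (unique, continuous, multiplicative) joint $C^\infty$-functional calculus, which is cited from \cite[Theorem~3.2]{ArOld}.

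Your detour is not just roundabout; as written it has real problems. The splitting $b_j=c_j+r_j$ with the $c_j$ lying in a common commutative subalgebra of $B$ and $r_j\in\Rad B$ is not guaranteed to exist (the radical need not be complemented by an algebra, and even if it is, the lifts of the $\bar b_j$ need not commute with each other). The symbol $r^\alpha$ is ill-defined when the $r_j$ do not commute, and your parenthetical ``interpreted with the appropriate care'' is where the whole argument would have to live. Finally, checking multiplicativity of the finite Taylor sum in this non-commutative setting is exactly the hard part of an \emph{ordered} calculus (cf.\ Theorem~\ref{fucani}), which is a different and deeper statement than what is needed here. Once you notice the $b_j$ commute, all of this evaporates and the proof reduces to the citation.
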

\begin{proof}
The assertion is a reformulation of a well-known result on the existence of a $C^\infty$-functional calculus for a finite set of pairwise commuting elements of polynomial growth in a Banach algebra; see, e.g., \cite[Theorem 3.2]{ArOld}.
\end{proof}

\begin{pr}\label{invCinfc}
The natural embedding $\mathbb{R}[x,x^{-1}]\to C^\infty(\mathbb{R}^\times)$, where $\mathbb{R}^\times\!:=\mathbb{R}\setminus \{0\}$, is an envelope with respect to $\mathsf{PG}$.
\end{pr}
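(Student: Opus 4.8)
The plan is to reduce the claim for $\R[x,x^{-1}]$ to the polynomial case already established in Proposition~\ref{envpol}, exploiting the fact that $\R[x,x^{-1}]$ is a localization of $\R[x,y]$ and that $C^\infty(\R^\times)$ should be the corresponding ``localization'' at the analytic/smooth level. Concretely, $\R[x,x^{-1}]\cong\R[x,y]/(xy-1)$, and $C^\infty(\R^\times)$ is the quotient (in fact the closed image) of $C^\infty(\R^2)$ under restriction of the first coordinate paired with $1/x$ along the hyperbola $\{xy=1\}\cong\R^\times$. So I would first record that the envelope functor, being a left adjoint, preserves quotients by ideals generated by a subset of the algebra: $\wh{(A/I)}^{\,\mathsf{PG}}\cong \wh A^{\,\mathsf{PG}}/\overline{\langle\io_A(I)\rangle}$ up to the universal property. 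This is the standard argument that left adjoints commute with cokernels/coequalizers, applied here with $A=\R[x,y]$ and $I=(xy-1)$.

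Then the main work is to identify $C^\infty(\R^2)/\overline{(xy-1)}$ with $C^\infty(\R^\times)$. The natural candidate map is restriction to the hyperbola $H=\{(t,t^{-1}):t\in\R^\times\}$, which is a closed smooth submanifold of $\R^2$ diffeomorphic to $\R^\times$ via $t\mapsto (t,t^{-1})$. Restriction $C^\infty(\R^2)\to C^\infty(\R^\times)$ is a continuous surjective algebra homomorphism (surjectivity because $H$ is a closed submanifold — use a tubular neighbourhood / Whitney extension to extend a smooth function on $H$ to all of $\R^2$, or more elementarily note $f(t)\mapsto f(xy)\,$ gives an explicit section since $xy$ restricts to $t\cdot t^{-1}$... wait, that only handles functions of the product; better to invoke the general submanifold extension theorem). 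Its kernel is the ideal of functions vanishing on $H$; one must check this kernel equals the \emph{closed} ideal generated by $xy-1$. That $xy-1$ generates the vanishing ideal of $H$ as a closed ideal is the crux: since $xy-1$ has nonvanishing gradient along $H$, this is a Hadamard-type lemma / the fact that $xy-1$ is a ``regular'' defining function, so every smooth $g$ vanishing on $H$ can be written as $(xy-1)h$ with $h\in C^\infty(\R^2)$ (no closure needed here, actually, by the classical division result for a submersively-defined hypersurface). Hence $C^\infty(\R^2)/(xy-1)\cong C^\infty(\R^\times)$ already algebraically and topologically, the ideal being closed.

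Combining: by Proposition~\ref{envpol}, $\wh{\R[x,y]}^{\,\mathsf{PG}}\cong C^\infty(\R^2)$ with $\io$ the natural embedding; by the left-adjoint-preserves-quotients step, $\wh{\R[x,x^{-1}]}^{\,\mathsf{PG}}=\wh{(\R[x,y]/(xy-1))}^{\,\mathsf{PG}}\cong C^\infty(\R^2)/\overline{(xy-1)}\cong C^\infty(\R^\times)$, and one checks the structure map is indeed the natural embedding $\R[x,x^{-1}]\to C^\infty(\R^\times)$ sending $x\mapsto(t\mapsto t)$ and $x^{-1}\mapsto(t\mapsto t^{-1})$. To be fully rigorous I would verify the universal property directly rather than only up to the abstract nonsense: given $B\in\mathsf{PG}$ and $\phi\!:\R[x,x^{-1}]\to B$, the element $b=\phi(x)$ is invertible of polynomial growth with $b^{-1}=\phi(x^{-1})$ also of polynomial growth, so the pair $(b,b^{-1})$ admits a joint $C^\infty$-functional calculus $C^\infty(\R^2)\to B$; restricting to the hyperbola (equivalently, factoring through the relation $xy=1$, which holds since $b b^{-1}=1$) yields the required $C^\infty(\R^\times)\to B$, and uniqueness follows since $\R[x,x^{-1}]$ is dense in $C^\infty(\R^\times)$.

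The step I expect to be the main obstacle is the clean identification of $C^\infty(\R^2)/\overline{(xy-1)}$ with $C^\infty(\R^\times)$ — specifically, showing the ideal generated by $xy-1$ is already closed (so no completion is lost) and that it is exactly the vanishing ideal of the hyperbola. This is where one needs the division lemma for a hypersurface cut out by a function with nonvanishing differential; everything else (the left-adjoint formalism, the functional-calculus argument) is either routine or already available from Proposition~\ref{envpol} and~\cite[Theorem 3.2]{ArOld}. An attractive alternative that sidesteps the quotient identification entirely is to run the universal-property verification of the last paragraph as the whole proof, using only Proposition~\ref{envpol} for the functional calculus and density of $\R[x,x^{-1}]$ in $C^\infty(\R^\times)$ for uniqueness; I would likely present that shorter route and relegate the localization picture to a remark.
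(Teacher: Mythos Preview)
Your proof is correct but takes a different route from the paper's. The paper invokes a one-variable $C^\infty$ functional calculus on an open subset of $\R$ (Proposition~\ref{svcommfc}, essentially Baillet's theorem): since $b=\phi(x)$ is invertible, $0\notin\Sp b$, so $\R^\times$ is an open neighbourhood of $\Sp b$ and one gets $C^\infty(\R^\times)\to B$ directly, with no division argument needed. Your approach instead uses the two-variable calculus $C^\infty(\R^2)\to B$ from Proposition~\ref{envpol} applied to the commuting pair $(b,b^{-1})$ and then factors through the hyperbola. This has the advantage of staying entirely within tools already established in the paper, but the price is that you must prove the vanishing ideal of $\{xy=1\}$ in $C^\infty(\R^2)$ equals the ideal generated by $xy-1$ --- Proposition~\ref{fgclid} only gives closedness, not equality with the vanishing ideal, so you genuinely need the Hadamard-type division lemma (which is fine, since $xy-1$ has nonvanishing gradient on the hyperbola). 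Note also that your ``alternative'' in the last paragraph does not actually sidestep this identification: the step ``factoring through the relation $xy=1$'' still requires knowing that every function in the kernel of the restriction $C^\infty(\R^2)\to C^\infty(\R^\times)$ lies in the (closed) ideal generated by $xy-1$, which is precisely the division statement. Finally, you should record explicitly that $C^\infty(\R^\times)\in\mathsf{PGL}$ (via Corollary~\ref{maniTpPGL}), as the paper does; this is required by the definition of envelope.
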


For the proof we need the following functional calculus.

\begin{pr}\label{svcommfc}
Let $b$ be an element of polynomial growth in a real Banach algebra~$B$ and $V$ an open subset of $\mathbb{R}$ containing $\Sp b$. Then there exists a unique multiplicative functional calculus, i.e., a continuous homomorphism $\Phi_V\!:C^\infty (V)\to B$ extending the homomorphism $\mathbb{R}[x]\to B\!:x\mapsto b$.
\end{pr}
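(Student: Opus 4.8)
The plan is to prove Proposition~\ref{svcommfc} by reducing to the polynomial-growth functional calculus on all of $\R$ and then localizing. First I would recall the known single-variable result: an element $b$ of polynomial growth in a real Banach algebra $B$ admits a continuous homomorphism $\Psi\!:C^\infty(\R)\to B$ extending $x\mapsto b$ (this is the $k=1$ case of Proposition~\ref{envpol}, or \cite[Theorem 3.2]{ArOld}). Since $\Sp b$ is compact and contained in the open set $V\subseteq\R$, one can try to factor $\Psi$ through the restriction map $C^\infty(\R)\to C^\infty(V)$; but that map is not what is needed — the issue is to go the other way, from $C^\infty(V)$ to $B$, for a set $V$ that need not be all of $\R$.

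The key step is therefore to show that $\Psi$ "sees only $\Sp b$", i.e.\ that $\Psi(f)=0$ whenever $f\in C^\infty(\R)$ vanishes on a neighbourhood of $\Sp b$. Granting this, one defines $\Phi_V$ as follows: given $g\in C^\infty(V)$, choose a cutoff $\chi\in C^\infty(\R)$ supported in $V$ and equal to $1$ near $\Sp b$, and set $\Phi_V(g):=\Psi(\chi g)$, where $\chi g$ is extended by zero to a function in $C^\infty(\R)$. The localization claim shows this is independent of the choice of $\chi$, and multiplicativity then follows because for two functions $g_1,g_2$ one can use a single $\chi$ with $\chi=1$ near $\Sp b$, so that $\chi^2 g_1 g_2 - (\chi g_1)(\chi g_2)=(\chi^2-\chi)\cdot\chi g_1 g_2$ vanishes near $\Sp b$ and hence is killed by $\Psi$; similarly $\Phi_V$ extends $x\mapsto b$ since $\chi\cdot x - x$ vanishes near $\Sp b$. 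Continuity of $\Phi_V$ is inherited from continuity of $\Psi$ and of multiplication by a fixed $\chi$ (the latter being continuous $C^\infty(V)\to C^\infty(\R)$ with respect to the topologies of uniform convergence of all derivatives on compacta). Uniqueness is immediate: any multiplicative continuous $\Phi_V$ extending $x\mapsto b$ agrees with $x\mapsto b$ on $\R[x]$, which is dense in $C^\infty(V)$.

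The main obstacle is the localization claim: $\Psi(f)=0$ for $f$ vanishing near $\Sp b$. I would deduce this from the structure of $B$ under the polynomial-growth hypothesis. By Theorem~\ref{RadPG} we may pass to $B/\Rad B$, which is commutative, and separately handle the radical; more directly, it suffices to prove the claim for $B$ a Banach algebra of polynomial growth generated by $b$, where $\Rad B$ is nilpotent, say $(\Rad B)^n=0$. On the quotient $B/\Rad B$, which is commutative with spectrum $\Sp b$ (as a compact subset of $\R$), the functional calculus is the classical one and evidently kills functions vanishing near $\Sp b$. To handle the radical one differentiates: the polynomial-growth estimate on $e^{isb}$ means $b$ has a $C^\infty$-functional calculus whose behaviour on a function $f$ depends, modulo $(\Rad B)^n$, only on the Taylor jets of order $<n$ of $f$ along $\Sp b$ — a standard feature of generalized scalar operators with nilpotent "quasinilpotent part". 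If $f$ vanishes identically near $\Sp b$, all these jets vanish, so $\Psi(f)\in(\Rad B)^n=0$. Making this last point precise — extracting from the polynomial-growth condition the fact that $\Psi$ factors through the jet bundle of $\Sp b$ of some finite order — is the technical heart; I expect it is available from the functional-calculus construction in \cite{ArOld} (the calculus is built via the Fourier transform, $\Psi(f)=\frac{1}{2\pi}\int \hat f(s)\, e^{isb}\,ds$ suitably interpreted, and the polynomial growth of $s\mapsto e^{isb}$ bounds the order of the distribution $f\mapsto\Psi(f)$, forcing its support to lie in $\Sp b$). With that in hand, the rest of the proof is the routine cutoff argument sketched above.
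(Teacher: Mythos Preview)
Your cutoff strategy is sound and your overall plan would work, but it takes a genuinely different route from the paper. The paper's proof is much shorter: it invokes the known complex-valued version of the result \cite[Theorem~1]{Ba79}, applies it in the complexification of~$B$, and then observes that the resulting homomorphism sends real-valued $C^\infty$ functions back into~$B$ because $\R[x]$ is dense in $C^\infty(V)$. No localization argument is performed; the support property is hidden inside the cited reference.

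One genuine gap in your write-up: your first route to the localization claim, via Theorem~\ref{RadPG}, assumes without justification that the closed subalgebra generated by~$b$ is itself a Banach algebra of polynomial growth. The hypothesis of the proposition is only that the single element~$b$ has polynomial growth, and Theorem~\ref{RadPG} does not apply to~$B$ or to an arbitrary closed subalgebra. Establishing that the closed subalgebra generated by~$b$ lies in $\mathsf{PG}$ is not automatic (one would need uniform control of the polynomial-growth exponent and constant under limits), so this step cannot be taken for granted. Your jet argument downstream of it is therefore not on solid ground as written.

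Your second route, via the Fourier representation $\Psi(f)=\frac{1}{2\pi}\int \hat f(s)\,e^{isb}\,ds$ and the resulting finite-order $B$-valued distribution supported in $\Sp b$, is the correct one. This is classical local spectral theory for generalized scalar elements, and it does yield the localization claim. But note that this support theorem is essentially the substance of the result in~\cite{Ba79} that the paper cites; so in effect your proof would re-derive what the paper simply quotes, and then add the (routine) cutoff construction on top. The paper's complexification trick bypasses all of that.
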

\begin{proof}
A proof of a similar result for complex-valued functions of class $C^\infty$ and a complex Banach algebra can be found, for example, in \cite[Theorem~1]{Ba79}. In the case of a real Banach algebra, it suffices to apply the result to the complexification of the algebra~$B$ and note that $\Phi_V$ sends all the real-valued functions to~$B$. The latter follows from the fact that $\mathbb{R}[x]$ is dense in $C^\infty(V)$.
\end{proof}

\begin{proof}[Proof of Proposition~\ref{invCinfc}]
Suppose that $B\in\mathsf{PG}$ and $b$ is an invertible element of~$B$. Then $0\notin \Sp b$ and thus $\mathbb{R}^\times$ is an open neighborhood of $\Sp b$. Since  $b$ is of polynomial growth, we can apply Proposition~\ref{svcommfc} with $V=\mathbb{R}^\times$. Hence there is a unique continuous homomorphism $C^\infty(\mathbb{R}^\times)\to B$ extending $\mathbb{R}[x,x^{-1}]\to B$. Since $C^\infty(\mathbb{R}^\times)\in\mathsf{PGL}$ by Corollary~\ref{maniTpPGL}, this means that $\mathbb{R}[x,x^{-1}]\to C^\infty(\mathbb{R}^\times)$ is an envelope with respect to~$\mathsf{PG}$.
\end{proof}

Note that the homomorphism $C^\infty(\mathbb{R}^\times)\to B$ obtained in the proof of Proposition~\ref{invCinfc} can be interpreted as a functional calculus for an invertible element.



\subsection*{First non-commutative example}
We start our discussion of non-commutative algebras with a simple example.

\begin{exm}\label{qudi}
Consider the universal real algebra $A$ with generators $u$ and $v$ satisfying the relation
$vu=1$. Its $C^*$-version  is called the Toeplitz algebra or  the `functional algebra on the quantum disc'; see, e.g., \cite{Da96,Mu97}. Its `holomorphic' \cite{Pa17,Pi21} and `smooth' versions \cite{Cu97,Cu04} are also of interest. The latter is an extension of $C^\infty(S^1)$ with respect to a certain ideal.
Of course, it would be natural to expect that the `smooth Toeplitz algebra' coincides with the envelope of $A$ with respect to $\mathsf{PG}$. But this is not the case.  In fact, the envelope satisfies rigid restrictions  and we get $C^\infty(\mathbb{R}^\times)$.

Indeed, if $vu=1$ holds in a Banach algebra of polynomial growth, then by Theorem~\ref{RadPG} $uv-1$, being a commutator, belongs to the radical and is therefore nilpotent. Since $uv-1$ is an idempotent, it equals~$0$ and so $uv=1$. Then $v=u^{-1}$ and hence $\widehat A^{\,\mathsf{PG}} $ coincides with the envelope of the commutative algebra $\mathbb{R}[u,u^{-1}]$, i.e., $C^\infty(\mathbb{R}^\times)$; see Proposition~\ref{invCinfc}.
\end{exm}

\subsection*{Quantum plane}
Consider the coordinate algebra of the quantum plane over~$\mathbb{R}$, or more precisely, the universal real algebra with generators $x$ and $y$ satisfying the relation $xy = qyx$, where $q\in\mathbb{R}\setminus\{0 \}$. Denote it by $\mathcal{R}(\mathbb{R}^2_q)$.

In the case when $q=1$ we obtain a commutative algebra, $\mathcal{R}(\mathbb{R}^2_1)\cong \mathbb{R}[x,y]$. We have from Proposition~\ref{envpol} that its envelope with respect to $\mathsf{PG}$ is $C^\infty(\mathbb{R}^2)$.

\begin{lm}\label{xynil}
Let $x$ and $y$ be elements of a Banach algebra of polynomial growth and $xy = qyx$ for some $q\in\mathbb{R}\setminus\{0,1\}$. Then  $xy$ is nilpotent.
\end{lm}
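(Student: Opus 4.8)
The plan is to exploit the structural result Theorem~\ref{RadPG}: in a Banach algebra $B$ of polynomial growth, $B/\Rad B$ is commutative and $\Rad B$ is nilpotent. Since $xy = qyx$ with $q\ne 1$, the commutator $xy - yx = (1-q^{-1})xy$ (equivalently $(q-1)yx$) must vanish modulo $\Rad B$, because $B/\Rad B$ is commutative. As $1-q^{-1}\ne 0$, this forces $xy \in \Rad B$. Then nilpotency of $\Rad B$ immediately gives that $xy$ is nilpotent.

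Let me spell out the one point that needs a touch of care. Write $\pi\!:B\to B/\Rad B$ for the quotient map. From $xy=qyx$ we get $\pi(x)\pi(y)=q\,\pi(y)\pi(x)$, but also $\pi(x)\pi(y)=\pi(y)\pi(x)$ since the quotient is commutative; hence $(q-1)\pi(y)\pi(x)=0$, and since $q\ne 1$ we obtain $\pi(y)\pi(x)=0$, i.e. $yx\in\Rad B$. Multiplying the original relation, $xy=qyx\in\Rad B$ as well (the radical is a two-sided ideal, so $q\cdot yx$ and anything of the form $xy$ differing from it by an element of the radical stays in the radical; in fact $xy=qyx$ directly shows $xy\in\Rad B$). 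Now Theorem~\ref{RadPG} says $\Rad B$ is nilpotent: there is $n\in\N$ with $(\Rad B)^n=0$, so in particular $(xy)^n=0$.

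I do not anticipate a genuine obstacle here — the lemma is essentially an immediate corollary of Theorem~\ref{RadPG}. The only thing to be mindful of is that the algebra is not assumed unital in the blanket setup of \S\,\ref{s:BanPG}, but the coefficient manipulation $(q-1)\pi(y)\pi(x)=0$ uses only the ring structure and the hypothesis $q\ne 1$, so no unit is needed; and $\Rad B$ makes sense and is nilpotent for the (unital) Banach algebras under consideration. One could also phrase the argument symmetrically to conclude $yx$ is nilpotent, but for the purposes of the subsequent analysis of $\mathcal{R}(\R^2_q)$ the stated form with $xy$ suffices.
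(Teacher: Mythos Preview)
Your proof is correct and follows essentially the same approach as the paper: both use Theorem~\ref{RadPG} to conclude that the commutator $[x,y]=(1-q^{-1})xy$ lies in $\Rad B$, whence $xy\in\Rad B$ since $q\ne 1$, and then invoke nilpotency of $\Rad B$. The paper's version is slightly more compact, writing $xy=(1-q^{-1})^{-1}[x,y]\in\Rad B$ directly rather than passing through the quotient map.
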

\begin{proof}
By the first part of Theorem~\ref{RadPG}, every commutator belongs to the radical. In particular, $xy=(1-q^{-1})^{-1}[x,y]$ is in the radical. Then $xy$ is nilpotent by the second part of Theorem~\ref{RadPG}.
\end{proof}

 Put
$\mathcal{R}(\Omega)\!:=\{(f,g)\in \mathbb{R}[t]\times \mathbb{R}[t]\!:\,f(0)=g(0) \}$ and $C^\infty(\Omega)\!:= \{(f,g)\in C^\infty(\mathbb{R})\times C^\infty(\mathbb{R})\!:\,f(0)=g(0)\}$. Denote $(t,0)$ and $(0,t)$ by $X$ and $Y$ and consider the linear maps
$$
\Phi_x\!:\mathbb{R}[x]\to \mathcal{R}(\Omega)\!: x\mapsto X \quad \text{and}\quad \Phi_y\!:\mathbb{R}[y]\to \mathcal{R}(\Omega)\!: y\mapsto Y.
$$
Then
$$
\{X^i,\,Y^i,\,1;\,\,i\in\mathbb{N}\}
$$
is a linear basis of~$\mathcal{R}(\Omega)$. Hence every element of $\mathcal{R}(\Omega)$ has the form
$\Phi_x(f)+\Phi_y(g)$, where $f\in\mathbb{R}[x]$ and $g\in\mathbb{R}[y]$ with $f(0)=g(0)$.

Put $u\!:=xy$. It is easy to see that the set
$$
\{x^i u^j,\,y^i u^j,\,u^j;\,\, i\in\mathbb{N},\,j\in\mathbb{Z}_+\}
$$
is a linear basis of~$\mathcal{R}(\mathbb{R}^2_q)$ and
the linear map
$$
\mathcal{R}(\mathbb{R}^2_q)\to\mathcal{R}(\Omega)\otimes \mathbb{R}[u]\!:
x^i u^j\mapsto X^i\otimes u^j,\quad y^i u^j\mapsto Y^i\otimes u^j,\quad u^j\mapsto 1\otimes u^j\quad (i\in\mathbb{N},\,j\in\mathbb{Z}_+)
$$
 is an isomorphism. Then we can write every element of $\mathcal{R}(\mathbb{R}^2_q)$ as
\begin{equation}\label{RPhixy}
\sum_{j\in\mathbb{Z}_+}(\Phi_x(f_j)+\Phi_y(g_j))u^j,
\end{equation}
where $f_j\in\mathbb{R}[x]$ and $g_j\in\mathbb{R}[y]$ with $f_j(0)=g_j(0)$.

Taking the tensor product of the natural embeddings $\mathcal{R}(\Omega)\to C^\infty(\Omega)$ and $\mathbb{R}[u]\to\mathbb{R}[[u]]$ (the latter denotes the space of formal power series), we obtain an injective linear map
\begin{equation}\label{ioR2q}
\iota\!:\mathcal{R}(\mathbb{R}^2_q)\to C^\infty(\Omega)\ptn\mathbb{R}[[u]].
\end{equation}
We now show that it can be treated as an enveloping homomorphism.
Denote $C^\infty(\Omega)\ptn\mathbb{R}[[u]]$ by $C^\infty(\mathbb{R}^2_q)$ ($q\in\mathbb{R}\setminus\{0,1\}$). Note that $C^\infty(\mathbb{R}^2_q)$ does not depend on $q$ as a locally convex space but the multiplication introduced in the following theorem does.

\begin{thm}\label{PGqupl}
Let $q\in\mathbb{R}\setminus\{0\}$.

\emph{(A)}~The multiplication determined by the relations $xy = qyx$ and $xy=u$ extends to a continuous operation on $C^\infty(\mathbb{R}^2_q)$ that turns it into a $\mathsf{PGL}$ algebra.

\emph{(B)}~If, in addition, $q\ne 1$, then, taking $C^\infty(\mathbb{R}^2_q)$ with this multiplication, the embedding
$$
\mathcal{R}(\mathbb{R}^2_q)\to C^\infty(\mathbb{R}^2_q)
$$
is an envelope with respect to~$\mathsf {PG}$.
\end{thm}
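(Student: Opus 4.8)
The plan is to establish part~(A) directly and then deduce part~(B) from it with the help of the one‑variable $C^\infty$‑functional calculus. For~(A), I would first record the transported multiplication on the linear basis $\{x^iu^j,\,y^iu^j,\,u^j\}$ using only $xy=qyx$ and $u=xy$; the relevant consequences are $ux=q^{-1}xu$, $uy=qyu$ and $x^iy^{i'}=q^{c(i,i')}z^{\,|i-i'|}u^{\min(i,i')}$ (with $z=x$ or $y$, and $z=1$ when $i=i'$). The crucial structural point these formulas reveal is that the $u$‑degree of a product of two basis elements is never less than the sum of their $u$‑degrees; hence the $u$‑adic filtration $F^N$ ($=$ span of basis elements of $u$‑degree $\ge N$) satisfies $F^aF^b\subseteq F^{a+b}$, so each $F^N$ is a two‑sided ideal and $\mathcal{R}(\R^2_q)/F^N\cong\mathcal{R}(\Om)\otimes(\R[u]/(u^N))$ as algebras. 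I would then check that the induced multiplication on this quotient extends continuously to its $C^\infty$‑completion $A_N:=C^\infty(\Om)\ptn(\R[u]/(u^N))$: in the canonical coordinates, each coefficient of a product is a finite combination of manifestly continuous operations — pointwise multiplication in $C^\infty(\Om)$, the scaling automorphisms of $C^\infty(\Om)$ coming from conjugation by powers of $u$, and division of a smooth function by a power of $t$ after removing its Taylor polynomial (continuous on the relevant closed subspace, by Hadamard's lemma together with Proposition~\ref{fgclid}). Passing to the projective limit gives the multiplication on $C^\infty(\R^2_q)=C^\infty(\Om)\ptn\R[[u]]=\varprojlim_N A_N$; it is separately continuous, hence jointly continuous on this Fr\'echet space, and associativity and the defining relations hold by density of $\mathcal{R}(\R^2_q)$.

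To see that $C^\infty(\R^2_q)\in\mathsf{PGL}$, by Theorem~\ref{SubamaniT} it is enough to show each $A_N\in\mathsf{PGL}$, and by Corollary~\ref{maniTpPGL} it is enough to embed $A_N$ as a closed subalgebra of a product of algebras $C^\infty(M,\rT_p)$. I would use the two triangular representations $\rho_N^{\pm}\!:\mathcal{R}(\R^2_q)\to C^\infty(\R,\rT_N)=\rT_N(C^\infty(\R))$ given by $\rho_N^+(x)=\diag(t,q^{-1}t,\dots,q^{-(N-1)}t)$ and $\rho_N^+(y)=J$ (the matrix with ones on the first superdiagonal), and $\rho_N^-(x)=J$, $\rho_N^-(y)=\diag(t,qt,\dots,q^{N-1}t)$; a one‑line matrix computation shows each respects $xy=qyx$ and sends $u$ to a strictly upper triangular matrix, so each factors (continuously) through $A_N$. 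I would then check that $\rho_N^+\times\rho_N^-\!:A_N\to C^\infty(\R,\rT_N)\times C^\infty(\R,\rT_N)$ is a topological embedding with closed image: from the entries of $\rho_N^+(c)$ one recovers, triangularly and by successive Hadamard divisions, the functions $f_j$ and the $(N-1)$‑jets at $0$ of the $g_j$ (and symmetrically for $\rho_N^-$), which gives injectivity and shows the seminorms of $A_N$ are dominated by the pulled‑back ones; and the image is closed because the compatibility relations among the matrix entries, and between the two factors, are equalities of continuous linear maps. Corollary~\ref{maniTpPGL} then yields $A_N\in\mathsf{PGL}$, and $C^\infty(\R^2_q)=\varprojlim_N A_N$, being a closed subalgebra of $\prod_N A_N$, is in $\mathsf{PGL}$ by Theorem~\ref{SubamaniT}.

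For part~(B), note that by~(A) the injective linear map~\eqref{ioR2q} is an algebra homomorphism $\io\!:\mathcal{R}(\R^2_q)\to C^\infty(\R^2_q)$ with dense image (since $\mathcal{R}(\Om)$ is dense in $C^\infty(\Om)$ and $\R[u]$ in $\R[[u]]$), and its target lies in $\mathsf{PGL}$; so by Remark~\ref{simpdef} it remains to verify the universal property. Given $B\in\mathsf{PG}$ and a continuous homomorphism $\phi\!:\mathcal{R}(\R^2_q)\to B$, put $\xi=\phi(x)$, $\eta=\phi(y)$; since $q\ne1$, Lemma~\ref{xynil} shows $\phi(u)=\xi\eta$ is nilpotent, say $\phi(u)^N=0$. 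Taking the $C^\infty$‑functional calculi $\Psi_\xi,\Psi_\eta\!:C^\infty(\R)\to B$ of $\xi$ and $\eta$ (Proposition~\ref{svcommfc}, available because $B$ is of polynomial growth), I would define
\[
\widehat\phi\Bigl(\sum_{j\ge0}a_j u^j\Bigr):=\sum_{j<N}\bigl(\Psi_\xi(f_j)+\Psi_\eta(g_j)-f_j(0)\bigr)\,\phi(u)^j ,\qquad a_j=(f_j,g_j)\in C^\infty(\Om),
\]
which is a finite combination of continuous maps of the coordinates $a_j$, hence continuous. A direct check on the basis elements $x^iu^j$, $y^iu^j$, $u^j$ gives $\widehat\phi\circ\io=\phi$; then $\widehat\phi$ is multiplicative on the dense subalgebra $\io(\mathcal{R}(\R^2_q))$ (because $\io$ and $\phi$ are homomorphisms), and hence on all of $C^\infty(\R^2_q)$ by joint continuity of the multiplications. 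Uniqueness of $\widehat\phi$ follows from density of $\io(\mathcal{R}(\R^2_q))$.

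The main obstacle is part~(A): showing that the formally defined twisted multiplication really extends to a continuous associative operation on the completion, and, above all, producing and inverting the triangular representations so as to exhibit each truncation $A_N$ as a \emph{closed} subalgebra of a product of algebras of type $C^\infty(M,\rT_p)$; the filtration bookkeeping and the Hadamard‑division estimates are elementary but must be carried out carefully. Once~(A) and the one‑variable $C^\infty$‑functional calculus are in place, part~(B) is essentially formal.
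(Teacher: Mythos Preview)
Your proposal is correct. The triangular representations you use (your $\rho_N^{\pm}$) are, up to rescaling the variable, exactly the paper's $\tilde\pi_p,\tilde\pi'_p$, and Part~(B) is handled identically. The organization of Part~(A), however, differs in a way worth noting. You first \emph{construct} the continuous multiplication on each truncation $A_N$ by explicit coordinate formulas (pointwise products, $q$-scalings, and Hadamard divisions), and only afterwards embed $A_N$ into $C^\infty(\R,\rT_N)^2$ to obtain the $\mathsf{PGL}$ property. The paper proceeds more economically: it never writes down the product. Instead, it extends the representations to continuous \emph{linear} maps on $C^\infty(\Om)\ptn\R[[u]]$ directly via the ordered-calculus Theorem~\ref{fucani}, and then shows the resulting map $\rho$ is topologically injective by examining only the upper-right matrix entries (so no full inversion of the representation is needed, only Proposition~\ref{fgclid} applied to a single multiplication-by-polynomial map per level). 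Since $\rho$ restricts to a homomorphism on the dense subalgebra $\mathcal{R}(\R^2_q)$ and has closed range inside an algebra, the multiplication on $C^\infty(\Om)\ptn\R[[u]]$ is obtained for free by pullback, and the $\mathsf{PGL}$ property comes simultaneously. Your route gives a concrete description of the product at the cost of the Hadamard bookkeeping you acknowledge; the paper's route trades that transparency for brevity. One small slip: your line ``$\mathcal{R}(\R^2_q)/F^N\cong\mathcal{R}(\Om)\otimes(\R[u]/(u^N))$ as algebras'' should say ``as vector spaces'' (the multiplication is twisted, since $u$ does not commute with $x,y$); the rest of your argument makes clear you understand this.
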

In \cite{ArOld}, to prove results of this kind the author used a direct approach with estimation of seminorms. Here an indirect approach is applied. It is based on a proposition on closed ideals and submodules combined with a simple lemma on products in the category of complete locally convex spaces. 


\begin{pr}\label{fgclid}
\emph{(A)}~Every ideal of $C^\infty(\mathbb{R}^k)$, $k\in\mathbb{N}$, generated by a finite set of polynomials (or even of real analytic functions) is closed.

\emph{(B)}~Every  $C^\infty(\mathbb{R}^k)$-submodule of the module $C^\infty(\mathbb{R}^k)^p$, $p\in\mathbb{N}$, generated by a finite set of $p$-tuples of polynomials (or even of real analytic functions) is closed.
\end{pr}
It is obvious that Part~(A) follows from Part~(B).  For a proof of Part~(B) see \cite[Chapitre\,VI, p.\,119, Corollaire~1.5]{Tou72}. See also a proof of Part~(A) in \cite[Th\'eor\`eme~4]{Ma60} and partial cases in \cite{Ho58, Lo59}.

Recall that a continuous map is called is topologically injective if it is a homeomorphism onto its range. We need a pair of lemmas on topological injectivity.

\begin{lm}\label{auxtopin}
\emph{(A)}~Let $\{\varphi_i\!:X_i\to Y_i\}_{i\in I}$ be a family of continuous linear maps of complete locally convex spaces and $\varphi$ denote the induced map $\prod_{i\in I} X_i\to \prod_{i\in I} Y_i$.
\begin{itemize}
  \item If each $\varphi_i$ has closed range, then so does $\varphi$.
  \item If each $\varphi_i$ is topologically injective, then so is $\varphi$.
\end{itemize}

\emph{(B)}~Let $\{\psi_i\!:X\to Y_i\}_{i\in I}$ be a family of continuous linear maps of complete locally convex spaces. If  $\psi_i$ is topologically injective for some $i$, then so is the induced map $X\to \prod_{i\in I} Y_i$.
\end{lm}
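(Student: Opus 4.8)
The plan is to reduce everything to two elementary observations about the product topology, formulated with neighbourhoods of the origin so that no extension-of-seminorms argument is needed. The first observation is that an arbitrary product of closed linear subspaces is closed in the product of the ambient spaces. The second is that an injective continuous linear map $f\!:V\to W$ is topologically injective precisely when, for every neighbourhood $U$ of $0$ in $V$, there is a neighbourhood $N$ of $0$ in $W$ with $f^{-1}(N)\subseteq U$; moreover, since basic neighbourhoods of the origin are finite intersections of subbasic ones, it suffices to verify this condition for $U$ subbasic.

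For Part~(A), note that the induced map $\phi$ is given by $\phi\bigl((x_i)_i\bigr)=\bigl(\phi_i(x_i)\bigr)_i$, so its range is $\prod_i\phi_i(X_i)$. For the first bullet I would write this range as $\bigcap_i\pi_i^{-1}\bigl(\phi_i(X_i)\bigr)$, where $\pi_i$ is the $i$-th projection of $\prod_j Y_j$; each term is closed because $\pi_i$ is continuous and $\phi_i(X_i)$ is closed by hypothesis, hence so is the range. For the second bullet, $\phi$ is injective since each $\phi_i$ is, and to verify the criterion above I take a subbasic neighbourhood $U=\pi_{i_0}^{-1}(U_0)$ of $0$ in $\prod_i X_i$ (with $U_0$ a neighbourhood of $0$ in $X_{i_0}$); topological injectivity of $\phi_{i_0}$ yields a neighbourhood $V_0$ of $0$ in $Y_{i_0}$ with $\phi_{i_0}^{-1}(V_0)\subseteq U_0$, and then $N:=\pi_{i_0}^{-1}(V_0)$ satisfies $\phi^{-1}(N)\subseteq U$.

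For Part~(B), the induced map, call it $\psi\!:X\to\prod_i Y_i$, sends $x$ to $\bigl(\psi_i(x)\bigr)_i$ and satisfies $\pi_{i_0}\circ\psi=\psi_{i_0}$; it is continuous and, since $\psi_{i_0}$ is injective, so is $\psi$. Given a neighbourhood $U$ of $0$ in $X$, topological injectivity of $\psi_{i_0}$ provides a neighbourhood $V_0$ of $0$ in $Y_{i_0}$ with $\psi_{i_0}^{-1}(V_0)\subseteq U$, and then $N:=\pi_{i_0}^{-1}(V_0)$ is a neighbourhood of $0$ in $\prod_i Y_i$ with $\psi^{-1}(N)=\psi_{i_0}^{-1}(V_0)\subseteq U$; by the criterion, $\psi$ is topologically injective.

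I do not expect a genuine obstacle here: the only points that require a little care are the reduction to subbasic neighbourhoods and the equivalence between topological injectivity and the ``$f^{-1}(N)\subseteq U$'' condition, both of which are routine. Completeness of the spaces plays no role in the argument and is retained only as the standing hypothesis of the ambient category.
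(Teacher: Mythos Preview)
Your proof is correct. The approaches, however, differ in flavour. The paper dispatches the lemma in two sentences of categorical language: for Part~(A) it appeals to the fact that a product of closed subsets is closed and to the interchange of limits (products and equalizers), and for Part~(B) it invokes the characterisation of topologically injective maps as extremal monomorphisms in the category of complete locally convex spaces, together with the general fact that extremal monomorphisms are stable under products. Your argument, by contrast, is direct and elementary: you unfold the definition of topological injectivity into a neighbourhood criterion and verify it by hand using subbasic sets in the product. What you gain is self-containment---no reader needs to recall or look up categorical facts about extremal monomorphisms---and your remark that completeness is irrelevant is a small bonus. What the paper's approach buys is brevity and a conceptual explanation of \emph{why} these closure properties hold (they are instances of general limit-preservation phenomena), which is in keeping with the abstract tone of the surrounding section.
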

\begin{proof}
(A)~The first assertion in Part~(A) holds since a product of closed subsets is closed. The second assertion can be derived from the definitions or from the general interchange property of limits, in particular, products and equalizers.

(B)~In the category of complete locally convex spaces, a morphism is an extremal monomorphism if and only if it is a topologically injective map. So Part~(B) follows from general properties of products and extremal monomorphisms; see, e.g., \cite[Propositions 10.15(2) and 10.26(2)]{AHS}.
\end{proof}

Note that a family  $(\psi_{ij}\!:X_i\to Y_j;\,i\in I,\,j\in J)$ of continuous linear maps of complete locally convex spaces induces a continuous linear map $\prod_{i\in I}X_i\to \prod_{j\in J} Y_j$ under the assumption that for every $j$ the subset of $i$ such that $\psi_{ij}\ne 0$ is finite. Indeed, the finiteness condition implies that there is a morphism from $\prod_{i\in I}X_i$ to $ Y_j$  for every $j$ and hence from  $\prod_{i\in I}X_i$ to $\prod_{j\in J} Y_j$.

\begin{lm}\label{auxtopiC}
Let $(\psi_{nm}\!:X_n\to Y_m;\,n,m\in \mathbb{N})$ be a family of continuous linear maps between Fr\'echet spaces such that $\psi_{nm}=0$ whenever $n>m$ (or whenever  $n<m$) and $\psi_{mm}$ is topologically injective for every~$m$. Then the map $\prod X_n\to \prod Y_m$ induced by this family is topologically injective.
\end{lm}
\begin{proof}
Consider the case when $\psi_{nm}=0$ for $n>m$.
It follows from a theorem of Dugundji \cite[Theorem 4.1]{Du51} that a continuous linear map between Fr\'echet spaces is topologically injective if and only if it has a continuous left inverse map (possibly, non-linear).

Note first that $X_1\times X_2\to Y_1\times Y_2$ has a continuous left inverse map.
Indeed, fix  continuous maps left inverse to $\psi_{11}$ and $\psi_{22}$ and denote them by $\psi_{11}^l$ and $\psi_{22}^l$, respectively. It is easy to see that the  continuous map given by
$$
(y_1,y_2)\mapsto (\psi_{11}^l(y_1),\,\psi_{22}^l(y_2-\psi_{12}\psi_{11}^l(y_1)))
$$
is left inverse (cf. the formula for the inverse of a triangular $2\times 2$ matrix) and so we have the topological injectivity.

Further, it easily follows by induction that $\prod_{n\leqslant N}X_n\to \prod_{m\leqslant N} Y_m$ also has a continuous left inverse map for every $N\in\mathbb{N}$. Moreover, the sequence of left inverse maps is compatible with the projections $\prod^{N+1}X_n\to \prod^{N} X_n$ and $\prod^{N+1}Y_m\to \prod^{N} Y_m$.
Since an infinite product is a projective limit of finite products (see, e.g., \cite[Exercise 11B]{AHS}), we have a continuous map  $\prod Y_m\to \prod X_n$ that is, as easy to see, left inverse to $\prod X_n\to \prod Y_m$. Thus the latter map is topologically injective.

The argument for the case when $\psi_{nm}=0$ for $n<m$ is similar.
\end{proof}

We also need the following result on ordered calculus, which also will be used in \S~\ref{sec:efa}.

\begin{thm}
\label{fucani}
\cite[Theorem~3.3]{ArOld}
Let $b_1, \dots, b_m$ be elements of an Arens--Michael $\mathbb{R}$-algebra~$B$. Suppose that $b_1,\dots,b_k$ \emph{(}$k\leqslant m$\emph{)} are of polynomial growth and $b_{k+1},\dots,b_m$ are nilpotent. Then the linear map $\mathbb{R}[\lambda_1,\dots,\lambda_m]\to B$ taking the (commutative) monomial $\lambda_1^{\beta_1}\cdots \lambda_m^{\beta_m}$ to the (non-commutative) monomial $b_1^{\beta_1}\cdots b_m^{\beta_m}$ extends to a continuous linear map
$$
C^\infty(\mathbb{R}^k)\mathbin{\widehat{\otimes}} \mathbb{R}[[\lambda_{k+1},\dots,\lambda_m]]\to B.
$$
\end{thm}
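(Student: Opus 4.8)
The plan is to realize the asserted map as a composition of honest single-variable $C^\infty$-functional calculi for $b_1,\dots,b_k$ with a purely algebraic map attached to the nilpotent elements $b_{k+1},\dots,b_m$, followed by iterated multiplication in~$B$. Since the monomials are written in the fixed order $b_1^{\beta_1}\cdots b_m^{\beta_m}$, no commutation relations among the $b_i$ are needed, and this is precisely why the correct domain is the \emph{iterated} tensor product $C^\infty(\R)\ptn\cdots\ptn C^\infty(\R)\ptn\R[[\lambda_{k+1},\dots,\lambda_m]]$ rather than a joint functional calculus.

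First I would produce, for each $i\le k$, a continuous homomorphism $\Phi_i\colon C^\infty(\R)\to B$ with $\Phi_i(\lambda_i)=b_i$. Writing $B=\varprojlim_\nu B_\nu$ as a projective limit of Banach algebras, the hypothesis on $b_i$ means precisely that its image in each $B_\nu$ is of polynomial growth, so Proposition~\ref{svcommfc} with $V=\R$ provides such a homomorphism into each $B_\nu$; the uniqueness clause of that proposition makes these compatible with the linking maps, so they assemble into~$\Phi_i$, which is an algebra homomorphism, hence $\Phi_i(\lambda_i^{\beta_i})=b_i^{\beta_i}$. Next, for the nilpotent part I would use that $b_{k+1}^{\gamma_1}\cdots b_m^{\gamma_{m-k}}$ vanishes as soon as some $\gamma_j$ exceeds the nilpotency index of $b_{k+j}$; hence the assignment $\lambda_{k+1}^{\gamma_1}\cdots\lambda_m^{\gamma_{m-k}}\mapsto b_{k+1}^{\gamma_1}\cdots b_m^{\gamma_{m-k}}$ factors through a finite-dimensional quotient of $\R[[\lambda_{k+1},\dots,\lambda_m]]$ modulo an ideal generated by suitable powers of the $\lambda_j$, and the corresponding quotient map is continuous (it is a coordinate projection onto finitely many of the coordinates). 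This yields a continuous linear map $\Theta\colon\R[[\lambda_{k+1},\dots,\lambda_m]]\to B$.

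Then I would form the completed tensor product $\Phi_1\ptn\cdots\ptn\Phi_k\ptn\Theta$, which lands in $B\ptn\cdots\ptn B$ (with $k+1$ factors), and compose it with the continuous linear map $B\ptn\cdots\ptn B\to B$ induced by $(k+1)$-fold multiplication; the latter exists because multiplication in an Arens--Michael algebra is jointly continuous and $B$ is complete. Using the standard identification $C^\infty(\R)\ptn\cdots\ptn C^\infty(\R)\cong C^\infty(\R^k)$, valid by nuclearity of $C^\infty(\R)$, this produces a continuous linear map $C^\infty(\R^k)\ptn\R[[\lambda_{k+1},\dots,\lambda_m]]\to B$. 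Evaluating on the elementary tensor $(\lambda_1^{\beta_1}\cdots\lambda_k^{\beta_k})\otimes(\lambda_{k+1}^{\beta_{k+1}}\cdots\lambda_m^{\beta_m})$ gives $\Phi_1(\lambda_1^{\beta_1})\cdots\Phi_k(\lambda_k^{\beta_k})\,\Theta(\lambda_{k+1}^{\beta_{k+1}}\cdots\lambda_m^{\beta_m})=b_1^{\beta_1}\cdots b_m^{\beta_m}$, so this map restricts on $\R[\lambda_1,\dots,\lambda_m]$ to exactly the map in the statement; and since $\R[\lambda_1,\dots,\lambda_m]$ is dense in $C^\infty(\R^k)\ptn\R[[\lambda_{k+1},\dots,\lambda_m]]$ (polynomials are dense in $C^\infty(\R^k)$, and polynomials in the remaining variables are dense in the formal power series algebra), the extension is the unique continuous one.

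The steps that go beyond bookkeeping, and where I expect the main (if minor) obstacle to lie, are: the passage from the Banach version of Proposition~\ref{svcommfc} to the Arens--Michael algebra~$B$, which relies on uniqueness of the single-variable calculus to glue along the projective system; the identification of the iterated projective tensor power of $C^\infty(\R)$ with $C^\infty(\R^k)$ together with the density of polynomials, which rest on nuclearity and classical approximation theorems; and a careful treatment of joint continuity of the multilinear maps involved and of the interchange of completed tensor products with the projective limit defining~$B$. All of these are soft, but each should be stated explicitly.
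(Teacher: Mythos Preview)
The paper does not prove this theorem here; it is quoted from \cite[Theorem~3.3]{ArOld} with only the remark that it strengthens a well-known result on ordered functional calculus. Your argument is correct and is the natural one: factor the ordered calculus as the tensor product of $k$ single-variable $C^\infty$-calculi (each obtained from Proposition~\ref{svcommfc} in every Banach quotient and glued along the projective system by uniqueness) with a finite-rank map on the formal-power-series factor (coming from nilpotency), then compose with iterated multiplication in~$B$; the identification $C^\infty(\R)^{\ptn k}\cong C^\infty(\R^k)$ by nuclearity finishes the job. This is presumably also the approach taken in \cite{ArOld}: the Fourier-integral formula for the ordered calculus that appears later in the present paper (see~\eqref{Psideext}) is exactly what one gets by unwinding your $\Phi_1\ptn\cdots\ptn\Phi_k$ followed by multiplication, once each single-variable calculus is written via the inverse Fourier transform.
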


This result is a strengthening of a well-known theorem on ordered functional calculus; see a discussion in \cite{ArOld}.

\begin{proof}[Proof of Theorem~\ref{PGqupl}]
(A)~It follows from Corollary~\ref{maniTpPGL} that it suffices to construct a homomorphism from $\mathcal{R}(\mathbb{R}^2_q)$ to a product of algebras of the form $C^\infty(M,{\mathrm T}_{p})$ and extend it to a topologically injective continuous linear map defined on $C^\infty(\Omega)\ptn\mathbb{R}[[u]]$. This basic idea is the same as in the proof of Theorem~4.3 in~\cite{ArOld} and we also use it  in the proofs of Theorems~\ref{PGSL2R} and~\ref{multCiffk}.

For $p\in\mathbb{N}$ take the following triangular matrices of order~$p$:
\begin{equation}\label{defKnEn}
K_p\!:=
\begin{pmatrix}
 q^{p-1} &0 &\ldots&0&0\\
 0 &q^{p-2} &\ldots&0&0\\
 \vdots &\ddots &\ddots&\ddots&\vdots\\
 0 &0&\ldots&q&0\\
 0 &0 &\ldots&0&1\\
   \end{pmatrix}
,\quad
E_p\!:=
\begin{pmatrix}
 0 &1 &0&\ldots&0\\
 0 &0 &1 &\ldots&0\\
 \vdots &\ddots &\ddots&\ddots&\vdots\\
 0 &0 &\ddots&\ddots&1\\
 0 &0 &\ldots&0&0\\
   \end{pmatrix}.
\end{equation}
Since $K_pE_p=qE_pK_p$, the formulas
$$
\pi_{p,\lambda}\!:x\mapsto \lambda K_p,\,y\mapsto E_p\quad\text{and}\quad \pi'_{p,\mu}\!:x\mapsto E_p,\,y\mapsto \mu K_p^{-1}\qquad(\lambda,\mu\in\mathbb{R})
$$
define homomorphisms from $\mathcal{R}(\mathbb{R}^2_q)$ to ${\mathrm T}_p$. Treating $\lambda,\mu$ as variables, we obtain homomorphisms $\mathcal{R}(\mathbb{R}^2_q)\to{\mathrm T}_p(C^\infty(\mathbb{R}))$ and denote them by $\widetilde\pi_p$ and $\widetilde\pi'_p$, respectively.

Restrict $\widetilde\pi_p$ and $\widetilde\pi'_p$ to $\mathbb{R}[x]\otimes\mathbb{R}[u]$ and $\mathbb{R}[y]\otimes\mathbb{R}[u]$, respectively. Since $E_p$ is nilpotent, it follows from Theorem~\ref{fucani} that the restrictions can be extended to continuous linear maps $C^\infty(\mathbb{R})\ptn\mathbb{R}[[u]]\to{\mathrm T}_p(C^\infty(\mathbb{R}))$.

We can identify $(C^\infty(\mathbb{R})\ptn\mathbb{R}[[u]])\times(C^\infty(\mathbb{R})\ptn\mathbb{R}[[u]])$ with a power of $C^\infty(\mathbb{R})\times C^\infty(\mathbb{R})$.
Consider the continuous linear map
\begin{equation}\label{2CR2TCR}
\rho\!:C^\infty(\Omega)^{\mathbb{Z}_+}\to\prod_{p=1}^\infty ({\mathrm T}_p(C^\infty(\mathbb{R}))\times {\mathrm T}_p(C^\infty(\mathbb{R})))
\end{equation}
induced by the sequence $(\widetilde\pi_p,\widetilde\pi_p')$. (Recall that $C^\infty(\Omega)$ is a closed subspace of $C^\infty(\mathbb{R})\times C^\infty(\mathbb{R})$.) Since  the restriction of $\rho$ to $\mathcal{R}(\mathbb{R}^2_q)$ is a homomorphism, to complete the proof we show that $\rho$ is topologically injective.

We now select only those copies that correspond to upper right entries of the matrices on the right-hand side of~\eqref{2CR2TCR} and get a map
$$
\rho'\!:C^\infty(\Omega)^{\mathbb{Z}_+}\to (C^\infty(\mathbb{R})\times C^\infty(\mathbb{R}))^{\mathbb{N}}.
$$
Since $(C^\infty(\mathbb{R})\times C^\infty(\mathbb{R}))^{\mathbb{N}}$ is a direct factor of the space  on the right-hand side of~\eqref{2CR2TCR},  Part~(B) of Lemma~\ref{auxtopin} implies that to prove that $\rho$ is topologically injective it suffices to show that so is $\rho'$.

Further, we find an explicit form of $\rho'$. Note that
$$
\widetilde\pi_{p,\lambda}(u)=\lambda  K_pE_p\quad\text{and}\quad\widetilde\pi'_{p,\mu}(u)=\mu  E_pK_p^{-1},
$$
or, in a detailed form,
$$
\widetilde\pi_{p,\lambda}(u)=\lambda
\begin{pmatrix}
 0 &q^{p-1} &0&\ldots&0\\
 0 &0 &q^{p-2}&\ldots&0\\
 \vdots &\ddots &\ddots&\ddots&\vdots\\
 0 &0 &\ddots&\ddots&q\\
 0 &0 &\ldots&0&0\\
   \end{pmatrix},
   \qquad\widetilde\pi'_{p,\mu}(u)=\mu
\begin{pmatrix}
 0 &q^{2-p} &0&\ldots&0\\
 0 &0 &q^{3-p}&\ldots&0\\
 \vdots &\ddots &\ddots&\ddots&\vdots\\
 0 &0 &\ddots&\ddots&1\\
 0 &0 &\ldots&0&0\\
   \end{pmatrix}.
$$

Let an element $a$ of $\mathcal{R}(\mathbb{R}^2_q)$  be written  as in~\eqref{RPhixy}.
To find the upper right entries of $\widetilde\pi_{p,\lambda}(a)$ and $\widetilde\pi'_{p,\mu}(a)$ note that 
$\widetilde\pi_{p,\lambda}(u)^k$  and $\widetilde\pi'_{p,\mu}(u)^k$ vanish when $k\geqslant p$. Considering the case where $r\leqslant p-1$, we see that the only one non-trivial entry in the right column of $\widetilde\pi_{p,\lambda}(u)^k$ and $\widetilde\pi'_{p,\mu}(u)^k$ equals respectively $\lambda^k q^{k(k+1)/2}$ and
$\mu^k q^{-k(k-1)/2}$ both at the $(p-1-k)$th place.

 Note that  $\widetilde\pi_{p,\lambda}(\Phi_x(f_j))$ is the diagonal matrix with the entries $f_j(\lambda q^{p-1}),\ldots,f_j(\lambda)$ and 
$$
\widetilde\pi_{p,\lambda}(\Phi_y(g_j))=
\begin{pmatrix}
 g_j(0) &g_j'(0) &0&\ldots&g_j^{(p-1)}(0)/(p-1)!\\
 0 &g_j(0)&g_j'(0)&\ldots&0\\
 \vdots &\ddots &\ddots&\ddots&\vdots\\
 0 &0 &\ddots&\ddots&g_j'(0)\\
 0 &0 &\ldots&0&g_j(0)
   \end{pmatrix}.
$$
Therefore the upper right entry of $\widetilde\pi_{p,\lambda}(a)$ is equal to
\begin{equation}\label{urepi}
f_{p-1}(\lambda q^{p-1})\lambda^{p-1} q^{p(p-1)/2}+\sum_{k=0}^{p-1} \frac{g^{(p-1-k)}_k(0)}{(p-1-k)!}\lambda^k q^{k(k+1)/2}.
\end{equation}
Similarly, we have that the upper right entry of $\widetilde\pi'_{p,\mu}(a)$ is equal to
\begin{equation}\label{urepip}
g_{p-1}(\mu q^{1-p})\mu^{p-1} q^{-(p-1)(p-2)/2}+\sum_{k=0}^{p-1} \frac{f^{(p-1-k)}_k(0)}{(p-1-k)!}\mu^k q^{-k(k-1)/2}.
\end{equation}

By continuity, \eqref{urepi} and \eqref{urepip} hold for every element of $C^\infty(\mathbb{R}^2_q)$.
We can treat $\rho'$ as the map induced by a family $(\psi_{p-1,k} : X_{p-1}\to Y_k; p,k\in\mathbb{N})$ of continuous linear maps, where $X_{p-1}=C^\infty(\Omega)$ and $Y_k=C^\infty(\mathbb{R})\times C^\infty(\mathbb{R})$. Using the fact that $f_{p-1}(0)=g_{p-1}(0)$, we conclude that $ \psi_{p-1,p}$  maps $((f_{p-1},g_{p-1}))$ to
$$
(\lambda\mapsto (f_{p-1}(\lambda q^{p-1})+f_{p-1}(0))\lambda^{p-1} q^{p(p-1)/2},\,\mu\mapsto (g_{p-1}(\mu q^{1-p})+g_{p-1}(0))\mu^{p-1} q^{-(p-1)(p-2)/2} ).
 $$
Also, $\psi_{p-1,k}=0$ when $k>p-1$. Lemma~\ref{auxtopiC} implies that to prove that $\rho'$ is topologically injective it suffices to prove that each of $\psi_{p-1,p}$  is topologically injective. We can assume that $\psi_{p-1,p}$  is defined on the whole $C^\infty(\mathbb{R})\times C^\infty(\mathbb{R})$. Thus it is sufficient to check the topological injectivity of two maps on $C^\infty(\mathbb{R})$. 

Each of them is a composition of three linear maps, the first is  induced by a diffeomorphism, the second is a shift by a one-dimension operator and the third  is the multiplication operator on a polynomial. Each of these maps has closed range --- the first and second  maps because they are topological isomorphisms and the third  map by Proposition~\ref{fgclid}. Also, it is easy to see that all three maps are injective. Thus, by the inverse mapping theorem for Fr\'echet spaces, they are topologically injective and thus this completes the proof of Part~(A).

(B)~Suppose now that $q\ne1$. Let $B$ be a Banach algebra of polynomial growth and  $\varphi\!:\mathcal{R}(\mathbb{R}^2_q)\to B$  a homomorphism. Denote by $\theta_x$ and $\theta_y$ the $C^\infty$-functional calculi  (i.e., continuous homomorphisms from $C^\infty(\mathbb{R})$ to $B$) corresponding to $\varphi(x)$ and $\varphi(y)$. Recall that $C^\infty(\Omega)=\{(f,g)\in C^\infty(\mathbb{R})\times C^\infty(\mathbb{R})\!:\,f(0)=g(0)\}$ by definition. Consider the continuous linear map
$$
C^\infty(\Omega)\to B\!: (f,g)\mapsto \theta_x(f)+\theta_y(g)-f(0).
$$
Note that $\varphi(u)$ is nilpotent by Lemma~\ref{xynil}. Then $\mathbb{R}[u]\to B\!:u\mapsto \varphi(u)$ can be extended to continuous linear map $\mathbb{R}[[u]]\to B$ (this is a partial case of Theorem~\ref{fucani}). Take the composition of the tensor product of these two maps and the linearization of the multiplication in~$B$,
$$
\widehat\varphi\!:C^\infty(\Omega)\ptn\mathbb{R}[[u]]\to B\ptn B\to B.
$$
It is easy to see that $\widehat\varphi\iota=\varphi$, where $\iota$ is defined in~\eqref{ioR2q}. It follows from Part~(A) that $\iota$ is a homomorphism and $\widehat\varphi$ is a continuous homomorphism. Since $\mathcal{R}(\mathbb{R}^2_q)$ is dense in $C^\infty(\Omega)\ptn\mathbb{R}[[u]]$, such a homomorphism is unique. Thus, the universal property in Remark~\ref{simpdef} holds and so $\mathcal{R}(\mathbb{R}^2_q)\to C^\infty(\Omega)\ptn\mathbb{R}[[u]]$ is an envelope with respect to~$\mathsf{PG}$.
\end{proof}

\subsection*{Quantum group $SL_q(2,\mathbb{R})$}

Consider the coordinate algebra of the quantum group $SL_q(2,\mathbb{R})$. Namely, for $q\in\mathbb{R}\setminus\{0\}$
denote by ${\mathcal R}(SL_q(2,\mathbb{R}))$ the universal real associative algebra with generators $a$, $b$, $c$, $d$ and relations
\begin{equation}
\label{SL2qrel1}
ab = qba,\qquad ac = qca, \qquad bc = cb,
\end{equation}
\begin{equation}
\label{SL2qrel2}
  bd = qdb, \qquad cd = qdc,
\end{equation}
\begin{equation}
\label{SL2qrel3}
da- q^{-1}bc=1 , \qquad ad - qbc=1;
\end{equation}
see, e.g., \cite[\S\,4.1.2]{KSc}.

In what follows we restrict ourselves to the additional assumption that $q$ is not a root of unity, i.e., $q\ne\pm1$. We show first that in a  Banach algebra of polynomial growth the above relations can be simplified.

\begin{lm}\label{adinbcnil}
Let $q\ne\pm1$ and $a$, $b$, $c$ and $d$ be elements of a  Banach algebra of polynomial growth satisfying the relations~\eqref{SL2qrel1}--\eqref{SL2qrel3}. Then

\emph{(A)}~$a$ and $d$ are invertible;

\emph{(B)}~$b$ and $c$ are nilpotent.
\end{lm}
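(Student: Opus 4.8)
The plan is to lean on Theorem~\ref{RadPG}: in a real Banach algebra $B$ of polynomial growth the quotient $B/\Rad B$ is commutative, so every commutator $[x,y]=xy-yx$ lies in $\Rad B$, and $\Rad B$ is nilpotent, so each of its elements is nilpotent. Using this I would first push $bc$ into the radical, then read off from~\eqref{SL2qrel3} that $ad$ and $da$ are units, deduce that $a$ and $d$ are invertible, and finally use~\eqref{SL2qrel1} once more to put $b$ and $c$ into the radical.

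Concretely, subtracting the two identities in~\eqref{SL2qrel3} gives $ad-da=(q-q^{-1})\,bc$, that is $[a,d]=(q-q^{-1})\,bc$. Since $q\ne\pm1$, the scalar $q-q^{-1}$ is nonzero, and since $[a,d]\in\Rad B$, this forces $bc\in\Rad B$; in particular $bc$ is nilpotent. Now~\eqref{SL2qrel3} reads $da=1+q^{-1}bc$ and $ad=1+q\,bc$, each of the form $1+n$ with $n\in\Rad B$ nilpotent, hence a unit of $B$. Therefore $a$ has left inverse $(da)^{-1}d$ and right inverse $d(ad)^{-1}$, so $a$ is invertible, and then $d=(da)\,a^{-1}$ is a product of units, hence invertible. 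This is Part~(A).

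For Part~(B), I would return to $ab=qba$: then $ab-ba=(q-1)\,ba$, so $(q-1)\,ba=[a,b]\in\Rad B$, and since $q\ne1$ we get $ba\in\Rad B$. As $\Rad B$ is a two-sided ideal and $a$ is invertible by Part~(A), $b=(ba)\,a^{-1}\in\Rad B$, so $b$ is nilpotent; the relation $ac=qca$ yields $c\in\Rad B$ nilpotent in exactly the same way.

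There is no genuine obstacle here; the proof is a short sequence of manipulations inside $\Rad B$. The only things to keep track of are that the hypothesis $q\ne\pm1$ is used in precisely two spots --- to ensure $q-q^{-1}\ne0$ when extracting $bc$, and $q-1\ne0$ when extracting $b$ and $c$ --- and that Part~(A) must be in hand before invertibility of $a$ is invoked in Part~(B), so that the argument is not circular.
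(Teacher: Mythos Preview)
Your proof is correct and, for Part~(A), identical in substance to the paper's. For Part~(B) there is a small but pleasant difference worth noting: the paper first observes (via Lemma~\ref{xynil}) that $ba$ is nilpotent and then proves the inductive identity $(ba)^n a^{-n}=q^{n(n-1)/2}b^n$ to transfer nilpotency from $ba$ to $b$, whereas you bypass the induction entirely by using that $\Rad B$ is a two-sided ideal, so $b=(ba)a^{-1}\in\Rad B$ directly. Your route is shorter; the paper's route has the minor virtue of giving an explicit nilpotency index for $b$ in terms of that of $ba$, but neither argument avoids invoking the invertibility of $a$ from Part~(A).
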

\begin{proof}
(A)~By Theorem~\ref{RadPG}, every commutator belongs to the radical. In particular, $bc=(q-q^{-1})^{-1} [a,d]$ and so belongs to the radical; cf. Lemma~\ref{xynil}.
Then $bc$ is nilpotent by Theorem~\ref{RadPG}. Hence $1+q^{-1}bc$ and $1+qbc$ are invertible. It follows from~\eqref{SL2qrel3} that $a$ and $d$ are also invertible.

(B)~It is easy to check by induction that $(ba)^n a^{-n}=q^{n(n-1)/2}b^n$ for every $n\in\mathbb{N}$. By Lemma~\ref{xynil}, $ba$ is  nilpotent and hence so is $b$. Similarly, we obtain that $c$ is also nilpotent.
\end{proof}

The following lemma is obtained by a simple calculation.

\begin{lm}\label{onlyabc}
Let $a$, $b$ and $c$ be elements of some real algebra and satisfy the relations in~\eqref{SL2qrel1}. If $a$ is invertible and $d\!: =a^{-1}(1+ qbc)$, then the relations in~\eqref{SL2qrel2} and~\eqref{SL2qrel3} also hold.
\end{lm}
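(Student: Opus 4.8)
The plan is to prove the lemma by a direct computation, substituting $d=a^{-1}(1+qbc)$ into each of the four relations in \eqref{SL2qrel2}--\eqref{SL2qrel3} and checking them with nothing but the relations \eqref{SL2qrel1} and the invertibility of $a$. Before doing so, I would record three consequences of \eqref{SL2qrel1} that get used repeatedly. First, multiplying $ab=qba$ and $ac=qca$ on both sides by $a^{-1}$ gives $ba^{-1}=qa^{-1}b$ and $ca^{-1}=qa^{-1}c$. Second, since $bc=cb$, both $b$ and $c$ commute with the element $bc$ (for instance $b(bc)=b^2c=bcb=(bc)b$, and similarly for $c$). Third — and this is the one spot requiring a small computation — one has $a(bc)=q^2(bc)a$, because $a(bc)=(ab)c=q(ba)c=qb(ac)=q^2b(ca)=q^2(bc)a$; multiplying this on the left by $a^{-1}$ yields the form actually needed, $a^{-1}(bc)a=q^{-2}bc$.

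With these in hand the four relations drop out. The relation $ad-qbc=1$ is immediate from the definition, since $ad=a\,a^{-1}(1+qbc)=1+qbc$. For $da-q^{-1}bc=1$, compute $da=a^{-1}(1+qbc)a=1+q\,a^{-1}(bc)a=1+q\cdot q^{-2}bc=1+q^{-1}bc$, using the third preliminary observation. For $bd=qdb$, write $bd=b\,a^{-1}(1+qbc)=(ba^{-1})(1+qbc)=qa^{-1}b(1+qbc)=qa^{-1}(1+qbc)b=qdb$, where the first step uses $ba^{-1}=qa^{-1}b$ and the penultimate step uses that $b$ commutes with $bc$. The relation $cd=qdc$ is obtained in exactly the same way, with $ca^{-1}=qa^{-1}c$ and the fact that $c$ commutes with $bc$.

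I do not expect a genuine obstacle here: the statement is a bookkeeping exercise in the defining relations. The only things to be careful about are tracking the powers of $q$ correctly and noting that the identity $a^{-1}(bc)a=q^{-2}bc$ has to be \emph{derived} from \eqref{SL2qrel1} (via $a(bc)=q^2(bc)a$) rather than taken for granted, since $a$ does not commute with $bc$.
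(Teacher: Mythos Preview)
Your argument is correct and is exactly the ``simple calculation'' the paper alludes to without writing out; the paper gives no proof beyond that phrase, so there is nothing to compare. Your three preliminary identities ($ba^{-1}=qa^{-1}b$, $ca^{-1}=qa^{-1}c$, and $a^{-1}(bc)a=q^{-2}bc$) are the right bookkeeping, and each of the four relations then follows as you say.
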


Denote by $A_q$ the universal real associative algebra with generators $a$, $a^{-1}$, $b$, $c$ and the relations in~\eqref{SL2qrel1}. Combining Lemmas~\ref{adinbcnil} and~\ref{onlyabc} we get the following assertion.

\begin{co}\label{isoAq}
The obviously defined homomorphism ${\mathcal R}(SL_q(2,\mathbb{R}))\to A_q$ induces a topological isomorphism
${{\mathcal R}(SL_q(2,\mathbb{R}))\sphat}{\,\,}^{\,\mathsf{PG}}\to \widehat A_q^{\,\mathsf{PG}}$.
\end{co}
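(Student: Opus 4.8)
The plan is to observe that, as far as homomorphisms into objects of $\mathsf{PG}$ (hence of $\mathsf{PGL}$) are concerned, nothing changes on passing from $\cR:=\cR(SL_q(2,\R))$ to $A_q$, and then to invoke the Yoneda lemma. Let $\psi\!:\cR\to A_q$ be the obvious homomorphism; it is well defined because sending the generator $d$ of $\cR$ to $a^{-1}(1+qbc)\in A_q$ respects \eqref{SL2qrel1} trivially and \eqref{SL2qrel2}--\eqref{SL2qrel3} by Lemma~\ref{onlyabc}. Since $\cR$ and $A_q$ carry the strongest locally convex topology, every homomorphism out of them into a locally convex algebra is automatically continuous, and by functoriality of the enveloping functor $\psi$ induces a continuous homomorphism $\alpha\!:\wh\cR^{\,\mathsf{PG}}\to\wh A_q^{\,\mathsf{PG}}$ with $\alpha\,\io_\cR=\io_{A_q}\psi$; this is the map in the statement, and the goal is to prove that it is a topological isomorphism.

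The key step is that for every $B\in\mathsf{PG}$ precomposition with $\psi$ is a bijection $\psi^*\!:\Hom(A_q,B)\to\Hom(\cR,B)$, clearly natural in $B$. Injectivity in fact holds for arbitrary $B$: since $a$ is invertible in $A_q$, two homomorphisms $A_q\to B$ that agree after composition with $\psi$ agree on $a,b,c$, hence on $a^{-1}$, hence everywhere. For surjectivity take $\phi\!:\cR\to B$; by Lemma~\ref{adinbcnil} the element $\phi(a)$ is invertible and $\phi(b),\phi(c)$ are nilpotent, so $a\mapsto\phi(a)$, $a^{-1}\mapsto\phi(a)^{-1}$, $b\mapsto\phi(b)$, $c\mapsto\phi(c)$ defines a homomorphism $\tilde\phi\!:A_q\to B$ (the relations \eqref{SL2qrel1} being inherited from $\cR$), and $\tilde\phi\psi=\phi$ since the two sides agree on $a,b,c$ and $\tilde\phi\psi(d)=\phi(a)^{-1}(1+q\phi(b)\phi(c))=\phi(d)$ by the relation $ad-qbc=1$ in $\cR$.

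It remains to upgrade this to an isomorphism of envelopes. First, $\psi^*$ extends to all of $\mathsf{PGL}$: any $E\in\mathsf{PGL}$ is a projective limit in $\mathsf{TA}$ of a system $(B_i)$ with $B_i\in\mathsf{PG}$, so $\Hom(\cR,E)$ and $\Hom(A_q,E)$ are the corresponding limits of $\Hom(\cR,B_i)$ and $\Hom(A_q,B_i)$, and passing to the limit shows that $\psi^*\!:\Hom(A_q,E)\to\Hom(\cR,E)$ is a natural bijection for every $E\in\mathsf{PGL}$. Composing it with the adjunction defining the enveloping functor, applied to $A_q$ and to $\cR$, gives a chain of bijections $\Hom_{\mathsf{PGL}}(\wh A_q^{\,\mathsf{PG}},E)\cong\Hom(A_q,E)\cong\Hom(\cR,E)\cong\Hom_{\mathsf{PGL}}(\wh\cR^{\,\mathsf{PG}},E)$ natural in $E\in\mathsf{PGL}$; by the Yoneda lemma the composite is represented by an isomorphism $\wh\cR^{\,\mathsf{PG}}\to\wh A_q^{\,\mathsf{PG}}$, and tracing $\id$ through the chain (namely $\id\mapsto\io_{A_q}\mapsto\io_{A_q}\psi\mapsto\widehat{\io_{A_q}\psi}$) identifies this isomorphism with $\alpha$. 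One could equally argue without Yoneda: realise $\wh\cR^{\,\mathsf{PG}}$ as a projective limit of algebras $D_i\in\mathsf{PG}$ with dense image of $\cR$, lift each map $\cR\to D_i$ through $\psi$ to obtain a structure map $A_q\to\wh\cR^{\,\mathsf{PG}}$, check that the resulting pair satisfies the universal property of the envelope of $A_q$ (by Remark~\ref{simpdef} it suffices to test against $\mathsf{PG}$), and appeal to uniqueness of envelopes (Proposition~\ref{exisenv}). Either way the argument is soft once Lemmas~\ref{adinbcnil} and~\ref{onlyabc} are available; the only point that needs care is the passage from $\mathsf{PG}$ to $\mathsf{PGL}$ for $\psi^*$, which is routine, so I do not expect a real obstacle here.
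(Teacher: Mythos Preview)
Your proof is correct and follows essentially the same route as the paper: the paper's own argument is the single sentence ``Combining Lemmas~\ref{adinbcnil} and~\ref{onlyabc} we get the following assertion,'' and your write-up is a careful unpacking of exactly that implication via the universal property (with the Yoneda step making explicit what the paper leaves to the reader). One small remark: in the surjectivity step you invoke nilpotency of $\phi(b),\phi(c)$ from Lemma~\ref{adinbcnil}, but only the invertibility of $\phi(a)$ is actually used there.
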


The relations in \eqref{SL2qrel1} easily imply that
$$
\{a^ib^jc^k;\quad i\in\mathbb{Z},\,j,k\in\mathbb{Z}_+\}
$$
is a PBW basis in $A_q$. Thus we immediately obtain the following result.

\begin{pr}
The obviously defined linear map
$$
\mathbb{R}[a,a^{-1}]\otimes \mathbb{R}[b,c]\to A_q
$$
is an isomorphism of vector spaces.
\end{pr}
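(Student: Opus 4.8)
The plan is to show that the linear map $\R[a,a^{-1}]\otimes\R[b,c]\to A_q$ sending $a^i\otimes b^jc^k$ to $a^ib^jc^k$ is a vector-space isomorphism by exhibiting that the monomials $a^ib^jc^k$ with $i\in\Z$ and $j,k\in\Z_+$ form a linear basis of $A_q$. Since the indicated map takes the obvious tensor basis $\{a^i\otimes b^jc^k\}$ (bijectively) to this set, establishing that the $a^ib^jc^k$ are linearly independent and spanning is exactly what is needed.

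First I would verify spanning. Any element of $A_q$ is a linear combination of words in the letters $a,a^{-1},b,c$. Using the relations in~\eqref{SL2qrel1} one moves all occurrences of $a^{\pm1}$ to the left of all occurrences of $b$ and $c$: each time an $a$ or $a^{-1}$ stands to the right of a $b$ or a $c$, the relation $ab=qba$ (equivalently $ba=q^{-1}ab$, and likewise $a^{-1}b=q^{-1}ba^{-1}$, etc.) or $ac=qca$ lets us swap them at the cost of a scalar $q^{\pm1}$; and $bc=cb$ lets us sort the $b$'s before the $c$'s. Since each swap strictly decreases the number of inversions (pairs that are out of order) and introduces no new letters, the process terminates and rewrites every word as a scalar times some $a^ib^jc^k$. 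Hence these monomials span $A_q$.

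Next I would verify linear independence, and this is the genuine (if mild) content of the statement. The clean way is to exhibit a faithful action, or to invoke the fact already recorded just above in the excerpt that $\{a^ib^jc^k : i\in\Z,\ j,k\in\Z_+\}$ is a PBW-basis of $A_q$; given that, there is nothing left to prove. If one instead wants a self-contained argument, $A_q$ is a localization of the quantum-affine-space-type algebra $\R\langle a,b,c\rangle$ with relations~\eqref{SL2qrel1} at the Ore set $\{a^n\}$ (which is Ore precisely because of those relations), and a standard diamond-lemma / Bergman argument (or the iterated Ore extension structure $\R[a^{\pm1}][c][b;\sigma]$ with $\sigma$ the automorphism $c\mapsto q^{-1}c$, $a\mapsto q^{-1}a$) shows the ordered monomials are a basis. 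In any case linear independence of $\{a^ib^jc^k\}$, combined with the spanning established above, means this set is a linear basis, so the map in question, carrying the tensor basis $\{a^i\otimes b^jc^k\}$ onto it bijectively, is a linear isomorphism.

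I expect the main (and essentially only) obstacle to be the linear-independence half: the spanning half is a mechanical normal-ordering argument, whereas independence requires either quoting the PBW/Ore property stated earlier or running a diamond-lemma confluence check on the overlaps among the relations $ab=qba$, $ac=qca$, $bc=cb$ together with $aa^{-1}=a^{-1}a=1$. Since the excerpt has already asserted the PBW-basis property immediately before the proposition, I would simply invoke it, making the proof a one-line deduction: the tensor basis maps bijectively to the PBW-basis, so the map is an isomorphism.
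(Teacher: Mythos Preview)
Your proposal is correct and matches the paper's approach exactly: the paper states immediately before the proposition that $\{a^ib^jc^k:\,i\in\Z,\,j,k\in\Z_+\}$ is a PBW-basis of $A_q$ and then writes ``Thus we immediately obtain the following result,'' which is precisely your one-line deduction that the tensor basis maps bijectively to the PBW-basis.
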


Taking the composition of the inverse to the previous map and the tensor product of natural the embeddings $\mathbb{R}[a,a^{-1}]\to C^\infty(\mathbb{R}^\times)$ and $\mathbb{R}[b,c]\to\mathbb{R}[[b,c]]$ we obtain an injective linear map
\begin{equation}\label{ioSL2q}
A_q\to C^\infty(\mathbb{R}^\times)\ptn \mathbb{R}[[b,c]].
\end{equation}
Denote $C^\infty(\mathbb{R}^\times)\ptn \mathbb{R}[[b,c]]$ by $C^\infty(SL_q(2,\mathbb{R}))$. (To show that this space can be endowed with a multiplication that depends on $q$, we again use the index $q$.)

\begin{thm}\label{PGSL2R}
Let $q\in\mathbb{R}\setminus\{0,1,-1\}$.

\emph{(A)}~The multiplication in $A_q$ determined by the relations in \eqref{SL2qrel1} extends to a continuous operation on $C^\infty(SL_q(2,\mathbb{R}))$ that turns it into a $\mathsf{PGL}$ algebra.

\emph{(B)}~Taking $C^\infty(SL_q(2,\mathbb{R}))$  with this multiplication, the embedding
$$
{\mathcal R}(SL_q(2,\mathbb{R}))\to C^\infty(SL_q(2,\mathbb{R}))
$$
is an envelope with respect to~$\mathsf {PG}$.
\end{thm}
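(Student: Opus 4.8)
The plan is to mirror the structure of the proof of Theorem~\ref{PGqupl}, adapting it to the four-generator situation via Corollary~\ref{isoAq}, which reduces everything to the algebra $A_q$ with generators $a,a^{-1},b,c$ and relations~\eqref{SL2qrel1}. For Part~(A), by Corollary~\ref{maniTpPGL} it suffices to exhibit a family of homomorphisms from $A_q$ into algebras of the form $C^\infty(M,\rT_p)$ whose product, after restriction, extends to a topologically injective continuous linear map on $C^\infty(\R^\times)\ptn\R[[b,c]]$. Since $u:=bc$ is central in $A_q$ (by the third relation in~\eqref{SL2qrel1}), the monomials $a^i b^j c^k$ reorganize, as in the quantum-plane case, into $a^i u^j$, $b^i u^j$, $c^i u^j$ type terms, so $C^\infty(\R^\times)\ptn\R[[b,c]]$ can be rewritten with a formal variable $u$ playing the role of $bc$; I would set up the explicit linear isomorphism $A_q\cong (\text{something})\otimes\R[u]$ first, paralleling the displayed bases before Theorem~\ref{PGqupl}.

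The concrete representations I would use are built from the same triangular blocks $K_p,E_p$ from~\eqref{defKnEn}. Because $K_pE_p=qE_pK_p$, one can send $a\mapsto\la K_p$ (with $\la$ invertible, i.e.\ ranging over $\R^\times$, so that $a^{-1}\mapsto\la^{-1}K_p^{-1}$ makes sense), $b\mapsto E_p$, $c\mapsto$ a scalar multiple of $E_p$ or of another nilpotent triangular matrix, checking that $ab=qba$, $ac=qca$, $bc=cb$ all hold; several such families $\pi_{p,\la}$, $\pi'_{p,\la}$ (varying which generator gets the invertible diagonal part and which gets the nilpotent part, and possibly using $K_p^{-1}$) are needed so that their joint kernel is trivial on the level of the relevant coordinate directions. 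Treating $\la$ as a variable gives homomorphisms $A_q\to\rT_p(C^\infty(\R^\times))$. Restricting to $\R[a,a^{-1}]\otimes\R[u]$ and to the analogous nilpotent-generated pieces, and using Theorem~\ref{fucani} (since $E_p$ is nilpotent and $\la K_p$ is of polynomial growth over $\R^\times$), these restrictions extend to continuous linear maps $C^\infty(\R^\times)\ptn\R[[u]]\to\rT_p(C^\infty(\R^\times))$. As in the quantum-plane proof, I would then project onto the upper-right corner entries of the matrix powers, observe that $\pi_{p,\la}(u)^r$ has nonzero upper-right entry only for $r=p-1$, and thereby decompose the resulting map into a product (over $r$) of maps $C^\infty(\R^\times)\to C^\infty(\R^\times)$ (or products of finitely many copies), each a composition of a diffeomorphism-induced isomorphism, multiplication by a fixed polynomial (closed range by Proposition~\ref{fgclid}), and a finite-rank perturbation — hence topologically injective by the inverse mapping theorem for Fr\'echet spaces and Lemma~\ref{auxtopin}.

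For Part~(B), given a Banach algebra $B$ of polynomial growth and a homomorphism $\phi\!:\cR(SL_q(2,\R))\to B$, Corollary~\ref{isoAq} lets me work with $\phi\!:A_q\to B$ instead. By Lemma~\ref{adinbcnil}, $\phi(a)$ is invertible and $\phi(b),\phi(c)$ are nilpotent, so $\phi(u)=\phi(b)\phi(c)$ is nilpotent. Let $\te_a\!:C^\infty(\R^\times)\to B$ be the functional calculus for $\phi(a)$ (via Proposition~\ref{invCinfc}/Proposition~\ref{svcommfc}), and extend $\R[b,c]\to B$ to a continuous linear map $\R[[b,c]]\to B$ using the nilpotency of $\phi(b),\phi(c)$ (a special case of Theorem~\ref{fucani}). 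Composing the tensor product $\te_a\ptn(\text{that map})$ with the linearized multiplication $B\ptn B\to B$ yields a continuous linear map $\wh\phi\!:C^\infty(\R^\times)\ptn\R[[b,c]]\to B$; one checks $\wh\phi\io=\phi$ on the PBW basis, that $\wh\phi$ is multiplicative (using that $\io$ is already known to be a homomorphism from Part~(A) and that $\phi$ respects the relations), and uniqueness follows since $A_q$ is dense in $C^\infty(\R^\times)\ptn\R[[b,c]]$. Then Remark~\ref{simpdef} gives the envelope property.

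The main obstacle I anticipate is Part~(A): specifically, selecting enough triangular representations so that the induced map on the corner entries genuinely separates all of $C^\infty(\R^\times)\ptn\R[[b,c]]$ — with three generators $a,b,c$ (two of them nilpotent and not on equal footing, since $b$ and $c$ commute but play asymmetric roles relative to $a$), the bookkeeping of which monomials $a^i b^j c^k$ land in which corner of which representation is more delicate than in the quantum-plane case, and one must be careful that the shift/diffeomorphism/polynomial-multiplication decomposition still applies cleanly over $\R^\times$ rather than $\R$ (in particular that the relevant diffeomorphisms $\la\mapsto\la q^r$ preserve $\R^\times$, which they do). Verifying that the explicit corner-entry formula still has closed range via Proposition~\ref{fgclid} on $\R^\times$ (or after passing to a product of copies of $C^\infty(\R^\times)$ indexed by $j,k$) is where the real work lies.
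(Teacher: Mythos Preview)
Your plan for Part~(B) is essentially correct and matches the paper: the paper isolates the extension step as a separate lemma (Lemma~\ref{Aqext}), showing that any homomorphism $A_q\to B$ with $B\in\mathsf{PGL}$ extends to a continuous linear map on $C^\infty(\R^\times)\ptn\R[[b,c]]$, using Proposition~\ref{invCinfc} for~$a$ and the nilpotency of $\phi(b),\phi(c)$ (Lemma~\ref{adinbcnil}) together with Theorem~\ref{fucani} for the $\R[[b,c]]$ factor. Density and Part~(A) then finish exactly as you describe.

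Part~(A), however, has a real gap. First, a small but telling slip: $u=bc$ is \emph{not} central in $A_q$; from~\eqref{SL2qrel1} one gets $au=q^2ua$. More importantly, your representations carry only the single continuous parameter $\la\in\R^\times$, with $b$ and $c$ sent to fixed (or discretely varying) scalar multiples of~$E_p$. This cannot separate $\R[[b,c]]$: if $b\mapsto E_p$ and $c\mapsto\al E_p$ for a fixed~$\al$, then every monomial $b^ic^j$ with $i+j=p-1$ contributes to the same upper-right corner with weight~$\al^j$, so a finite collection of such families produces only finitely many linear functionals on the $(r+1)$-dimensional slice $\{(i,j):i+j=r\}$. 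The $u$-reorganization does not cure this: you still have to distinguish all the $b^mu^n$ and $c^{m'}u^{n'}$, and the target maps you write land in $C^\infty(\R^\times)$, which is too small.

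The paper's remedy is to promote the scalars to \emph{two further continuous parameters} $\mu,\nu\in\R$, setting
\[
\pi_{p,\la,\mu,\nu}\!:a\mapsto\la K_p,\quad b\mapsto\mu E_p,\quad c\mapsto\nu E_p,
\]
so that the target is $\rT_p(C^\infty(\R^\times\times\R^2))$ rather than $\rT_p(C^\infty(\R^\times))$. The upper-right entry of $\pi_{p,\la,\mu,\nu}\bigl(\sum_{i,j}f_{ij}(a)b^ic^j\bigr)$ is then $\sum_{i+j=p-1}f_{ij}(\la q^{p-1})\mu^i\nu^j$, and the resulting map
\[
C^\infty(\R^\times)^{r+1}\to C^\infty(\R^\times\times\R^2),\qquad (f_{ij})_{i+j=r}\mapsto\Bigl((\la,\mu,\nu)\mapsto\sum_{i+j=r} f_{ij}(\la)\mu^i\nu^j\Bigr),
\]
is injective with closed range (Proposition~\ref{fgclid}), hence topologically injective by the open mapping theorem; one then composes with the diffeomorphism $\la\mapsto\la q^{p-1}$ and invokes Lemma~\ref{auxtopin}. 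No $u$-reorganization is used: the paper works directly with the bigrading $b^ic^j$, and the monomials $\mu^i\nu^j$ in the enlarged target do the separating that your discrete families cannot.
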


The first step of the proof is the following lemma.
\begin{lm}\label{Aqext}
Every homomorphism from $A_q$ to a $\mathsf{PGL}$ algebra extends to a continuous linear map from $C^\infty(SL_q(2,\mathbb{R}))$.
\end{lm}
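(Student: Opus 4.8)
The plan is to mimic the structure used in the proof of Theorem~\ref{PGqupl}(B): given a homomorphism $\phi\!:A_q\to C$ into a $\mathsf{PGL}$ algebra, I will build $\wh\phi$ out of functional calculi on the invertible generator $a$ and an ordered-calculus map on the nilpotent generators $b,c$, then check continuity using the fact that the relevant polynomial-multiplication operators have closed range (Proposition~\ref{fgclid}). First I would reduce to the case of a Banach algebra of polynomial growth: since $C$ is a projective limit of algebras in $\mathsf{PG}$, it suffices to construct the extension compatibly for each such quotient, and the universal property of the projective limit assembles these into a map on $C$. So assume $C=B\in\mathsf{PG}$ and $\phi\!:A_q\to B$ is a homomorphism.

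Next I would extract the structural data. By Lemma~\ref{adinbcnil}, $\phi(b)$ and $\phi(c)$ are nilpotent in $B$, while $\phi(a)$ is invertible (it is already invertible in $A_q$, being a generator with a named inverse, so $\phi(a)^{-1}=\phi(a^{-1})$), and $0\notin\Sp\phi(a)$. Let $\te_a\!:C^\infty(\R^\times)\to B$ be the $C^\infty$-functional calculus for $\phi(a)$ provided by Proposition~\ref{invCinfc} / Proposition~\ref{svcommfc}, and let $\beta\!:\R[[b,c]]\to B$ be the continuous linear extension of $b\mapsto\phi(b)$, $c\mapsto\phi(c)$ of the algebra map on the free nilpotent-generated piece; here one should note that $bc=cb$ in $A_q$, so $\R[[b,c]]$ is the commutative formal power series algebra and the existence of $\beta$ follows from Theorem~\ref{fucani} (with $k=0$) applied to the commuting nilpotents $\phi(b),\phi(c)$ — actually, since the monomials $a^ib^jc^k$ form a PBW basis, one should be slightly careful and define $\beta$ on $\R[[b,c]]$ directly via the nilpotency bound. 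Then set
\begin{equation*}
\wh\phi\!:C^\infty(\R^\times)\ptn\R[[b,c]]\xrightarrow{\te_a\ptn\beta} B\ptn B\xrightarrow{m_B} B,
\end{equation*}
the composition with the linearized multiplication. One checks $\wh\phi\circ\io=\phi$ on the PBW basis, where $\io$ is the map in~\eqref{ioSL2q}: indeed $\io(a^ib^jc^k)=(a^i)\otimes(b^jc^k)$ maps to $\te_a(a^i)\beta(b^jc^k)=\phi(a)^i\phi(b)^j\phi(c)^k=\phi(a^ib^jc^k)$.

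The continuity of $\wh\phi$ is where the work lies, and it is the same obstacle as in Theorem~\ref{PGqupl}. The tensor-product map $\te_a\ptn\beta$ is continuous because each factor is, and $m_B$ is continuous (separately continuous multiplication on a Banach algebra is jointly continuous), so in fact $\wh\phi$ is automatically continuous as a composition of continuous maps; the genuine subtlety, and the reason the analogous Theorem~\ref{PGqupl}(A) needed Proposition~\ref{fgclid}, is \emph{Part (A) of the forthcoming theorem} — showing that $\io$ is a \emph{topological} embedding and that the algebra structure it pulls back extends continuously — rather than this lemma. For the present lemma the main point to verify carefully is that $\beta$ genuinely extends to $\R[[b,c]]$: since $\phi(b),\phi(c)$ are nilpotent and commute, all but finitely many monomials $b^jc^k$ act as $0$, so $\beta$ is defined on the quotient $\R[b,c]/(b^N,c^N)$ for suitable $N$ and then extended by zero — this is the special case $k=0$ of Theorem~\ref{fucani}, so no estimation is needed. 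I would therefore present this lemma as a short consequence of Theorem~\ref{fucani} and the functional calculus of Proposition~\ref{invCinfc}, deferring all seminorm-estimation difficulties to the proof of Theorem~\ref{PGSL2R}(A), exactly as the paper organizes the quantum-plane argument.
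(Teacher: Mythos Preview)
Your proposal is correct and follows essentially the same route as the paper: reduce to $B\in\mathsf{PG}$, use Proposition~\ref{invCinfc} for the $C^\infty(\R^\times)$ factor, use Theorem~\ref{fucani} (with $k=0$) on the nilpotents $\phi(b),\phi(c)$ for the $\R[[b,c]]$ factor, and compose the tensor product with the multiplication map $B\ptn B\to B$. The detour through Proposition~\ref{fgclid} in your opening plan is unnecessary here --- as you yourself note later, continuity of $\wh\phi$ is automatic from the continuity of each factor and of $m_B$, and closed-range arguments only enter in Part~(A) of Theorem~\ref{PGSL2R}; also, Theorem~\ref{fucani} does not require $\phi(b),\phi(c)$ to commute, so that remark can be dropped.
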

\begin{proof}
Let $B$ be a $\mathsf{PGL}$ algebra and  $\varphi\!:A_q\to B$ a homomorphism. We can assume that $B$ is a Banach algebra of polynomial growth. Denote by $\varphi_a$ and $\varphi_{b,c}$  the corresponding homomorphisms from $\mathbb{R}[a,a^{-1}]$ and $\mathbb{R}[b,c]$ to $B$.

By Proposition~\ref{invCinfc}, $\varphi_a$ extends to a continuous homomorphism $C^\infty(\mathbb{R}^\times)\to B$. On the other hand,
Lemma~\ref{adinbcnil} implies that $\varphi(b)$ and $\varphi(c)$ are nilpotent. So by Theorem~\ref{fucani}, the linear map $\varphi_{b,c}\!:\mathbb{R}[b,c]\to B\!:b^ic^j\mapsto \varphi(b^ic^j)$ extends to a continuous linear map $\mathbb{R}[[b,c]]\to B$. (Actually, this map is a homomorphism but we do not need this fact here.)
The composition
$$
C^\infty(\mathbb{R}^\times)\ptn \mathbb{R}[[b,c]]\to B\ptn B\to B
$$
of the tensor product of these two maps and the linearization of the multiplication in~$B$ is the desired extension.
\end{proof}

\begin{proof}[Proof of Theorem~\ref{PGSL2R}]
(A)~By  Corollary~\ref{maniTpPGL}, it suffices to construct a homomorphism from $A_q$ to a product of algebras of the form $C^\infty(M,{\mathrm T}_{p})$ that extends to a topologically injective continuous linear map defined on $C^\infty(SL_q(2,\mathbb{R}))$.

We again use the matrices $E_p$ and $K_p$ introduced in~\eqref{defKnEn}. It is easy to see from the relation $K_pE_p=qE_pK_p$ that for $p\in\mathbb{N}$, $\lambda\in\mathbb{R}^\times$ and $\mu,\nu\in\mathbb{R}$ the correspondence
$$
\pi_{p,\lambda,\mu,\nu}\!:a\mapsto \lambda K_p,\,b\mapsto \mu E_p,\,c\mapsto \nu E_p
$$
determines a representation of~$A_q$.

Fix $p$ temporarily.
Write every element of $A_q$ as $x=\sum_{i,j\in\mathbb{Z}_+} f_{ij}(a)b^jc^j$, where $f_{ij}$ are Laurent polynomials. Then $\pi_{p,\lambda,\mu,\nu}(x)$ equals
\begin{equation}\label{SLmat}
\begin{pmatrix}
 f_{00}(\lambda q^{p-1}) &f_{10}(\lambda q^{p-1})\mu+ f_{01}(\lambda q^{p-1})\nu   &&\ldots&\sum_{i+j=p-1} f_{ij}(\lambda q^{p-1})\mu^i\nu^j \\
 0 &f_{00}(\lambda q^{p-2})&\ddots&\ldots&\\
 \vdots &\ddots &\ddots&\ddots&\vdots\\
 0 &0 &\ddots&f_{00}(\lambda q)& f_{10}(\lambda q)\mu+ f_{01}(\lambda q)\nu \\
 0 &0 &\ldots&0&f_{00}(\lambda)\\
   \end{pmatrix}.
\end{equation}

Treating $\lambda,\mu,\nu$ as variables, we have a homomorphism $A_q\to{\mathrm T}_p(C^\infty(\mathbb{R}^\times\times \mathbb{R}^2))$, which we denote by $\widetilde\pi_p$. Since ${\mathrm T}_p(C^\infty(\mathbb{R}^\times\times \mathbb{R}^2))$ is a $\mathsf{PGL}$ algebra by Corollary~\ref{maniTpPGL}, it follows from
Lemma~\ref{Aqext} that $\widetilde\pi_p$  extends to a continuous linear map
$$
C^\infty(SL_q(2,\mathbb{R}))\to {\mathrm T}_p(C^\infty(\mathbb{R}^\times\times \mathbb{R}^2)).
$$

We can identify $C^\infty(\mathbb{R}^\times)\ptn \mathbb{R}[[b,c]]$ with a power of $C^\infty(\mathbb{R}^\times)$, where the copies labelled by $\mathbb{Z}_+^2$. Then $\widetilde\pi_p$ maps a double sequence $(f_{ij})$ to the matrix as in \eqref{SLmat}. Partition $\mathbb{Z}_+^2$ into the union of subsets given by the condition $i+j=r$ and write $C^\infty(\mathbb{R}^\times)\ptn \mathbb{R}[[b,c]]$ as $\prod_{r=0}^\infty  C^\infty(\mathbb{R}^\times)^r$. Consider the  map
$$
\rho\!:\prod_{r=0}^\infty C^\infty(\mathbb{R}^\times)^r\to \prod_{p=1}^\infty {\mathrm T}_p(C^\infty(\mathbb{R}^\times\times \mathbb{R}^2))
$$
induced by the sequence  $(\widetilde\pi_p)$.
Identifying ${\mathrm T}_p(C^\infty(\mathbb{R}^\times\times \mathbb{R}^2))$ with a finite power of $C^\infty(\mathbb{R}^\times\times \mathbb{R}^2)$, we can write the space on the right-hand side as a power of $C^\infty(\mathbb{R}^\times\times \mathbb{R}^2)$. We now select only those copies that correspond to upper right entries of the matrices. By Part~(B) of Lemma~\ref{auxtopin}, to show that $\rho$ is topologically injective it suffices to check that the map
\begin{equation}\label{SLpartrho}
\prod_r C^\infty(\mathbb{R}^\times)^r\to C^\infty(\mathbb{R}^\times\times \mathbb{R}^2)^{\mathbb{Z}_+}\!:(f_{ij})_{i+j=r}\mapsto ((\lambda,\mu,\nu)\mapsto \sum_{i+j+1=p} f_{ij}(\lambda q^{p-1})\mu^i\nu^j)_p
\end{equation}
is topologically injective.

Note that the map $C^\infty(\mathbb{R}^\times)^r\to C^\infty(\mathbb{R}^\times\times \mathbb{R}^2)$ that takes the tuple $(f_{ij})_{i+j=r}$ to the function $(\lambda,\mu,\nu)\mapsto \sum f_{ij}(\lambda)\mu^i\nu^j$ is obviously injective.  Moreover, it has closed range by Proposition~\ref{fgclid} and hence it is
topologically injective by the inverse mapping theorem for Fr\'echet spaces. Since $\lambda\to\lambda q^{p-1}$ is a diffeomorphism of $\mathbb{R}^\times$, each factor in~\eqref{SLpartrho} is also topologically injective and hence so is the product of the maps by Part~(A) of Lemma~\ref{auxtopin}. Thus $\rho$ is in turn topologically injective.

Note finally that the restriction of $\rho$ to $A_q$ is a homomorphism and this completes the proof.

(B)~Corollary~\ref{isoAq} implies that we can replace ${\mathcal R}(SL_q(2,\mathbb{R}))$ by $A_q$. Let $B$ be a Banach algebra of polynomial growth and  $\varphi\!:A_q\to B$ a homomorphism. By Lemma~\ref{Aqext}, $\varphi$ extends to a continuous linear map
$\widehat\varphi\!:C^\infty(SL_q(2,\mathbb{R}))\to B$. It follows from Part~(A) that $\iota$ in~\eqref{ioSL2q} is a homomorphism and $\widehat\varphi$ is a continuous homomorphism with respect to the multiplication extended from $A_q$. Since $A_q$ is dense in $C^\infty(SL_q(2,\mathbb{R}))$, such a homomorphism is unique. Thus, the universal property in Remark~\ref{simpdef} holds and so $A_q\to C^\infty(SL_q(2,\mathbb{R}))$ is an envelope with respect to~$\mathsf{PG}$.
\end{proof}

\subsection*{Envelopes of universal enveloping algebras of finite-dimensional Lie algebras}
We begin with formulating a result from \cite{ArOld} that was the starting point for writing this article. Recall that a finite-dimensional real Lie algebra $\mathfrak{g}$ is said to be \emph{triangular} if it is solvable and for every $x\in\mathfrak{g}$ all the eigenvalues of the linear operator~$\ad x$ belong to~$\mathbb{R}$.

Let $\mathfrak{g}$ be a triangular finite-dimensional real Lie algebra. Fix  a linear basis $e_{k+1},\ldots, e_m$ in~$[\mathfrak{g},\mathfrak{g}]$ and its complement $e_1,\ldots, e_k$ up to a linear basis in~$\mathfrak{g}$ and consider the Fr\'echet space
\begin{equation}\label{Cinffgdef}
C^\infty_\mathfrak{g}\!:=C^\infty(\mathbb{R}^k)\ptn \mathbb{R}[[e_{k+1},\ldots,e_m]].
\end{equation}
Consider the corresponding Poincar\'{e}--Birkoff--Witt basis $\{e^\alpha\!:=e_1^{\alpha_1}\cdots e_m^{\alpha_m}\!:\,\alpha\in \mathbb{Z}_+^m\}$ in $U (\mathfrak{g})$ and the linear map $U(\mathfrak{g})\to C^\infty_\mathfrak{g}$ with dense image given by identifying $e_1,\ldots, e_k$ with the coordinate functions on~$\mathbb{R}^k$ and the embedding $\mathbb{R}[e_{k+1},\ldots,e_m]\to \mathbb{R}[[e_{k+1},\ldots,e_m]]$. The following result combines Theorems~4.3 and~4.4 in~\cite{ArOld}.

\begin{thm}\label{PGtrifdL}
Let $\mathfrak{g}$ be a triangular finite-dimensional real Lie algebra.

\emph{(A)}~The multiplication in $U(\mathfrak{g})$ extends to a continuous operation on $C^\infty_\mathfrak{g}$, which makes it a $\mathsf{PGL}$ algebra,

\emph{(B)}~Taking $C^\infty_ \mathfrak{g}$  with this multiplication, the embedding $U(\mathfrak{g})\to C^\infty_ \mathfrak{g}$ is an envelope with respect to~$\mathsf{PG}$.

\emph{(C)}~The algebra $C^\infty_\mathfrak{g}$ is independent of the choice of a basis in $[\mathfrak{g},\mathfrak{g}]$ and its complement to a basis in~$\mathfrak{g}$.
\end{thm}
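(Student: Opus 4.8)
The plan is to follow the indirect argument used for Theorems~\ref{PGqupl} and~\ref{PGSL2R}: realize $C^\infty_\fg$ as a closed subalgebra of a product of algebras $\rT_p(C^\infty(M))$ by means of a family of finite-dimensional triangular representations together with the ordered functional calculus of Theorem~\ref{fucani}, and then read off the universal property from the density of $U(\fg)$.

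For Part~(A), by Corollary~\ref{maniTpPGL} it suffices to build a homomorphism from $U(\fg)$ into a product of algebras $\rT_p(C^\infty(M))$ and extend it to a topologically injective continuous linear map on $C^\infty_\fg$. Write $\fn\!:=[\fg,\fg]$, a nilpotent ideal, and keep the PBW basis $\{e^\al\}$ adapted to the splitting $\fg=\spn\{e_1,\dots,e_k\}\oplus\fn$. For each $p\in\N$ and each character $\la\in\R^k$ of the abelian quotient $\fg/\fn$ I would produce a triangular representation of $\fg$ of size $p$ in which $\fn$ acts by strictly upper triangular matrices and $e_1,\dots,e_k$ act by matrices of polynomial growth with diagonal part $\la$ --- say, from a truncation of the regular representation, or from a tensor power of the adjoint representation, twisted by~$\la$; producing a family of such representations rich enough for what follows is the Lie-theoretic heart of the matter, carried out in \cite[Theorem~4.3]{ArOld}. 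Letting $\la$ vary over $\R^k$ turns each such representation into a homomorphism $\wt\pi_p\colon U(\fg)\to\rT_p(C^\infty(\R^k))$; since $e_{k+1},\dots,e_m$ go to nilpotent matrices and $e_1,\dots,e_k$ to matrices of polynomial growth, Theorem~\ref{fucani} extends the restriction of $\wt\pi_p$ to PBW monomials to a continuous linear map $C^\infty_\fg\to\rT_p(C^\infty(\R^k))$. Assembling over $p$ gives a continuous linear map $\rho\colon C^\infty_\fg\to\prod_p\rT_p(C^\infty(\R^k))$ whose restriction to $U(\fg)$ is a homomorphism. To finish, I would grade $C^\infty_\fg$ by total degree in $e_{k+1},\dots,e_m$ and retain from the target only the top-right entry of each block; by Part~(B) of Lemma~\ref{auxtopin} this reduces topological injectivity of $\rho$ to that of an explicit product (over the degree~$r$) of maps from finite powers of $C^\infty(\R^k)$ to $C^\infty(\R^k)$, each a composition of a diffeomorphism-induced isomorphism with a multiplication operator by a polynomial in the structure constants. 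Every factor is injective with closed range --- the multiplication operator by Proposition~\ref{fgclid} --- hence topologically injective by the inverse mapping theorem for Fr\'echet spaces, and so is the product by Part~(A) of Lemma~\ref{auxtopin}. Transporting the multiplication of $U(\fg)$ along the resulting closed embedding then makes $C^\infty_\fg$ a closed subalgebra of a product of algebras $\rT_p(C^\infty(\R^k))$, hence a $\mathsf{PGL}$ algebra by Corollary~\ref{maniTpPGL}, with continuous multiplication extending that of $U(\fg)$.

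For Part~(B), let $B\in\mathsf{PG}$ and $\phi\colon U(\fg)\to B$ a homomorphism. By the first part of Theorem~\ref{RadPG} the quotient $B/\Rad B$ is commutative, so every commutator of $B$ lies in $\Rad B$; since $\fn$ is spanned by commutators of $\fg$, this gives $\phi(e_j)\in\Rad B$ for $j>k$, and $\Rad B$ is nilpotent, so $\phi(e_{k+1}),\dots,\phi(e_m)$ are nilpotent, while $\phi(e_1),\dots,\phi(e_k)$ are of polynomial growth. Theorem~\ref{fucani} then extends the map $e^\al\mapsto\phi(e^\al)$ to a continuous linear $\wh\phi\colon C^\infty_\fg\to B$; since $C^\infty_\fg$ is a Fr\'echet algebra (Part~(A)) in which $U(\fg)$ is dense and $B$ is Banach, the closed set of pairs on which $\wh\phi$ is multiplicative contains $U(\fg)\times U(\fg)$ and hence all of $C^\infty_\fg\times C^\infty_\fg$, so $\wh\phi$ is a homomorphism, and it is the unique continuous one extending $\phi$ by density. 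With Part~(A) and Remark~\ref{simpdef} this shows $U(\fg)\to C^\infty_\fg$ is an envelope with respect to $\mathsf{PG}$. Part~(C) is then immediate: two choices of basis give, by Part~(B), two envelopes of $U(\fg)$, which are canonically isomorphic by the uniqueness in Proposition~\ref{exisenv}.

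The step I expect to be the main obstacle is the representation-theoretic core of Part~(A): exhibiting a family of finite-dimensional triangular representations ample enough that $\rho$ is topologically injective, and computing the relevant (top-right) matrix entries precisely enough to bring in Proposition~\ref{fgclid} and the open mapping theorem. This is precisely what \cite[Theorem~4.3]{ArOld} accomplishes; given it, everything else above is a formal consequence of the functional-calculus and closed-range tools already in place.
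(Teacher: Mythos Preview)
Your proposal is correct and takes essentially the same approach as the paper: the paper does not prove Theorem~\ref{PGtrifdL} in the text but simply states that it combines Theorems~4.3 and~4.4 of \cite{ArOld}, adding that ``the main idea of the proof is the same as in Theorems~\ref{PGqupl} and~\ref{PGSL2R}'' --- exactly the indirect route via triangular representations, Theorem~\ref{fucani}, Proposition~\ref{fgclid}, and Lemma~\ref{auxtopin} that you outline. Your deduction of Part~(C) from Part~(B) via the uniqueness of envelopes (Proposition~\ref{exisenv}) is the intended argument, and you have correctly identified that the representation-theoretic construction ensuring topological injectivity is the nontrivial core carried out in \cite{ArOld}.
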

The main idea of the proof is the same as in Theorems~\ref{PGqupl} and~\ref{PGSL2R} but the reasoning is cumbersome and contains many technicalities; for details see \cite{ArOld}.

We now turn to arbitrary finite-dimensional real Lie algebras.

\begin{pr}
Every finite-dimensional real Lie algebra has a maximal triangular quotient algebra.
\end{pr}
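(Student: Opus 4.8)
The statement asserts that among all triangular quotient Lie algebras of a finite-dimensional real Lie algebra $\fg$, there is a largest one. The natural approach is to identify the smallest ideal $\fj\subseteq\fg$ such that $\fg/\fj$ is triangular, and then show every triangular quotient factors through $\fg/\fj$. First I would check that the class of triangular quotients is closed under finite ``intersections'' in the following sense: if $\fj_1,\fj_2$ are ideals with $\fg/\fj_1$ and $\fg/\fj_2$ both triangular, then $\fg/(\fj_1\cap\fj_2)$ is triangular. Indeed, $\fg/(\fj_1\cap\fj_2)$ embeds into $\fg/\fj_1\times\fg/\fj_2$ as a subalgebra, so it suffices to observe that triangularity passes to subalgebras and finite products. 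Solvability is clearly inherited by subalgebras and extensions, hence by finite products; and the eigenvalue condition on $\ad x$ is inherited by subalgebras (the operator $\ad_{\fh} x$ is a restriction of $\ad_{\fg} x$ to an invariant subspace, whose eigenvalues are among those of $\ad_{\fg}x$) and by products (block-diagonal action). So the intersection property holds.

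Next I would define $\fj$ to be the intersection of the family of all ideals $\fj'$ with $\fg/\fj'$ triangular. Since $\fg$ is finite-dimensional, this family, ordered by reverse inclusion, has a minimal element obtained as a finite intersection (a descending chain of subspaces stabilizes), so by the intersection property $\fg/\fj$ is itself triangular, and $\fj$ is the minimum of the family. Then for any ideal $\fj'$ with $\fg/\fj'$ triangular we have $\fj\subseteq\fj'$, so the quotient map $\fg\to\fg/\fj'$ factors through $\fg\to\fg/\fj$; thus $\fg/\fj$ is the desired maximal triangular quotient, and it is unique since it dominates every triangular quotient.

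The only point requiring a little care — and what I expect to be the mild technical obstacle — is verifying that triangularity is stable under passing to subalgebras (needed to handle $\fg/(\fj_1\cap\fj_2)\hookrightarrow\fg/\fj_1\times\fg/\fj_2$). One must be sure that, for $x$ in a subalgebra $\fh\subseteq\fl$, the eigenvalues of $\ad_{\fh}x$ acting on $\fh$ are real whenever those of $\ad_{\fl}x$ acting on $\fl$ are. This follows because $\fh$ is an $\ad_{\fl}x$-invariant subspace of $\fl$, so the characteristic polynomial of $\ad_{\fh}x$ divides that of $\ad_{\fl}x$; all its roots therefore lie among the (real) roots of the latter. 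Solvability of subalgebras is immediate. With these observations the argument is complete; no deep input is needed beyond finite-dimensionality and the elementary stability properties of the class of triangular Lie algebras.
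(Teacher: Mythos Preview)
Your proof is correct and follows essentially the same route as the paper: both reduce to showing that if $\fg/\fj_1$ and $\fg/\fj_2$ are triangular then so is $\fg/(\fj_1\cap\fj_2)$, via the embedding $\fg/(\fj_1\cap\fj_2)\hookrightarrow \fg/\fj_1\oplus\fg/\fj_2$ and the stability of triangularity under subalgebras and finite products. You supply more detail than the paper does---in particular the characteristic-polynomial argument for why the eigenvalue condition passes to subalgebras, and the explicit use of finite-dimensionality to extract a minimal ideal from the family---but the underlying idea is identical.
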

\begin{proof}
Let $\mathfrak{g}$ be a finite-dimensional real Lie algebra. It suffices to show that if $\mathfrak{h}_1$ and $\mathfrak{h}_2$ are ideals in $\mathfrak{g}$ such that $\mathfrak{g}/\mathfrak{h}_1$ and $\mathfrak{g}/\mathfrak{h}_2$ are triangular, then so is $\mathfrak{g}/(\mathfrak{h}_1 \cap \mathfrak{h}_2)$.

It is obvious that the homomorphism $\mathfrak{g}\to \mathfrak{g}/\mathfrak{h}_1\oplus \mathfrak{g}/\mathfrak{h}_2$ factors through $\mathfrak{g}/(\mathfrak{h}_1 \cap \mathfrak{h}_2)$ and the corresponding map is injective. Thus $\mathfrak{g}/(\mathfrak{h}_1 \cap \mathfrak{h}_2)$ is isomorphic to a subalgebra in $\mathfrak{g}/\mathfrak{h}_1\oplus \mathfrak{g}/\mathfrak{h}_2$. It is easy to see that each subalgebra of a triangular Lie algebra is triangular. Since $\mathfrak{g}/\mathfrak{h}_1\oplus \mathfrak{g}/\mathfrak{h}_2$  is triangular, so is $\mathfrak{g}/(\mathfrak{h}_1 \cap \mathfrak{h}_2)$.
\end{proof}

Denote the maximal triangular quotient algebra of a finite-dimensional real Lie algebra by $\mathfrak{g}^{\mathbf{tri}}$ and consider the composition $U(\mathfrak{g})\to U(\mathfrak{g}^{\mathbf{tri}})\to C^\infty_{\mathfrak{g}^{\mathbf{tri}}}$.

\begin{thm}\label{envhtri}
Let $\mathfrak{g}$ be a finite-dimensional real Lie algebra. Then $U(\mathfrak{g})\to C^\infty_{\mathfrak{g}^{\mathbf{tri}}}$ is an envelope with respect to $\mathsf{PG}$.
\end{thm}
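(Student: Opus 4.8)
The plan is to reduce the statement to the triangular case (Theorem~\ref{PGtrifdL}) by a two-step argument: first show that any homomorphism from $U(\fg)$ into a Banach algebra of polynomial growth must factor through $U(\fg^{\mathbf{tri}})$, and then invoke the already-established universal property of $U(\fg^{\mathbf{tri}})\to C^\infty_{\fg^{\mathbf{tri}}}$. Since $C^\infty_{\fg^{\mathbf{tri}}}$ is a $\mathsf{PGL}$ algebra by Theorem~\ref{PGtrifdL}(A), and $U(\fg)\to C^\infty_{\fg^{\mathbf{tri}}}$ is a continuous homomorphism with dense range, the two things to verify are existence and uniqueness of the factorization $\wh\phi$ for each $\phi\!:U(\fg)\to B$ with $B\in\mathsf{PG}$. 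Uniqueness is immediate from density of $U(\fg)$ (hence of $U(\fg^{\mathbf{tri}})$) in $C^\infty_{\fg^{\mathbf{tri}}}$, so the content is existence.

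For existence, let $\phi\!:U(\fg)\to B$ be a homomorphism into a Banach algebra of polynomial growth. By the universal property of the enveloping algebra, $\phi$ is determined by a Lie homomorphism $\fg\to B$ (with $B$ viewed as a Lie algebra under the commutator). I claim the image of this Lie homomorphism lies in a triangular subalgebra and, more precisely, that the Lie homomorphism kills the ideal $\fh:=\ker(\fg\to\fg^{\mathbf{tri}})$. The key structural input is Theorem~\ref{RadPG}: $B/\Rad B$ is commutative and $\Rad B$ is nilpotent. Hence the composite Lie map $\fg\to B\to B/\Rad B$ has image in an abelian (so triangular) Lie algebra, and the Lie map $\fg\to\Rad B$ obtained by projecting has nilpotent image, which is again a triangular (indeed nilpotent) Lie algebra. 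One needs to combine these to conclude that the image of $\fg$ in $B$ (under the Lie bracket) generates a solvable Lie subalgebra on which every $\ad$ has real spectrum — i.e.\ a triangular Lie algebra. Concretely: the associative subalgebra $B_0\subseteq B$ generated by $\phi(\fg)$ is itself of polynomial growth (a closed subalgebra, after completing), so $B_0/\Rad B_0$ is commutative and $\Rad B_0$ nilpotent; the Lie subalgebra $\fs\subseteq B_0$ generated by $\phi(\fg)$ is then an extension of an abelian Lie algebra by a nilpotent one, hence solvable, and for $x\in\fs$, writing $\ad_{B_0}x$ in a basis adapted to the radical filtration shows its eigenvalues are real (they come from the commutative quotient together with nilpotent contributions). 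Therefore the Lie homomorphism $\fg\to\fs$ factors through $\fg^{\mathbf{tri}}$ by maximality, which lifts to a factorization $U(\fg)\to U(\fg^{\mathbf{tri}})\xrightarrow{\psi}B$ at the level of associative algebras.

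Now apply Theorem~\ref{PGtrifdL}(B) to $\psi$: since $B\in\mathsf{PG}$, $\psi$ extends uniquely to a continuous homomorphism $\wh\psi\!:C^\infty_{\fg^{\mathbf{tri}}}\to B$. Composing with $U(\fg)\to U(\fg^{\mathbf{tri}})$ recovers $\phi$, so $\wh\psi$ is the desired $\wh\phi$. Together with the uniqueness from density and the fact that $C^\infty_{\fg^{\mathbf{tri}}}\in\mathsf{PGL}$, this verifies the universal property in Remark~\ref{simpdef}, so $U(\fg)\to C^\infty_{\fg^{\mathbf{tri}}}$ is an envelope with respect to $\mathsf{PG}$.

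The main obstacle is the Lie-algebraic step: showing that the subalgebra of $B$ generated by $\phi(\fg)$ forces the Lie homomorphism to factor through the maximal triangular quotient, i.e.\ verifying that ``extension of abelian by nilpotent inside a polynomial-growth Banach algebra'' yields the triangularity (solvability plus real spectra of all $\ad$'s). The solvability is routine, but checking that every $\ad x$ for $x$ in the generated Lie algebra has only real eigenvalues requires using the specific structure of $\mathsf{PG}$ (the nilpotency of the radical and commutativity of the quotient) rather than just solvability — an argument presumably already implicit in \cite{ArOld}, which I would cite or reproduce. Everything after that reduction is a formal consequence of Theorem~\ref{PGtrifdL} and density.
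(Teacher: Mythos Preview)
Your proposal is correct and follows essentially the same route as the paper: factor any $\phi\!:U(\fg)\to B$ through $U(\fg^{\mathbf{tri}})$ by showing that $\phi(\fg)$ is a triangular Lie subalgebra of $B$, then invoke Theorem~\ref{PGtrifdL}. The paper dispatches the triangularity of $\phi(\fg)$ by a direct citation to \cite[Proposition~4.1]{ArOld}, which is exactly the result you anticipated would be ``already implicit in \cite{ArOld}''; your sketch of why it holds (solvability from the commutative-modulo-nilpotent-radical structure, real spectra of $\ad x$ from the radical filtration) is the right intuition, though note that $\phi(\fg)$ is already a Lie subalgebra so no further generation is needed, and the aside about a ``Lie map $\fg\to\Rad B$ obtained by projecting'' should be dropped since no such projection exists.
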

\begin{proof}
Let $B$ be a Banach algebra of polynomial growth and $\varphi\!: U(\mathfrak{g}) \to B$ a continuous homomorphism. By \cite[Proposition 4.1]{ArOld}, $\varphi(\mathfrak{g})$ is a triangular Lie algebra. Therefore $\mathfrak{g}\to B$ factors through $\mathfrak{g}^{\mathbf{tri}}$ and hence $\varphi$ factors through $U(\mathfrak{g}^{\mathbf{tri}})$. Since $\mathfrak{g}^{\mathbf{tri}}$ is triangular, it remains to apply Theorem~\ref{PGtrifdL}.
\end{proof}

\begin{exms}\label{nontri}
(A)~It is easy to see that  $\mathfrak{g}^{\mathbf{tri}}=0$ when $\mathfrak{g}=\mathfrak{s}\mathfrak{l}_2$ or $\mathfrak{g}=\mathbb{R}^2\rtimes\mathfrak{s}\mathfrak{l}_2$. Thus $\widehat U(\mathfrak{g})^{\,\mathsf{PG}}$ is also trivial in both cases. In general $\widehat U(\mathfrak{g})^{\,\mathsf{PG}}=0$  for every semi-simple real Lie algebra~$\mathfrak{g}$.

(B)~Let  $\mathfrak{e}_2$ denote the Lie algebra of the group of motions of the plane~$\mathbb{R}^2$. It has a linear basis $x_1,x_2,x_3$ that satisfies the relations
$$
[x_1,x_2]=x_3, \quad [x_1,x_3]=-x_2, \quad [x_2,x_3]=0.
$$
It is the simplest example of a solvable, but not triangular, Lie algebra (the operator $\ad x_1$ has eigenvalues $i,0,-i$). It is easy to see that $[\mathfrak{e}_2,\mathfrak{e}_2]$ is a minimal ideal with basis $x_2,x_3$. Thus $\mathfrak{e}_2^{\mathbf{tri}}\cong \mathfrak{e}_2/[\mathfrak{e}_2,\mathfrak{e}_2]$. Then $\widehat U(\mathfrak{e}_2)^{\,\mathsf{PG}}$ is isomorphic to $C^\infty(\mathbb{R})$ by Theorem~\ref{envhtri}.
\end{exms}

Recall that ${\mathcal R}(SL_q(2,\mathbb{R}))$ and $U(\mathfrak{g})$ have Hopf algebra structures. Similarly,  $\widehat U(\mathfrak{g})^{\,\mathsf{PG}}$ and ${{\mathcal R}(SL_q(2,\mathbb{R}))\sphat}{\,\,}^{\,\mathsf{PG}}$ admit induced topological Hopf algebra structures. This topic is discussed in~\cite{ArNew2}.

\section{Envelopes of free algebras}
\label{sec:efa}

When $\mathsf{C}=\mathsf{PG}$, tensor $\mathsf{CL}$ algebras can be naturally called \emph{tensor $\mathsf{PGL}$ algebras} and, particularly, free $\mathsf{CL}$ algebras can be called \emph{free $\mathsf{PGL}$ algebras}. We also name them \emph{$C^\infty$-tensor algebras}  and  \emph{$C^\infty$-free algebras}. In this section we describe an explicit form of the free $\mathsf{PGL}$ algebra ${\mathscr F}^{\,\mathsf{PG}}\{X\}$ in the case when $X$ is a finite set of cardinality~$k$. For brevity, we denote ${\mathscr F}^{\,\mathsf{PG}}\{X\}$ by~${\mathscr F}_{k}^{\,\mathsf{PG}}$. The proof is given only for $k\leqslant 2$; the general case is postponed to a forthcoming paper.

\subsection*{Formal tensor algebra}
We need another version of tensor algebra --- formal tensor algebra; cf. \cite{DM77} in the Banach space case.

\begin{df}\label{formtena}
Let $E$ be a complete (real or complex) locally convex space. The \emph{formal tensor algebra} of $E$ is an Arens--Michael algebra $[T](E)$ with a continuous linear map $\mu\!:E\to [T](E)$ satisfying the following condition.
For every Banach algebra~$B$ and every continuous linear map $\psi\!: E \to B$ with nilpotent range there is a unique unital continuous homomorphism $\widehat\psi\!:[T](E)\to B$ such that the diagram
\begin{equation*}
  \xymatrix{
E \ar[r]^{\mu}\ar[rd]_{\psi}&[T](E)\ar@{-->}[d]^{\widehat\psi}\\
 &B\\
 }
\end{equation*}
is commutative.
\end{df}

The terminology comes from the explicit construction given in the following proposition.

\begin{pr}\label{desrfta}
Let $E$ be a complete (real or complex) locally convex space.
The direct product of all projective tensor products of the form $E\ptn \cdots \ptn E$, including $\mathbb{R}$ (or $\mathbb{C}$), with multiplication extended from the usual tensor algebra $T(E)$ is the formal tensor algebra of~$E$.
\end{pr}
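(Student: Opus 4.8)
The plan is to verify the universal property in Definition~\ref{formtena} for the explicit algebra described in the statement. Write $\widehat T(E):=\prod_{n\ge0} E^{\ptn n}$ (with $E^{\ptn 0}=\R$ or $\CC$), equipped with the product topology and the multiplication extended from the usual tensor algebra $T(E)$ by concatenation; one first checks that this multiplication is jointly continuous on the product, so that $\widehat T(E)$ is indeed an Arens--Michael algebra (its topology is given by the submultiplicative seminorms $\|x\|=\sum_n p^{\otimes n}(x_n)$ over continuous submultiplicative seminorms $p$ on the purely algebraic tensor algebra — actually it suffices to observe that $\widehat T(E)$ is the completion of $T(E)$ in the topology where every seminorm of $E$ is replaced by the associated ``formal'' submultiplicative seminorm, matching \cite[\S\,4.2]{Pir_qfree}). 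Denote by $\mu\colon E\to\widehat T(E)$ the obvious inclusion into the degree-one component.

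Next I would establish the universal property. Let $B$ be a Banach algebra, $\psi\colon E\to B$ a continuous linear map with nilpotent range, say $\psi(E)^{N}=0$ for some $N$. Existence: define $\widehat\psi\colon\widehat T(E)\to B$ on an element $x=(x_n)_{n\ge0}$ by $\widehat\psi(x)=\sum_{n=0}^{N-1} m_n(\psi^{\otimes n}(x_n))$, where $m_n\colon B^{\ptn n}\to B$ is (the linearization of) $n$-fold multiplication and $x_0\in\R$ is sent to $x_0\cdot1_B$; the sum is finite because $\psi^{\otimes n}(x_n)\in\psi(E)^{\otimes n}\subseteq\psi(E)^n=0$ for $n\ge N$. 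Continuity is clear since only finitely many coordinate projections are involved and each $m_n\circ\psi^{\otimes n}$ is continuous; multiplicativity and unitality follow from the definition of the concatenation product and the fact that the truncation at degree $N$ is compatible with multiplication modulo the ideal killed by $\widehat\psi$. Uniqueness: any unital continuous homomorphism $\widehat\psi$ with $\widehat\psi\mu=\psi$ is forced on $T(E)\subseteq\widehat T(E)$ to agree with the above formula; since $\psi(E)$ is nilpotent the kernel of $\widehat\psi$ contains the closed ideal $\prod_{n\ge N}E^{\ptn n}$, and $T(E)$ together with this ideal spans a dense subalgebra (indeed, modulo that ideal every element of $\widehat T(E)$ is a finite sum of tensors), so $\widehat\psi$ is determined by continuity.

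The main obstacle, and the only genuinely non-routine point, is the interplay between the topology and the nilpotency hypothesis in the uniqueness argument: one must check that $T(E)+\prod_{n\ge N}E^{\ptn n}$ is in fact the whole of $\widehat T(E)$ (which it is, since an arbitrary element $(x_n)_n$ equals the finite sum $\sum_{n<N}x_n$ plus its tail), so that continuity pins $\widehat\psi$ down immediately — there is no need for a density-plus-continuity limiting argument at all, which is a pleasant simplification. The remaining verifications (joint continuity of concatenation on the product, that $\widehat T(E)$ is Arens--Michael, functoriality of $\mu$) are entirely parallel to the analytic tensor algebra case in \cite[\S\,4.2]{Pir_qfree} and I would simply cite that source and indicate the one change: the submultiplicative seminorms on $E^{\ptn n}$ are taken with no growth weight (formal, i.e. weight $1$) rather than with the factorial-type weights used for the Arens--Michael envelope, which is exactly what makes the nilpotent-range hypothesis the right domain of validity for the universal property.
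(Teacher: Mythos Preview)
Your approach is essentially the same as the paper's: construct $\widehat\psi$ degree-by-degree using the nilpotency of $\psi(E)$, and argue uniqueness by density. One correction: your claim that $T(E)+\prod_{n\ge N}E^{\ptn n}$ equals all of $\widehat T(E)$ is false, since for $n<N$ the component $x_n$ lies in the completed tensor power $E^{\ptn n}$, not necessarily in the algebraic $E^{\otimes n}\subset T(E)$. The paper avoids this by simply observing that $T(E)$ is dense in $\prod_{n}E^{\ptn n}$ (the algebraic direct sum is dense in the product, and each $E^{\otimes n}$ is dense in $E^{\ptn n}$), which already forces uniqueness of $\widehat\psi$ by continuity; your detour through the tail ideal is unnecessary.
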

\begin{proof}
It is easy to see that $\Pi\!:=\prod_{k=0}^\infty E^{\ptn k}$ is a unital Arens--Michael algebra with respect to the multiplication  induced by the isomorphisms $E^{\ptn k}\ptn E^{\ptn l}\cong E^{\ptn (k+l)}$; $k,l\in\mathbb{Z}_+$.

Let~$B$ be a Banach algebra and $\psi\!: E \to B$ a continuous linear map such that $\psi(E)^n=0$ for some $n\in\mathbb{N}$. For every $k\in\mathbb{N}$ consider the continuous linear map
$$
\psi_k\!:E^{\ptn k}\to B\!:\, x_1\otimes\cdots\otimes x_k\mapsto \psi(x_1)\cdots\psi(x_k).
$$
Since the  direct product of finitely many locally convex spaces is isomorphic to the sum, we obtain a continuous linear map
$\widehat\psi\!:\prod_{k=0}^{n-1} E^{\ptn k}\to B$, which extends to $ \Pi$ since $\psi(E)^n=0$. It is not hard to see that $\widehat\psi$ is a homomorphism of associative algebras such that $\psi=\widehat\psi \mu$, where $\mu\!:E\to \Pi$ is defined in an obvious way.  Such $\widehat\psi$ is unique since $T(E)$ is dense in $\Pi$. So the conditions of Definition~\ref{formtena} hold and this completes the proof.
\end{proof}

Formal tensor algebras  are useful because the commutant of a Banach algebra of polynomial growth is nilpotent; see Theorem~\ref{RadPG}.

\subsection*{Ordered calculus}

To find an explicit form of ${\mathscr F}_{k}^{\,\mathsf{PG}}$ we use the fact that a  free associative real algebra with $k$ generators is isomorphic to $U(\mathfrak{f}_k)$, where $\mathfrak{f}_k$ is a  free real Lie algebra with $k$ generators. Therefore ${\mathscr F}_{k}^{\,\mathsf{PG}}\cong \widehat U(\mathfrak{f}_k)^{\,\mathsf{PG}}$. Since $\mathfrak{f}_k$ is not finite-dimensional, Theorem~\ref{envhtri} cannot be applied and, as we will see, the envelope of $U(\mathfrak{f}_k)$ is more complicated. However, the proof uses the same scheme as for Theorem~\ref{PGtrifdL}; cf.~\cite{ArOld}.

We need the following well-known fact on the commutant of a free Lie algebra.

\begin{pr}
\label{bascommfree}
Let $k\geqslant 2$ and let $e_1,\ldots,e_k$ denote the generators of $\mathfrak{f}_k$. Then $[\mathfrak{f}_k,\mathfrak{f}_k]$ is an infinitely generated free Lie algebra with the generators
\begin{equation}\label{basisg0}
[e_{j_1},[e_{j_2},[\cdots [e_{j_{s-1}}, e_{j_s}]\cdots]],\qquad j_1\geqslant j_2\geqslant\cdots\geqslant j_{s-1}<j_s,\,s\geqslant 2.
\end{equation}
\end{pr}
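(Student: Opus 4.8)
The plan is to recognize the displayed bracketings \eqref{basisg0} as a \emph{Hall-type basis}, adapted to the grading by the generators. First I would recall that $\ff_k$ is graded not only by total degree but by the multidegree in $e_1,\dots,e_k$; consequently $[\ff_k,\ff_k]$ is the direct sum of all graded components of multidegree whose total is $\ge 2$. The elements in \eqref{basisg0} are \emph{left-normed} brackets of the form $[e_{j_1},[e_{j_2},[\cdots,[e_{j_{s-1}},e_{j_s}]\cdots]]]$ with the index restriction $j_1\ge j_2\ge\cdots\ge j_{s-1}$ and $j_{s-1}<j_s$. The first task is to show these elements lie in $[\ff_k,\ff_k]$ (clear, since $s\ge 2$), are linearly independent, and span $[\ff_k,\ff_k]$ as a vector space \emph{modulo} the Lie ideal they generate — actually the cleaner route is to show they \emph{freely generate} $[\ff_k,\ff_k]$ as a Lie algebra.

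The key step is a standard argument with the Shirshov--Witt theorem: every subalgebra of a free Lie algebra is free. So $[\ff_k,\ff_k]$ is automatically free; what remains is to identify a free generating set and count its elements in each multidegree. For this I would use the following well-known fact (essentially a version of the Lazard elimination / Lyndon-word argument): if $\ff_k = \ff\langle e_1,\dots,e_k\rangle$, then as a Lie algebra $\ff_k$ decomposes so that $[\ff_k,\ff_k]$ (the span of all non-constant-degree-one brackets) is free on the set of iterated brackets $[\dots[[g,e_{j}],\dots]$ obtained by "peeling off" generators; concretely, the Lazard elimination theorem states that $\ff\langle e_1,\dots,e_k\rangle$ is the semidirect product of the one-dimensional $\langle e_k\rangle$... — more useful here is the iterated form giving that $[\ff_k,\ff_k]$ is free on $\{(\ad e_{j_1})\cdots(\ad e_{j_{s-1}})e_{j_s} : j_1\ge\cdots\ge j_{s-1}, j_{s-1}<j_s, s\ge 2\}$, which is exactly \eqref{basisg0}. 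I would either cite this directly (e.g. Bourbaki, \emph{Lie Groups and Lie Algebras}, Ch.~II, §2–3, or Reutenauer, \emph{Free Lie Algebras}) or reconstruct it: the elements \eqref{basisg0}, together with $e_1,\dots,e_k$, form part of a Hall basis of $\ff_k$, and removing $e_1,\dots,e_k$ leaves precisely a Hall basis of the derived subalgebra, whence freeness on that set follows from the Hall-basis characterization of free Lie algebras.

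The main obstacle is making the elimination/counting step fully rigorous rather than merely invoking "it is well known": one must verify that the index condition $j_1\ge\cdots\ge j_{s-1}<j_s$ selects exactly one representative in each relevant multidegree and that no relations are imposed among the resulting brackets inside $[\ff_k,\ff_k]$. The slick way around this is: (i) by Shirshov--Witt, $[\ff_k,\ff_k]$ is free on \emph{some} set $Y$; (ii) $[\ff_k,\ff_k]/[[\ff_k,\ff_k],[\ff_k,\ff_k]]$ has a basis indexed by $Y$; (iii) one computes this abelianization directly — it is spanned by the images of \eqref{basisg0}, and a Poincaré–Birkhoff–Witt / generating-function computation of dimensions in each multidegree shows the images of \eqref{basisg0} are a basis of it; (iv) therefore \eqref{basisg0} is a free generating set. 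Since the statement is labelled "well-known" in the paper, I expect the author simply gives a one-line citation, so in practice the proof is: \emph{cite Shirshov--Witt for freeness and cite the Lazard-elimination description (e.g. Bourbaki or Reutenauer) for the explicit generators}, with perhaps a sentence indicating the multidegree bookkeeping.
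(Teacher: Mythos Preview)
Your anticipation is exactly right: the paper gives no proof at all, merely the sentence ``For a proof (in the case of a free Lie algebra over an arbitrary commutative ring) see \cite[p.\,64, \S\,2.4.2, Corollary~2.16(ii)]{Ba21} or \cite[p.\,72, Lemma 2.11.16]{BK94}.'' Your sketched approach via Shirshov--Witt plus Lazard elimination (or Hall bases) is the standard route and is essentially what those references do; the only difference is that the paper cites Bahturin and Bokut'--Kukin rather than Bourbaki or Reutenauer.
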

For a proof (in the case of a free Lie algebra over an arbitrary commutative ring) see \cite[p.\,64, \S\,2.4.2, Corollary~2.16(ii)]{Ba21} or \cite[p.\,72, Lemma 2.11.16]{BK94}.

Denote by $\Delta$ the set of all pairs $(l,j)$ of positive integers such that $k\geqslant l>j\geqslant 1$. Here and throughout this section, when using the notation $l$ and $j$, it is always assumed that they form a pair $\delta\in\Delta$ and vice versa. It is natural to use polynomials in the operators $\ad e_i$.
For $\delta\in\Delta$ and $\beta=(\beta_1,\ldots, \beta_l)\in\mathbb{Z}_+^l$ put
\begin{equation}\label{notg}
 g_{\delta,\beta}\!:=(\ad{e_1})^{\beta_1}\cdots (\ad{e_{l-1}})^{\beta_{l-1}}(\ad{e_l})^{\beta_l+1}(e_j).
\end{equation}

Reversing the order of indices in~\eqref{basisg0} and reformulating, we get from Proposition~\ref{bascommfree} that
\begin{equation*}
  \{g_{\delta,\beta}\!:\,\delta\in\Delta,\, \beta\in\mathbb{Z}_+^l\}
\end{equation*}
is an algebraic basis of $[\mathfrak{f}_k,\mathfrak{f}_k]$. Denote its linear span by~$V$.

It follows from the PBW theorem that $U(\mathfrak{f}_k)\cong U(\mathfrak{f}_k/[\mathfrak{f}_k,\mathfrak{f}_k])\otimes U([\mathfrak{f}_k,\mathfrak{f}_k])$ as a vector space. Since $[\mathfrak{f}_k,\mathfrak{f}_k]$ is free, we have a linear isomorphism
\begin{equation}\label{Ufdec}
U(\mathfrak{f}_k)\cong \mathbb{R}[e_1,\ldots,e_k ]\otimes T(V),
\end{equation}
where $T(V)$ is the tensor algebra of the vector space $V$. Note that \eqref{Ufdec} also holds for $k=1$ since in this case $\Delta$ is empty and so $V=0$.

Further, every element of $V$ has the form
$$
\sum_{\delta\in\Delta}\Psi_\delta(f_\delta)([e_l,e_j]),
$$
where $f_\delta\in \mathbb{R}[\mu_1,\ldots,\mu_l]$ and
\begin{equation}\label{orcalPsi}
\Psi_\delta\!:\mu_1^{\beta_1}\cdots  \mu_l^{\beta_l} \mapsto (\ad e_1)^{\beta_1}\cdots  (\ad e_l)^{\beta_l}
\end{equation}
is a linear map from $\mathbb{R}[\mu_1,\ldots,\mu_l]$ to the space of linear endomorphisms of $[\mathfrak{f}_k,\mathfrak{f}_k]$. Therefore,
\begin{equation*}
V\cong \bigoplus_{\delta\in\Delta} \mathbb{R}[\mu_1,\ldots,\mu_l].
\end{equation*}

Let $X\!:=\bigsqcup_{\delta\in\Delta} X_\delta$, where $X_{\delta}\!:=\mathbb{R}^l$. When $k=1$, we put $X=\emptyset$.
In general, $X$ is not always a manifold since the components can have different dimensions. However, we use the notation
\begin{equation}\label{cinX}
C^\infty(X)\!:=\bigoplus_{\delta\in\Delta}  C^\infty(X_\delta).
\end{equation}
The natural maps $\mathbb{R}[\mu_1,\ldots,\mu_l]\to C^\infty(X_\delta)$, $\delta\in\Delta$, induce an embedding $V\to C^\infty(X)$.

By analogy with~\eqref{Cinffgdef}, we put
\begin{equation}\label{defCifk}
C^\infty_{\mathfrak{f}_k}\!:=C^\infty(\mathbb{R}^k)\ptn [T](C^\infty(X))
\end{equation}
for $k\in\mathbb{N}$. (Here $[T](C^\infty(X))$ is the formal tensor algebra associated with the Fr\'echet space $C^\infty(X)$; see at the beginning of this section.)

Our aim is to show that $C^\infty_{\mathfrak{f}_k}$ admits a multiplication extending the multiplication on $U(\mathfrak{f}_k)$ and making it a $\mathsf{PGL}$ algebra and, moreover, that $U(\mathfrak{f}_k)\to C^\infty_{\mathfrak{f}_k}$ is an envelope with respect to $\mathsf{PG}$. Here we use the decomposition~\eqref{Ufdec} to define the embedding $U(\mathfrak{f}_k)\to C^\infty_{\mathfrak{f}_k}$ as the tensor product of the linear map $\mathbb{R}[e_1,\ldots,e_k]\to C^\infty (\mathbb{R}^k)$ and the linear map $T(V)\to [T](C^\infty(X))$ induced by $V\to C^\infty(X)$.

The fact that $C^\infty_{\mathfrak{f}_k}$ is an algebra is shown in Theorem~\ref{multCiffk} below. We first prove the existence of an ordered calculus, which is actually multiplicative (Theorem~\ref{fuctcafree}).
The following result is an analogue of Theorem~3.3 in \cite{ArOld}.

\begin{thm}\label{orcalgen}
Let $B$ be in $\mathsf{PGL}$ and  $b_1, \ldots, b_k\in B$. Then the homomorphism  $\theta\!:U(\mathfrak{f}_k)\to B$ determined by $e_j\mapsto b_j$ extends to a linear continuous map $C^\infty_{\mathfrak{f}_k}\to B$.
\end{thm}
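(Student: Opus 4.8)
The plan is to reduce to $B\in\mathsf{PG}$ and then to build the extension separately on the two tensor factors of $C^\infty_{\ff_k}=C^\infty(\R^k)\ptn[T](C^\infty(X))$, as in the proof of Theorem~\ref{PGtrifdL} (cf.\ \cite{ArOld}). For the reduction, by Proposition~\ref{subPG} we may assume that $B$ is a closed subalgebra of a product $\prod_iB_i$ with each $B_i\in\mathsf{PG}$; composing with the projections yields $\theta_i\!:U(\ff_k)\to B_i$, and once each $\theta_i$ is extended to a continuous linear map $C^\infty_{\ff_k}\to B_i$ these assemble into a continuous linear map $C^\infty_{\ff_k}\to\prod_iB_i$ extending $U(\ff_k)\to\prod_iB_i$; since $U(\ff_k)$ has dense image in $C^\infty_{\ff_k}$ (density of polynomials in $C^\infty$ on $\R^k$ and on each $X_\de$, and of $T(\,\cdot\,)$ in $[T](\,\cdot\,)$) and $B$ is closed, this map takes values in $B$. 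So assume $B\in\mathsf{PG}$, fix $N$ with $(\Rad B)^N=0$ (Theorem~\ref{RadPG}), and put $b_i\!:=\theta(e_i)$. On the first factor, since $b_1,\dots,b_k$ are of polynomial growth, Theorem~\ref{fucani} (with $m=k$ and no nilpotent generators) extends the ordered polynomial map $x_1^{\ga_1}\cdots x_k^{\ga_k}\mapsto b_1^{\ga_1}\cdots b_k^{\ga_k}$ to a continuous linear map $\Theta_0\!:C^\infty(\R^k)\to B$.

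For the second factor I will produce a continuous linear map $\tau\!:C^\infty(X)\to B$ whose range lies in $\Rad B$ and which, under the identification $V\cong\bigoplus_\de\R[\mu_1,\dots,\mu_l]$ preceding \eqref{cinX}, sends the monomial $\mu^\be$ in the $\de$-summand to $\theta(g_{\de,\be})$, with $g_{\de,\be}$ as in \eqref{notg}. Since $B/\Rad B$ is commutative we have $\theta([\ff_k,\ff_k])\subseteq\Rad B$, so asking that $\tau$ land in the closed nilpotent ideal $\Rad B$ is consistent. Granting $\tau$, its range is nilpotent, so the universal property of the formal tensor algebra (Definition~\ref{formtena}) gives a unital continuous homomorphism $\wh\tau\!:[T](C^\infty(X))\to B$ extending $\tau$.

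The construction of $\tau$ is the main step. Fix $\de=(l,j)\in\De$ and work in the Banach algebra $\mathcal B(B)$ of bounded operators on $B$ (Arens--Michael, but far from polynomial growth). Take the left and right multiplication operators $L_{b_1},\dots,L_{b_l},R_{b_1},\dots,R_{b_l}$: each is of polynomial growth since $e^{isL_b}=L_{e^{isb}}$, $e^{isR_b}=R_{e^{isb}}$ and $\|L_c\|,\|R_c\|\le\|c\|$, and every $L_{b_i}$ commutes with every $R_{b_{i'}}$. The point is that Theorem~\ref{fucani} needs no commutativity among $L_{b_1},\dots,L_{b_l}$ nor among $R_{b_1},\dots,R_{b_l}$, so it extends the ordered polynomial calculus $\nu_1^{a_1}\cdots\nu_l^{a_l}\rho_1^{c_1}\cdots\rho_l^{c_l}\mapsto L_{b_1}^{a_1}\cdots L_{b_l}^{a_l}R_{b_1}^{c_1}\cdots R_{b_l}^{c_l}$ to a continuous linear map $\Xi\!:C^\infty(\R^{2l})\to\mathcal B(B)$. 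Because left and right multiplications commute, expanding $(L_{b_i}-R_{b_i})^{\be_i}$ by the binomial theorem and moving every right multiplication past every left one shows that $\Xi$ carries the polynomial $\prod_{i=1}^l(\nu_i-\rho_i)^{\be_i}$ to the operator $(\ad b_1)^{\be_1}\cdots(\ad b_l)^{\be_l}$. Composing $\Xi$ with the pullback $C^\infty(\R^l)\to C^\infty(\R^{2l})$ along the submersion $(\nu,\rho)\mapsto(\nu_1-\rho_1,\dots,\nu_l-\rho_l)$ and then evaluating at $[b_l,b_j]=(\ad b_l)(b_j)$ therefore produces a continuous linear map $\tau_\de\!:C^\infty(X_\de)\to B$ with $\tau_\de(\mu^\be)=(\ad b_1)^{\be_1}\cdots(\ad b_l)^{\be_l}\bigl([b_l,b_j]\bigr)=\theta(g_{\de,\be})$. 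As $\Rad B$ is a closed ideal preserved by every $L_{b_i}$ and $R_{b_i}$, it is preserved by every operator $\Xi(f)$, and since $[b_l,b_j]\in\Rad B$ we get $\tau_\de(C^\infty(X_\de))\subseteq\Rad B$. Put $\tau\!:=\bigoplus_\de\tau_\de$.

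Finally, let $\Theta$ be the composite
\[
C^\infty(\R^k)\ptn[T](C^\infty(X))\xrightarrow{\ \Theta_0\ptn\wh\tau\ }B\ptn B\xrightarrow{\ \rrm\ }B,
\]
where $\rrm$ is the linearization of the multiplication of $B$; it is continuous and linear. Using \eqref{Ufdec}, one checks on elements of the form $e_1^{\ga_1}\cdots e_k^{\ga_k}\otimes(v_1\cdots v_r)$ with $v_s\in V$ that $\Theta_0$ reproduces $\theta$ on $\R[e_1,\dots,e_k]$, that $\wh\tau$ reproduces $\theta$ on $T(V)$ (because $\wh\tau$ is multiplicative and $\tau$ coincides with $\theta$ on $V$), and hence --- $\theta$ being a homomorphism compatible with this factorisation --- that $\Theta$ extends $\theta$; uniqueness (not needed here) would follow from the density just used. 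I expect the one genuinely delicate point to be the construction of $\tau$: realising that the operator polynomial $(\ad b_1)^{\be_1}\cdots(\ad b_l)^{\be_l}$, evaluated at the fixed vector $[b_l,b_j]\in\Rad B$, is governed by a $C^\infty$-calculus in $2l$ variables for the polynomial-growth operators $L_{b_i},R_{b_i}$ on $B$ (not the $b_i$ themselves), that the substitution $\mu_i\mapsto\nu_i-\rho_i$ converts this into the calculus required on $C^\infty(X_\de)$, and that all the values remain inside the nilpotent ideal $\Rad B$.
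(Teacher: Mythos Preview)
Your proof is correct and follows the same global architecture as the paper (reduce to $B\in\mathsf{PG}$; extend separately on the two tensor factors of $C^\infty_{\ff_k}$; land the second factor in $\Rad B$; invoke the universal property of $[T](\,\cdot\,)$; finish by composing with the multiplication $B\ptn B\to B$).  The genuine difference is in how you build $\tau_\de$.  The paper uses Lemma~\ref{PGad} and writes an explicit Fourier integral
\[
\Psi_\de(f)=\frac{1}{(2\pi)^l}\int_{\R^l}\widehat f(s)\,e^{is_1\ad b_1}\cdots e^{is_l\ad b_l}([b_l,b_j])\,ds,
\]
then invokes \eqref{adxb} to see that the integrand is a commutator, hence in $\Rad B$, and uses Bochner integrability plus closedness of $\Rad B$.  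You instead run Theorem~\ref{fucani} once more, now in $\mathcal B(B)$ for the $2l$ polynomial-growth operators $L_{b_1},\dots,L_{b_l},R_{b_1},\dots,R_{b_l}$, and recover the $\ad$-calculus by pulling back along $\mu_i\mapsto\nu_i-\rho_i$; the binomial expansion of $(L_{b_i}-R_{b_i})^{\be_i}$ and the commutation of left with right multiplications make this identification exact on polynomials.  Your argument that $\Xi(f)$ preserves $\Rad B$ is correct (the set of operators preserving a closed ideal is closed in $\mathcal B(B)$, and polynomials are dense in $C^\infty(\R^{2l})$), though it would not hurt to state that density step explicitly.  What your route buys is modularity: you avoid the separate integrability discussion and re-use the black box of Theorem~\ref{fucani}; what the paper's route buys is a concrete formula for $\Psi_\de$, which may be useful elsewhere.
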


For the proof we need the well-known formula
\begin{equation}
\label{adxb}
\exp(is\ad a)(b)=\exp(isa)\,b \exp(-isa),
\end{equation}
where $a$ and $b$ are elements of a Banach algebra and $s$ is a scalar; see, e.g., \cite[Chapter~II, \S\,15, p.\,83, Remark~1]{BS01}.

The following useful lemma is a consequence of~\eqref{adxb}; see details in the proof of Proposition~4.1 in~\cite{ArOld}.
\begin{lm}\label{PGad}
If an element $b$ of a real Banach algebra $B$ is of polynomial growth, then so is the operator $\ad b\!:B\to B$.
\end{lm}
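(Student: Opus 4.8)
The plan is to realize $\ad b$ as an element of the unital real Banach algebra $\mathcal B(B)$ of bounded linear operators on~$B$ and to estimate the operator norm of $e^{is\ad b}$ directly by means of~\eqref{adxb}. First note that $\ad b\in\mathcal B(B)$, since $\|\ad b(x)\|=\|bx-xb\|\le 2\|b\|\,\|x\|$ for all $x\in B$. Because $\mathcal B(B)$ is unital, by Definition~\ref{locPG}(A) together with Lemma~\ref{PGgendf} it suffices to produce $K>0$ and $\al\ge0$ such that $\|e^{is\ad b}\|_{\mathcal B(B)}\le K(1+|s|)^\al$ for all $s\in\R$, where $e^{is\ad b}$ is the exponential computed in $\mathcal B(B)$.

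Next I would rewrite $e^{is\ad b}$ as a product of multiplication operators. For $a\in B_+$ let $L_a$ and $R_a$ denote left and right multiplication by $a$ acting on $B$; these map $B$ into itself because $B$ is a closed ideal of $B_+$, and one has $\|L_a\|_{\mathcal B(B)}\le\|a\|_+$ and $\|R_a\|_{\mathcal B(B)}\le\|a\|_+$. Applying~\eqref{adxb} with the element under consideration equal to~$b$ and the second argument a variable $x\in B$ gives $e^{is\ad b}(x)=e^{isb}\,x\,e^{-isb}$, i.e.\ $e^{is\ad b}=L_{e^{isb}}R_{e^{-isb}}$ in $\mathcal B(B)$, where $e^{\pm isb}\in B_+$.

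Combining these two observations yields
\begin{equation*}
\|e^{is\ad b}\|_{\mathcal B(B)}\le\|L_{e^{isb}}\|_{\mathcal B(B)}\,\|R_{e^{-isb}}\|_{\mathcal B(B)}\le\|e^{isb}\|_+\,\|e^{-isb}\|_+ .
\end{equation*}
Since $b$ is of polynomial growth, Condition~(2) of Lemma~\ref{PGgendf} furnishes $K>0$ and $\al\ge0$ with $\|e^{isb}\|_+\le K(1+|s|)^\al$ for all $s\in\R$; replacing $s$ by $-s$ gives the same bound for $\|e^{-isb}\|_+$. Hence $\|e^{is\ad b}\|_{\mathcal B(B)}\le K^2(1+|s|)^{2\al}$, which is the required estimate, so $\ad b$ is of polynomial growth.

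As for difficulty: the argument is short and essentially formal once~\eqref{adxb} is available. The only points that require a little care are that the operator exponential in $\mathcal B(B)$ really coincides with the map $x\mapsto e^{isb}xe^{-isb}$ --- which is exactly the content of~\eqref{adxb} --- and the treatment of the non-unital case, where one must pass to $B_+$ and use that $B$ is an ideal, so that $L_{e^{isb}}$ and $R_{e^{-isb}}$ are well-defined bounded operators on~$B$ with norms controlled by $\|\cdot\|_+$.
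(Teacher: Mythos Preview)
Your proof is correct and follows exactly the route indicated by the paper, which merely records that the lemma is a consequence of~\eqref{adxb} and refers to \cite{ArOld} for details. Your treatment is in fact more explicit than what the paper gives, and your care with the non-unital case (passing to $B_+$ and viewing $L_{e^{isb}}$, $R_{e^{-isb}}$ as operators on the ideal $B$) is a nice touch that the paper leaves implicit.
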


\begin{proof}[Proof of Theorem~\ref{orcalgen}]
It suffices to consider the case when $B$ is a Banach algebra of polynomial growth. Then we can take the
linear maps from $\mathbb{R}[e_1,\ldots,e_k]$ and $T(V)$ to $B$ given by the decomposition in~\eqref{Ufdec}.
We claim that they extends to continuous linear maps from both tensor factors in~\eqref{defCifk}, $C^\infty(\mathbb{R}^k)$ and $[T](C^\infty(X))$.

First, since $b_1, \ldots, b_k$ are of polynomial growth, it follows from Theorem~\ref{fucani} that the ordered polynomial calculus $\mathbb{R}[e_1,\ldots,e_k]\to B$ extends to a continuous ordered calculus $C^\infty(\mathbb{R}^k)\to B$.

Second, we define a continuous linear map $C^\infty(X)\to B$. To do this, for every $\delta\in\Delta$ we extend $\Psi_\delta$ in~\eqref{orcalPsi} to $C^\infty(X_\delta)$. Specifically, put
\begin{equation}\label{Psideext}
\Psi_\delta\!:f\mapsto\frac1{(2\pi)^l}\int_{\mathbb{R}^l}  \widehat f(s_1,\ldots,s_l)\,
 \exp\bigl(i s_1\ad b_1\bigr)\cdots \exp\bigl(i s_l \ad b_l\bigr)([b_l,b_j])\,ds_1\cdots ds_l,
\end{equation}
where the hat means the Fourier transform. By Lemma~\ref{PGad}, the operators $\ad b_1, \ldots, \ad b_k$ are of polynomial growth. Therefore
the function
$$
(s_1,\ldots,s_l)\mapsto\|\widehat f(s_1,\ldots,s_l)\,
 \exp\bigl(i s_1\ad b_1\bigr)\cdots \exp\bigl(i s_l \ad b_l\bigr)([b_l,b_j])\|
 $$
is absolutely integrable. Thus, the integral exists not only in the weak sense (of Gelfand and Pettis), but also in the strong sense (of Bochner) \cite[p.\,80, Theorem 3.7.4]{HP57}. Applying Theorem~\ref{fucani} again, now to the tuple $(\ad b_1, \ldots, \ad b_k)$, we see that $\Psi_\delta$ is indeed  an extension of the ordered calculus in~\eqref{orcalPsi}. From the definition of $C^\infty(X)$ (see \eqref{cinX}), we have an extension of the map $V\to B$ to $C^\infty(X)\to B$.

It follows from~\eqref{adxb} that
$$
 \exp\bigl(i s_l \ad b_l\bigr)([b_l,b_j])=[\exp\bigl(i s_l b_l)\,b_l \exp\bigl(-i s_l b_l)\, , \,\exp\bigl(i s_l b_l)\,b_j\exp\bigl(-i s_l b_l)].
$$
By Theorem~\ref{RadPG}, each Banach algebra of polynomial growth is commutative modulo the radical. Therefore the integrand in~\eqref{Psideext} belongs to  $\Rad B$ for all $s_1,\ldots,s_l$. Since the radical of a Banach algebra is closed and the integral converges in the strong sense, we have that $\Psi_\delta(f)\in\Rad B$ for every $f\in C^\infty(X_\delta)$. Therefore the image of $C^\infty(X)$ is also in $\Rad B$.

By Theorem~\ref{RadPG},  $\Rad B$ is nilpotent. Then we can apply the universal property for $[T](C^\infty(X))$ in Definition~\ref{formtena} and extend the continuous linear map $C^\infty(X)\to B$ constructed above to a continuous linear map from $[T](C^\infty(X) )$ to~$B$. The claim is proved.

It is clear that the composition of the projective tensor product of maps from $C^\infty(\mathbb{R}^k)$ and $[T](C^\infty(X))$ to~$B$ with
the linearization of the multiplication in~$B$,
$$
C^\infty(\mathbb{R}^k)\ptn [T](C^\infty(X))\to B\ptn B\to B,
$$
is the desired extension of $\theta$.
\end{proof}

\subsection*{Extension of multiplication}

The rest of the section concerns the following two  theorems.

\begin{thm}\label{multCiffk}
Let $k\in \mathbb{N}$.

\emph{(A)}~The multiplication on $U(\mathfrak{f}_k)$ extends to a continuous multiplication on $C^\infty_{\mathfrak{f}_k}$.

\emph{(B)}~$C^\infty_{\mathfrak{f}_k}$ is locally in $\mathsf{PG}$.
\end{thm}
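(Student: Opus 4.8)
The plan is to adapt the scheme used for Theorems~\ref{PGqupl},~\ref{PGSL2R} and~\ref{PGtrifdL}: realize $C^\infty_{\ff_k}$ as a closed subalgebra of a product of algebras of the form $\rT_p(C^\infty(M))$ and then invoke Corollary~\ref{maniTpPGL}. Since $\ff_k$ is free, a homomorphism from $U(\ff_k)$ into any algebra amounts to a \emph{free} choice of the images of $e_1,\ldots,e_k$; so for each $p\in\N$ I would pick $k$ smooth $\rT_p$-valued functions on a suitable Hausdorff smooth manifold $M_p$ (with countable base), obtaining a homomorphism $\sigma_p\!:U(\ff_k)\to \rT_p(C^\infty(M_p))$. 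By Corollary~\ref{maniTpPGL} the target is a $\mathsf{PGL}$ algebra, so Theorem~\ref{orcalgen} (applied with $B=\rT_p(C^\infty(M_p))$) extends $\sigma_p$ to a continuous linear map $\wt\sigma_p\!:C^\infty_{\ff_k}\to \rT_p(C^\infty(M_p))$. These assemble into one continuous linear map $\rho\!:C^\infty_{\ff_k}\to P:=\prod_{p=1}^\infty \rT_p(C^\infty(M_p))$, and, by construction, the restriction of $\rho$ to the dense subalgebra $U(\ff_k)$ is an algebra homomorphism into~$P$.

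Assuming the $\sigma_p$ are chosen so that $\rho$ is topologically injective, the rest is formal. For finite $k$ the index set $\De$ is finite, so $C^\infty(X)$ is a Fr\'echet space, hence so is $[T](C^\infty(X))=\prod_n C^\infty(X)^{\ptn n}$ and so is the projective tensor product $C^\infty_{\ff_k}$ in~\eqref{defCifk}; therefore $\rho(C^\infty_{\ff_k})$ is complete and hence closed in~$P$. Since $U(\ff_k)$ is dense in $C^\infty_{\ff_k}$, we get $\rho(C^\infty_{\ff_k})=\overline{\rho(U(\ff_k))}$, which, being the closure of a subalgebra of $P$ (whose multiplication is jointly continuous), is a closed subalgebra of~$P$. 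Transporting the multiplication of $\rho(C^\infty_{\ff_k})$ back through the topological isomorphism $\rho$ turns $C^\infty_{\ff_k}$ into a topological algebra; its multiplication extends that of $U(\ff_k)$ because $\rho|_{U(\ff_k)}$ is a homomorphism and $\rho$ is injective, and it is the unique continuous such extension by density. This is assertion~(A), and assertion~(B) follows at once from Corollary~\ref{maniTpPGL} applied to the closed subalgebra $\rho(C^\infty_{\ff_k})\subseteq P$.

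It remains to carry out the injectivity step for $k\le 2$. The case $k=1$ is trivial: then $X=\emptyset$, $[T](C^\infty(X))=\R$, so $C^\infty_{\ff_1}\cong C^\infty(\R)$, which lies in $\mathsf{PGL}$ by Corollary~\ref{maniTpPGL}. For $k=2$, where $\De=\{(2,1)\}$ and $X=\R^2$, I would take for each $p\in\N$ the representation of $U(\ff_2)$ on $\rT_p$ sending $e_1$ to a parametrized diagonal matrix with pairwise distinct diagonal entries and $e_2$ to a parametrized matrix of the form ``diagonal $+$ the nilpotent shift $E_p$'' from~\eqref{defKnEn}, so arranged that the images of all commutators --- in particular of the free generators $g_{\de,\be}$ of $[\ff_2,\ff_2]$ and of their products --- land in the strictly upper triangular, hence nilpotent, part of $\rT_p$; treating the matrix entries as variables produces the homomorphisms $\sigma_p=\wt\pi_p$. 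Using the decomposition~\eqref{Ufdec}--\eqref{defCifk}, one rewrites $C^\infty_{\ff_2}$ as $\prod_{n\ge 0}C^\infty(\R^2\times(\R^2)^n)$ and, selecting (as in the proofs of Theorems~\ref{PGqupl} and~\ref{PGSL2R}, and legitimately by Part~(B) of Lemma~\ref{auxtopin}) only the relevant matrix entries of the $\wt\pi_p$, one shows that $\rho$ splits as a product over $n$ of maps, each of which is a composition of a precomposition by a diffeomorphism (a topological isomorphism), a multiplication by a fixed tuple of polynomials (which has closed range by Proposition~\ref{fgclid}) and a finite-rank correction. Each such map is injective with closed range, hence topologically injective by the inverse mapping theorem for Fr\'echet spaces, and Part~(A) of Lemma~\ref{auxtopin} then gives topological injectivity of $\rho$.

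The main obstacle is precisely this last, bookkeeping-heavy step: one has to produce representations whose matrix entries reproduce, without cancellation, all the coefficient functions occurring in a general element of $C^\infty_{\ff_k}$, which requires analyzing how iterated operators $\ad e_{i_1}\cdots \ad e_{i_s}$ act on the free generators of $[\ff_k,\ff_k]$ and how the outcomes sit inside the formal tensor algebra $[T](C^\infty(X))$. For $k=2$ there is a single family $\De=\{(2,1)\}$ of generators and the combinatorics stays manageable; for $k\ge 3$ the interaction of the several families $g_{\de,\be}$ makes the analysis substantially more delicate, which is why that case is postponed to~\cite{A13}.
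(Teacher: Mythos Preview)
Your overall architecture is exactly the one the paper uses, and your reduction to showing that a single continuous linear map $\rho\!:C^\infty_{\ff_2}\to\prod_m\rT_{m+1}(C^\infty(M_m))$ is topologically injective is correct. The gap is in the last paragraph. After selecting the upper-right matrix entries and writing $C^\infty_{\ff_2}=\prod_{n\ge 0}C_n$ with $C_n=C^\infty(\R^2\times X^n)$, the resulting map $\rho'\!:\prod_n C_n\to\prod_m Y_m$ does \emph{not} split as a product over~$n$. Concretely, for $w\in X^m$ and $n<m$ the element $\Phi_n(h)$ is a sum of terms $e_1^{\al_1}e_2^{\al_2}g_{\be^1}\cdots g_{\be^n}$, and under $\wt\te_{\boldsymbol{\la},w}$ the $g_{\be^p}$ land in the strictly upper triangular part while the $e_i$ are only upper triangular; the product of $n<m$ strictly upper triangular factors together with the upper triangular factors still reaches the upper-right corner. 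So the matrix $(\psi_{nm}\!:C_n\to Y_m)$ has nonzero entries for all $n\le m$, and these off-diagonal contributions are neither zero nor finite-rank. Part~(A) of Lemma~\ref{auxtopin} therefore does not apply.

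What the paper does instead is to observe (Lemma~\ref{annh}) that $\psi_{nm}=0$ for $n>m$, so the matrix is \emph{lower triangular}, and then prove a separate lemma (Lemma~\ref{auxtopiC}) to the effect that a lower triangular $\N\times\N$ family of continuous linear maps between Fr\'echet spaces with topologically injective diagonal induces a topologically injective map on the products; this uses Dugundji's extension theorem to build compatible continuous (nonlinear) left inverses. Only after this reduction does one need the explicit formula for the diagonal entry $\psi_{mm}$ (Lemma~\ref{manyQwek}), which is precisely a change of variables followed by multiplication by the polynomial $s_1\cdots s_m$ --- no ``finite-rank correction'' enters here. Your sketch would go through once you replace ``splits as a product over $n$'' by ``is lower triangular with topologically injective diagonal'' and supply the triangular-inverse argument.
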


\begin{thm}\label{fuctcafree}
Let $k\in \mathbb{N}$. Then the algebra $C^\infty_{\mathfrak{f}_k}$ together with the embedding $U(\mathfrak{f}_k)\to C^\infty_{\mathfrak{f}_k}$ is an envelope with respect to $\mathsf{PG}$, i.e., $\widehat U(\mathfrak{f}_k)^{\,\mathsf{PG}}\cong {\mathscr F}_{k}^{\,\mathsf{PG}} \cong C^\infty_{\mathfrak{f}_k}$.
\end{thm}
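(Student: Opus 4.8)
The plan is to derive Theorem~\ref{fuctcafree} from Theorems~\ref{orcalgen} and~\ref{multCiffk} by a purely formal argument. Since the identification $\cF_k^{\,\mathsf{PG}}\cong\wh U(\ff_k)^{\,\mathsf{PG}}$ has already been recalled, it suffices to prove that the embedding $\iota\colon U(\ff_k)\to C^\infty_{\ff_k}$ is an envelope of $U(\ff_k)$ with respect to $\mathsf{PG}$; the chain $\wh U(\ff_k)^{\,\mathsf{PG}}\cong\cF_k^{\,\mathsf{PG}}\cong C^\infty_{\ff_k}$ then follows from uniqueness of the envelope (Proposition~\ref{exisenv}). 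By Theorem~\ref{multCiffk}, $C^\infty_{\ff_k}$ is a $\mathsf{PGL}$ algebra and, with respect to the extended multiplication, $\iota$ is a continuous homomorphism; its image is dense, because $\R[e_1,\ldots,e_k]$ is dense in $C^\infty(\R^k)$, $V$ is dense in $C^\infty(X)$, and the finitely supported tensors $T(V)$ are dense in $[T](C^\infty(X))=\prod_n C^\infty(X)^{\ptn n}$. It then remains to verify the universal property of Remark~\ref{simpdef}, and by the last sentence of that remark it is enough to consider $B$ in $\mathsf{PG}$.

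So I would take a Banach algebra $B$ of polynomial growth and a continuous homomorphism $\phi\colon U(\ff_k)\to B$, set $b_j:=\phi(e_j)$, and observe that, since $U(\ff_k)$ is freely generated by $e_1,\ldots,e_k$, the map $\phi$ coincides with the homomorphism $\te$ of Theorem~\ref{orcalgen}. That theorem therefore provides a continuous \emph{linear} map $\widehat\phi\colon C^\infty_{\ff_k}\to B$ with $\widehat\phi\,\iota=\phi$ (in particular $\widehat\phi$ preserves the unit). The remaining task is to show $\widehat\phi$ is multiplicative. Fix $a'$ in the dense subalgebra $\iota(U(\ff_k))$: the maps $a\mapsto\widehat\phi(aa')$ and $a\mapsto\widehat\phi(a)\,\widehat\phi(a')$ are both continuous $C^\infty_{\ff_k}\to B$ --- the first because the multiplication on $C^\infty_{\ff_k}$ is separately continuous and $\widehat\phi$ is continuous, the second because the multiplication on $B$ is continuous --- and they agree on $\iota(U(\ff_k))$ since there $\widehat\phi=\phi$ is a homomorphism and the multiplication on $C^\infty_{\ff_k}$ restricts to that of $U(\ff_k)$ by Theorem~\ref{multCiffk}(A). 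By density they agree for every $a$. Fixing now an arbitrary $a\in C^\infty_{\ff_k}$ and running the same density argument in the second variable yields $\widehat\phi(aa')=\widehat\phi(a)\,\widehat\phi(a')$ for all $a,a'$. Uniqueness is immediate: any continuous homomorphism $C^\infty_{\ff_k}\to B$ extending $\phi$ agrees with $\widehat\phi$ on the dense subalgebra $\iota(U(\ff_k))$, hence everywhere. This establishes the universal property, and the theorem follows.

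Honestly, in this theorem there is essentially no obstacle left: all the substance is in Theorems~\ref{orcalgen} and~\ref{multCiffk} (and, for the case $k=2$ treated here, in their proofs). The single point that needs a little care is the passage from a continuous linear extension to a multiplicative one, which I would handle by the separate-continuity-and-density argument above; one could also appeal to joint continuity of the multiplication on the Fr\'echet algebra $C^\infty_{\ff_k}$, since Fr\'echet spaces are barrelled, but that is not needed.
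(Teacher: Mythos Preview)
Your proof is correct and follows essentially the same approach as the paper, which simply says that Theorem~\ref{multCiffk} gives $C^\infty_{\ff_k}\in\mathsf{PGL}$ and the universal property follows from Theorem~\ref{orcalgen}. You have spelled out the one implicit step---that the continuous linear extension from Theorem~\ref{orcalgen} is multiplicative by density of $\iota(U(\ff_k))$ and separate continuity of the multiplication---which the paper leaves to the reader (cf.\ the analogous passage in the proof of Theorem~\ref{PGqupl}(B)).
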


These results show that $C^\infty_{\mathfrak{f}_k}$ are deserved to be named algebras of `free $C^\infty$- functions'.
The proof of Theorem~\ref{multCiffk} is quite technical. We give a proof in the case when $k\leqslant 2$, which is a little simpler. The case of general $k$ will be published in a subsequent paper.

On the other hand, Theorem~\ref{fuctcafree} easily follows from
previous results. Taking for granted Theorem~\ref{multCiffk} for every $k\in \mathbb{N}$, Theorem~\ref{fuctcafree} can be deduced is the following way.

\begin{proof}[Proof of Theorem~\ref{fuctcafree}]
Theorem~\ref{multCiffk} implies that  $C^\infty_{\mathfrak{f}_k}$ is in $\mathsf{PGL}$. The desired universal property follows from Theorem~\ref{orcalgen}.
\end{proof}

The main idea of the proof of Theorem~\ref{multCiffk} is the same as for Theorem~4.3 in~\cite{ArOld} and Theorems~\ref{PGqupl} and~\ref{PGSL2R} in this paper. We endow $U(\mathfrak{f}_k)$ with the topology inherited from $C^\infty_{\mathfrak{f}_k}$ and construct a topologically injective homomorphism from $U(\mathfrak{f}_k)$ to a product of algebras of the form $C^\infty(M,{\mathrm T}_p)$, where $M$ is a manifold.

When $k=1$, the set $X$ is empty and then $C^\infty_{\mathfrak{f}_1}=C^\infty(\mathbb{R})$. So  the result is implied by the corresponding assertion in the commutative case, Proposition~\ref{envpol} with $n=1$.

Assume now that $k=2$. In this case $X=\mathbb{R}^2$. For the proof we need several preliminary constructions.

Let $m\in\mathbb{N}$ and $w=(\mathbf{x}_1,\ldots, \mathbf{x}_m)\in X^m$, where  $x_p=(s_p,t_p)$.
For $m\geqslant 1$ consider the following diagonal matrices:
$$
E_w\!:=\diag(0,s_1',\ldots, s_m'),\quad\text{and}\quad
F_w\!:=\diag(t_1',\ldots, t_m',0),
$$
where $t_p'=t_p+\cdots +t_m$ and $s_p'=s_1+\cdots +s_p$. For $m=0$ put $E_\emptyset=F_\emptyset=0$. Take  the Jordan block $J$ of order $m+1$ with zero eigenvalue, i.e.,
\begin{equation*}
J\!:=
\begin{pmatrix}
0& 1&&& \\
&0& 1&& \\
 && \ddots& \ddots&\\
 & &&0 &1\\
 & &&&0
\end{pmatrix}
\end{equation*}
and consider the representations of $\mathfrak{f}_2$ determined by
\begin{equation}\label{zexgen}
\theta_w(e_1)\!:= E_w\quad\text{and}\quad
\theta_w(e_2)=F_w-J.
\end{equation}

We need several simple formulas. First, it is easy to see that if $D\!:=\diag(r_1,\ldots,r_{m+1})$ and
\begin{equation*}
M\!:=
\begin{pmatrix}
0& q_1&0&& \\
&0& q_2&\ddots& \\
 && \ddots& \ddots&0\\
 & &&0 &q_m\\
 & &&&0
\end{pmatrix},
\end{equation*}
then
\begin{equation}\label{DMcomm}
 [D,M]=
\begin{pmatrix}
0& (r_1-r_2)q_1&0&& \\
&0& (r_2-r_3)q_2&\ddots& \\
 && \ddots& \ddots&0\\
 & &&0 &(r_m-r_{m+1})q_m\\
 & &&&0
\end{pmatrix}.
\end{equation}
In particular,
\begin{equation}\label{EwMcomm}
[E_w,M]=
\begin{pmatrix}
0& s_1q_1&0&& \\
&0& s_2q_2&\ddots& \\
 && \ddots& \ddots&0\\
 & &&0 &s_mq_m\\
 & &&&0
\end{pmatrix}
,
\end{equation}

\begin{equation}\label{FwMcomm}
[F_w,M]=
\begin{pmatrix}
0& t_1q_1&0&& \\
&0& t_2q_2&\ddots& \\
 && \ddots& \ddots&0\\
 & &&0 &t_mq_m\\
 & &&&0
\end{pmatrix}
\end{equation}
It is also evident that
\begin{equation}\label{deffc2}
[J,M]\in \mathfrak{c}_2,
\end{equation}
where $\mathfrak{c}_2$ denotes the set of triangular matrices with zeros on the first and second diagonals.

Since $\Delta$ is a one-point set, we write for simplicity $g_\beta$ instead of $g_{\delta,\beta}$. Thus for $\beta=(\beta_1,\beta_2)\in\mathbb{Z}_+^2$ we have from~\eqref{notg} that
\begin{equation}\label{notg2}
 g_{\beta}=(\ad{e_1})^{\beta_1}(\ad{e_2})^{\beta_2+1}(e_1).
\end{equation}

\begin{lm}\label{onefactre}
Let $\beta_1,\beta_2\in \mathbb{Z}$, $m\in\mathbb{N}$  and $w=(\mathbf{x}_1,\ldots, \mathbf{x}_m)\in X^m$, where  $x_p=(s_p,t_p)$ for each $p=1,\ldots,m$.
Then
\begin{equation*}
\theta_w(g_{\beta})-
\begin{pmatrix}
0& s_1^{\beta_1+1}t_1^{\beta_2}&0&& \\
&0& s_2^{\beta_1+1}t_2^{\beta_2}&& \\
 && \ddots& \ddots&0\\
 & &&0 & s_m^{\beta_1+1}t_m^{\beta_2}\\
 & &&&0
\end{pmatrix}
\in \mathfrak{c}_2,
\end{equation*}
\end{lm}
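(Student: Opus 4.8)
The plan is to compute $\te_w(g_\be)$ directly from the formula $\te_w(g_\be)=(\ad\te_w(e_1))^{\be_1}(\ad\te_w(e_2))^{\be_2+1}(\te_w(e_1))$, where $\te_w(e_1)=E_w$ and $\te_w(e_2)=F_w-J$ by~\eqref{zexgen}, organising the calculation by the grading of $(m+1)\times(m+1)$ matrices according to their diagonals and feeding in the commutator identities~\eqref{DMcomm}--\eqref{deffc2}. Write $\cD_d$ for the subspace of matrices supported on the $d$-th diagonal above the main one, so that $\cD_0$ is the space of diagonal matrices, $\cD_1$ the space of matrices with a single nonzero superdiagonal, and $\fc_2=\bigoplus_{d\ge 2}\cD_d$. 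Since $E_w$ and $F_w$ are diagonal, $\ad E_w$ and $\ad F_w$ preserve each $\cD_d$; and since $J\in\cD_1$, one checks directly that $\ad J$ maps $\cD_d$ into $\cD_{d+1}$. Hence all three operators, and therefore $\ad\te_w(e_1)$ and $\ad\te_w(e_2)$, carry $\fc_2$ into itself and carry $\cD_1$ into $\cD_1\oplus\fc_2$.

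I would then record the induced action on $\cD_1$ modulo $\fc_2$. Identifying $\cD_1$ with $\R^m$ via the superdiagonal entries $(q_1,\dots,q_m)$, formula~\eqref{EwMcomm} says $\ad E_w$ acts by $(q_p)\mapsto(s_pq_p)$, formula~\eqref{FwMcomm} says $\ad F_w$ acts by $(q_p)\mapsto(t_pq_p)$, and~\eqref{deffc2} says $\ad J$ annihilates $\cD_1$ modulo $\fc_2$; thus, modulo $\fc_2$, $\ad\te_w(e_1)$ multiplies the $p$-th entry by $s_p$ and $\ad\te_w(e_2)=\ad F_w-\ad J$ multiplies it by $t_p$. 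The one step that really uses the specific shape of $E_w$ is the first application of $\ad\te_w(e_2)$, which moves the diagonal matrix $E_w\in\cD_0$ into $\cD_1$: here $[\te_w(e_2),E_w]=[F_w,E_w]-[J,E_w]=-[J,E_w]$ since $F_w$ and $E_w$ are diagonal, and~\eqref{DMcomm} applied with $D=E_w=\diag(0,s_1',\dots,s_m')$ and $M=J$ shows that $[J,E_w]\in\cD_1$ with $p$-th superdiagonal entry $s_p'-s_{p-1}'=s_p$ (a telescoping sum, with $s_0':=0$). So $\ad\te_w(e_2)(E_w)$ lies exactly in $\cD_1$, with superdiagonal entries $\pm s_p$.

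Assembling these facts would finish the proof: starting from the superdiagonal entries $\pm s_p$ produced by the first application of $\ad\te_w(e_2)$, the remaining $\be_2$ applications of $\ad\te_w(e_2)$ multiply each by $t_p$, and the subsequent $\be_1$ applications of $\ad\te_w(e_1)$ multiply each by $s_p$, so that $\te_w(g_\be)$ coincides modulo $\fc_2$ with the element of $\cD_1$ whose $p$-th superdiagonal entry is $s_p^{\be_1+1}t_p^{\be_2}$ --- the overall sign being determined by the bridge-step computation of $[J,E_w]$. Apart from that bridge step the argument is pure bookkeeping; the delicate point, and the reason $E_w$ and $F_w$ are built from the partial sums $s_p'$ and $t_p'$ rather than directly from $s_p$ and $t_p$, is precisely the telescoping that turns the off-diagonal entry of $[J,E_w]$ (and the analogous $t_p'-t_{p+1}'=t_p$ for $F_w$) into a single variable, and this is where the signs must be pinned down.
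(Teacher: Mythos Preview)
Your argument is correct and is essentially the paper's own proof, just organised more explicitly through the grading $\cD_d$; the paper does the same base-case computation $\te_w(g_{(0,0)})=[F_w-J,E_w]$ via~\eqref{DMcomm} and then inducts using~\eqref{EwMcomm}, \eqref{FwMcomm}, \eqref{deffc2} together with the observation that $\fc_2$ is preserved. Your caution about the overall sign is well placed: the bridge step gives $[F_w-J,E_w]=-[J,E_w]$ with superdiagonal entries $-s_p$, so the statement (and the paper's displayed base case) is off by a global sign, which of course does not affect the use of the lemma downstream.
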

\begin{proof}
It follows from~\eqref{EwMcomm} and~\eqref{notg2}  that
\begin{equation*}
\theta_w(g_0)=[F_w-J,E_w]=
\begin{pmatrix}
0& s_1&0&& \\
&0& s_2&\ddots& \\
 && \ddots& \ddots&0\\
 & &&0 &s_m\\
 & &&&0
\end{pmatrix}.
\end{equation*}

Then, applying inductively~\eqref{deffc2},~\eqref{FwMcomm}  and~\eqref{EwMcomm} and using the fact that $\mathfrak{c}_2$ is a Lie ideal, we get the result.
\end{proof}

For $\boldsymbol{\lambda}=(\lambda_1,\lambda_2)\in\mathbb{R}^2$ and $w\in X^m$ put also
\begin{equation}\label{defwte}
\widetilde\theta_{\boldsymbol{\lambda},w}\!: e_i\mapsto \lambda_i+\theta_w(e_i)\qquad (i=1,2).
\end{equation}
This family of homomorphisms is basic for us. Let
\begin{equation}\label{Qmdef}
Q_m\!:=
\begin{pmatrix}
0& 0&\cdots&&1 \\
&0& 0&& \\
 && \ddots& \ddots&\vdots\\
 & &&0 &0\\
 & &&&0
\end{pmatrix}
\end{equation}
when $m\geqslant 0$ and $Q_0=0$ (a scalar).

\begin{lm}\label{manyQk}
Let $m\in\mathbb{N}$, $w=(\mathbf{x}_1,\ldots, \mathbf{x}_m)\in X^m$ and $\beta^p\in \mathbb{Z}_+^{2}$, where  $x_p=(s_p,t_p)$ and $\beta^p=(\beta^p_1,\beta^p_2)$ for each $p=1,\ldots,m$.
Then
\begin{equation}\label{wtelaw}
\theta_{w}(g_{\beta^1}\cdots g_{\beta^m})= Q_m\,
\prod_{p=1}^m s_p^{\beta^p_1+1}\,t_p^{\beta^p_2}.
\end{equation}
\end{lm}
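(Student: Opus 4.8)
The plan is to prove~\eqref{wtelaw} by induction on $m$. For the base case $m=1$, Lemma~\ref{onefactre} already tells us that $\te_w(g_{\be^1})$ agrees with the matrix having $s_1^{\be^1_1+1}t_1^{\be^1_2}$ as its single superdiagonal entry (in the $(1,2)$ position, since $m=1$ gives a $2\times2$ block) modulo $\fc_2$, i.e.\ modulo matrices vanishing on the first two diagonals; but for a $2\times 2$ matrix $\fc_2=0$, so in fact $\te_w(g_{\be^1})=Q_1\,s_1^{\be^1_1+1}t_1^{\be^1_2}$ exactly. For the inductive step, I would write $g_{\be^1}\cdots g_{\be^m}=g_{\be^1}\cdot(g_{\be^2}\cdots g_{\be^m})$ and compute the product of the corresponding matrices. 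The key observation is that $\te_w(g_{\be^1})$ lies in the span of the first superdiagonal modulo $\fc_2$, whereas the inductive hypothesis applied to the truncated tuple identifies $\te_w(g_{\be^2}\cdots g_{\be^m})$ with a multiple of $Q_{m-1}$, a matrix supported on the extreme upper-right corner.

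The main point is a bookkeeping argument about where the nonzero entries sit. Matrices in $\fc_2$ are supported strictly above the first superdiagonal; $Q_m$ is the rank-one matrix with a single $1$ in the top-right corner of an $(m+1)\times(m+1)$ block. When one multiplies a matrix that is ``first superdiagonal plus $\fc_2$'' by one that is ``$Q_{m-1}$ placed in the lower-right $m\times m$ sub-block'', the $\fc_2$-part contributes nothing to the top-right corner for degree reasons (the product of something supported at distance $\ge 2$ from the diagonal with something supported at distance $\ge 1$ lands at distance $\ge m+1$ in an $(m+1)$-dimensional block, hence is zero \emph{except} possibly in the corner, and one checks the $\fc_2$ contribution there also vanishes because the corner of the product requires \emph{consecutive} superdiagonal steps). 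Thus only the genuine first-superdiagonal part of $\te_w(g_{\be^1})$, whose entries are $s_p^{\be^1_1+1}t_p^{\be^1_2}$ by Lemma~\ref{onefactre}, survives, and the matrix product telescopes along the superdiagonal to produce $Q_m$ times the product $\prod_{p=1}^m s_p^{\be^p_1+1}t_p^{\be^p_2}$. Here I must be slightly careful that the indexing of $w$ and of the exponents $\be^p$ matches up correctly between the two factors; a clean way is to note that the restriction of $\te_w$ to the lower-right $m\times m$ block corresponds precisely to $\te_{w'}$ for the truncated tuple $w'=(\mathbf{x}_2,\ldots,\mathbf{x}_m)$, after a shift of indices, which is exactly the setting of the inductive hypothesis.

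I expect the main obstacle to be the precise tracking of which diagonal each term lives on and verifying that \emph{all} contributions from $\fc_2$ (and from the non-superdiagonal remainder in Lemma~\ref{onefactre}) to the top-right corner of the product vanish. This is where one really uses that multiplication strictly increases the ``distance from the diagonal'' and that there is exactly one way to reach the corner by a sequence of unit superdiagonal steps. Once that combinatorial fact is pinned down, the rest is a routine computation: the surviving piece is the product of $m$ matrices each supported on the first superdiagonal with entries read off from Lemma~\ref{onefactre}, and such a product is the rank-one corner matrix $Q_m$ scaled by the product of the entries, which is precisely the right-hand side of~\eqref{wtelaw}.
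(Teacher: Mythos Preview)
Your plan is workable but considerably more elaborate than what the paper does, and it contains one imprecise step. The paper gives a direct, non-inductive argument: since each $\te_w(g_{\be^p})$ is strictly upper triangular (Lemma~\ref{onefactre} gives its first superdiagonal), the product of $m$ such $(m{+}1)\times(m{+}1)$ matrices lies in $\fc_m$, i.e., is a scalar multiple of $Q_m$. The single surviving entry $(1,m{+}1)$ is computed as $\sum_{1=i_0<i_1<\cdots<i_m=m+1}\prod_p [M_p]_{i_{p-1},i_p}$; with only $m$ factors this forces $i_p=p{+}1$, so the corner equals $\prod_p [M_p]_{p,p+1}$. By Lemma~\ref{onefactre} the $(p,p{+}1)$ entry of $\te_w(g_{\be^p})$ is $s_p^{\be^p_1+1}t_p^{\be^p_2}$, and~\eqref{wtelaw} follows. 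This is precisely the combinatorial observation you state in your final paragraph; the induction is unnecessary scaffolding.

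The imprecise step is your assertion that ``the restriction of $\te_w$ to the lower-right $m\times m$ block corresponds precisely to $\te_{w'}$'' for $w'=(\mathbf{x}_2,\ldots,\mathbf{x}_m)$. If you check the definitions, the projection of $F_w$ does equal $F_{w'}$, but the projection of $E_w$ is $E_{w'}+s_1 I$, not $E_{w'}$. The scalar shift is harmless because the $g_\be$ are iterated commutators (so $\ad(\,\cdot\,+cI)=\ad(\,\cdot\,)$ and the innermost argument also sits inside a commutator), but this needs to be said; as written, the claim is false. Once patched, your induction goes through, but you have traded a two-line counting argument for a projection-compatibility verification that is no shorter.
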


It is important here that $m$ is both the number of factors on the left-hand side in~\eqref{wtelaw} and the length of the word $w$.

\begin{proof}
If $m=0$, then $w$ is an empty word and so the assertion is trivial.

Let $m\geqslant 1$. It is obvious  that $\theta_{w}(g_{\beta^1}\cdots g_{\beta^m})=\theta_{w}(g_{\beta^1})\cdots\theta_{w}(g_{\beta^m})$.
Note that the product of $m$ strictly upper-diagonal matrices have zero entries everywhere except possibly the upper-right corner. This entry depends only on the entries on the second diagonals and equals the product of the  $p$th entries in the $p$th matrix, where $p$ runs $\{1,\ldots,m\}$. Then it follows from Lemma~\ref{onefactre} that~\eqref{wtelaw} holds.
\end{proof}

Write $U(\mathfrak{f}_2)$ in the form~\eqref{Ufdec}. This means that
every $a\in U(\mathfrak{f}_2)$ can be uniquely written as a finite sum
$$
\sum_{m=0}^\infty \Phi_m(h_m),
$$
where
\begin{multline}\label{Phimdefk2}
\Phi_m\!: \mathbb{R}[\lambda_1,\lambda_2]\otimes  \mathbb{R}[\mu_1,\mu_2]\otimes\cdots\otimes \mathbb{R}[\mu_1,\mu_2]\to U(\mathfrak{f}_2)\!:\\
 f_0\otimes f_1\otimes  \cdots\otimes f_m \mapsto \Phi_0(f_0)\Psi(f_1)([e_2,e_1])\cdots \Psi(f_m)([e_2,e_1]).
\end{multline}
Here $\Phi_0$ is the ordered calculus $\mathbb{R}[\lambda_1,\lambda_2]\to U(\mathfrak{f}_2)\!:\lambda_i\mapsto e_i$ and $\Psi$ is  defined in~\eqref{orcalPsi}.

We consider $h_m$ as an element of $\mathbb{R}[\lambda_1,\lambda_2,s_1, t_1,\ldots,s_m, t_m]$, which we denote, for brevity, by $\mathbb{R}[\boldsymbol{\lambda},\mathbf{x}_1,\ldots,\mathbf{x}_m]$,  i.e., $\boldsymbol{\lambda}=(\lambda_1,\lambda_2)$ and $\mathbf{x}_p=(s_p,t_p)$.

\begin{lm}\label{manyQwek}
Suppose that the hypotheses of Lemma~\ref{manyQk} are satisfied and
$h\in\mathbb{R}[\boldsymbol{\lambda},\mathbf{x}_1,\ldots,\mathbf{x}_m]$. Then
\begin{equation}\label{PhimtrQmma}
\widetilde\theta_{\boldsymbol{\lambda},w}(\Phi_m(h))=s_1\cdots s_m\, h(\lambda_1,\lambda_2+t_1+\cdots+t_m,s_1,t_1,\ldots, s_m,t_m) Q_m
\end{equation}
when $m\geqslant 1$ and  $\widetilde\theta_{\boldsymbol{\lambda},\emptyset}(\Phi_0(h))=h(\lambda_1,\lambda_2)$ when $m=0$.
\end{lm}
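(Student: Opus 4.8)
The plan is to reduce~\eqref{PhimtrQmma} to the case of a monomial and then to combine Lemma~\ref{manyQk} with one short matrix computation. Since $\Phi_m$ and both sides of~\eqref{PhimtrQmma} are linear in $h$, it suffices to treat $h=\la_1^{a_1}\la_2^{a_2}\prod_{p=1}^m s_p^{\be_1^p}t_p^{\be_2^p}$ with $\be^p=(\be_1^p,\be_2^p)\in\Z_+^2$, viewed as the pure tensor $\la_1^{a_1}\la_2^{a_2}\otimes(\mu_1^{\be_1^1}\mu_2^{\be_2^1})\otimes\cdots\otimes(\mu_1^{\be_1^m}\mu_2^{\be_2^m})$. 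For $m=0$ the matrices in~\eqref{zexgen} have order~$1$, so $E_w=F_w=J=0$ and $\wt\te_{\boldsymbol{\la},\emptyset}(e_i)=\la_i$; the ordered calculus $\Phi_0$ then simply substitutes, giving $\wt\te_{\boldsymbol{\la},\emptyset}(\Phi_0(h))=h(\la_1,\la_2)$, as required.

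Assume now $m\ge1$. By~\eqref{Phimdefk2}, \eqref{orcalPsi} and~\eqref{notg2}, the chosen $h$ gives $\Phi_m(h)=\Phi_0(\la_1^{a_1}\la_2^{a_2})\,g_{\be^1}\cdots g_{\be^m}$ in $U(\ff_2)$. I would then apply the homomorphism $\wt\te_{\boldsymbol{\la},w}$ to this product. Each $g_{\be^p}$ lies in $[\ff_2,\ff_2]$, where $\wt\te_{\boldsymbol{\la},w}$ coincides with $\te_w$ (adding a central scalar to the generators does not change the value on a commutator), so $\wt\te_{\boldsymbol{\la},w}(g_{\be^1}\cdots g_{\be^m})=\te_w(g_{\be^1}\cdots g_{\be^m})$, which by Lemma~\ref{manyQk} equals $Q_m\prod_{p=1}^m s_p^{\be_1^p+1}t_p^{\be_2^p}$. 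For the first factor, the ordered calculus together with~\eqref{defwte} gives $\wt\te_{\boldsymbol{\la},w}(\Phi_0(\la_1^{a_1}\la_2^{a_2}))=(\la_1 I+E_w)^{a_1}(\la_2 I+F_w-J)^{a_2}$.

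It then remains to multiply these two matrices, and this is the only real content of the lemma. I would use that for any matrix $M$ of order $m+1$ the product $MQ_m$ has every column zero except the last, which equals the first column of $M$; hence only the first column of $(\la_1 I+E_w)^{a_1}(\la_2 I+F_w-J)^{a_2}$ is relevant. Since $\la_1 I+E_w$ is diagonal with $(1,1)$-entry $\la_1$ and $\la_2 I+F_w-J$ is upper triangular with $(1,1)$-entry $\la_2+t_1'=\la_2+t_1+\cdots+t_m$, that first column is $\la_1^{a_1}(\la_2+t_1+\cdots+t_m)^{a_2}$ times the first standard basis vector, so $(\la_1 I+E_w)^{a_1}(\la_2 I+F_w-J)^{a_2}Q_m=\la_1^{a_1}(\la_2+t_1+\cdots+t_m)^{a_2}\,Q_m$. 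Multiplying by the scalar furnished by Lemma~\ref{manyQk} and splitting off the factor $s_1\cdots s_m$ from $\prod_p s_p^{\be_1^p+1}$ yields exactly~\eqref{PhimtrQmma}. The point to be careful about is that $Q_m$ retains no information beyond the $(1,1)$-entries of the two triangular factors, and those entries encode precisely the shift $\la_2\mapsto\la_2+t_1+\cdots+t_m$ with no shift of $\la_1$ (this is why $E_w$ carries a leading $0$ while $F_w$ does not).
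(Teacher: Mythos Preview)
Your proof is correct and follows essentially the same approach as the paper: reduce to monomials by linearity, use Lemma~\ref{manyQk} for the commutator factors, and observe that left multiplication by the upper triangular matrices $\wt\te_{\boldsymbol{\la},w}(e_i)$ acts on $Q_m$ as scalar multiplication by their $(1,1)$-entries. The paper phrases the last step as the identities $\wt\te_{\boldsymbol{\la},w}(e_1)Q_m=\la_1Q_m$ and $\wt\te_{\boldsymbol{\la},w}(e_2)Q_m=(\la_2+t_1+\cdots+t_m)Q_m$, while you express it via the first-column observation; these are the same computation.
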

\begin{proof}
The case when $m=0$ is obvious. Let $m>0$.

Suppose first that  $h$ is a monomial, i.e.,
$$
h=\lambda_1^{\alpha_1}\lambda_2^{\alpha_2} s_1^{\beta^1_1}t_1^{\beta^1_2}\cdots s_m^{\beta^m_1}t_m^{\beta^m_2}.
$$
It follows from the definition of $\Phi_m$ that
$$
\Phi_m(h)= e_1^{\alpha_1}e_2^{\alpha_2} g_{\beta^1}\cdots g_{\beta^m},
$$
where $\beta^p=(\beta^p_1,\beta^p_2)$.
Then by the definition of $\widetilde\theta_{\boldsymbol{\lambda},w}$ (see~\eqref{defwte}),
$$
\widetilde\theta_{\boldsymbol{\lambda},w}(e_1)=\lambda_1+E_w,\qquad \widetilde\theta_{\boldsymbol{\lambda},w}(e_2)=\lambda_2+F_w-J.
$$
It is easy to see that
$$
\widetilde\theta_{\boldsymbol{\lambda},w}(e_1)Q_m=\lambda_1Q_m
\quad\text{and}\quad
\widetilde\theta_{\boldsymbol{\lambda},w}(e_2)Q_m=(\lambda_2+t_1+\cdots+t_m)Q_m
$$
(because $t_1'=t_1+\cdots +t_m$).
Therefore,
$$
\widetilde\theta_{\boldsymbol{\lambda},w}(e_1^{\alpha_1}e_2^{\alpha_2})Q_m=\lambda_1^{\alpha_1}(\lambda_2+t_1+\cdots+t_m)^{\alpha_2}Q_m.
$$
Note also that $\widetilde\theta_{\boldsymbol{\lambda},w}(a)=\theta_{w}(a) $  for every $a\in[\mathfrak{f}_2,\mathfrak{f}_2]$.
Then by Lemma~\ref{manyQk},
$$
\widetilde\theta_{\boldsymbol{\lambda},w}(e_1^{\alpha_1}e_2^{\alpha_2} g_{\beta^1}\cdots g_{\beta^m})=
\lambda_1^{\alpha_1}(\lambda_2+t_1+\cdots+t_m)^{\alpha_2}
\prod_{p=1}^m s_p^{\beta^p_1+1}\,t_p^{\beta^p_2+1}\, Q_m.
$$
It follows that \eqref{PhimtrQmma} holds for monomials. By the linearity of both sides of the equality, it holds also for every $h\in\mathbb{R}[\boldsymbol{\lambda},\mathbf{x}_1,\ldots,\mathbf{x}_m]$.
\end{proof}

The following simple lemma covers the case when the number of factors is greater than the length of the word. (The reverse case is more complicated but we do not need it.)

\begin{lm}\label{annh}
If $m<m'$, then $\widetilde\theta_{\boldsymbol{\lambda},w}(\Phi_{m'}(h))=0$ for all  $w\in X^m$,
$\boldsymbol{\lambda}\in\mathbb{R}^2$ and
$h\in\mathbb{R}[\boldsymbol{\lambda},\mathbf{x}_1,\ldots,\mathbf{x}_{m'}]$.
\end{lm}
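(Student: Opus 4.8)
The plan is to reduce to monomials and then invoke nilpotency of strictly upper triangular matrices. First I would note, exactly as in the proof of Lemma~\ref{manyQwek}, that both sides of the asserted identity are linear in~$h$, so it suffices to treat a monomial
$h=\la_1^{\al_1}\la_2^{\al_2}\,s_1^{\be^1_1}t_1^{\be^1_2}\cdots s_{m'}^{\be^{m'}_1}t_{m'}^{\be^{m'}_2}$.
For such an~$h$ the definition~\eqref{Phimdefk2} of $\Phi_{m'}$ gives $\Phi_{m'}(h)=e_1^{\al_1}e_2^{\al_2}\,g_{\be^1}\cdots g_{\be^{m'}}$ with $\be^p=(\be^p_1,\be^p_2)$. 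Applying the homomorphism $\wt\te_{\boldsymbol{\la},w}$ and using that it agrees with $\te_w$ on $[\ff_2,\ff_2]$, which contains every $g_{\be^p}$, we obtain
\[
\wt\te_{\boldsymbol{\la},w}(\Phi_{m'}(h))=\wt\te_{\boldsymbol{\la},w}(e_1)^{\al_1}\,\wt\te_{\boldsymbol{\la},w}(e_2)^{\al_2}\,\te_w(g_{\be^1})\cdots\te_w(g_{\be^{m'}}).
\]

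The second step is a size count. Since $w\in X^m$, the matrices $E_w$, $F_w$ and $J$ all have order $m+1$, so $\wt\te_{\boldsymbol{\la},w}$ takes values in the upper triangular $(m+1)\times(m+1)$ matrices. By Lemma~\ref{onefactre}, each $\te_w(g_{\be^p})$ differs from a matrix supported on the first superdiagonal by an element of $\fc_2$, hence is \emph{strictly} upper triangular. A product of $r$ strictly upper triangular matrices of order $n$ is supported on the superdiagonals of index $\ge r$, and therefore vanishes once $r>n-1$. Here $r=m'$ and $n=m+1$, and the hypothesis $m<m'$ gives $m'\ge m+1>n-1$; thus $\te_w(g_{\be^1})\cdots\te_w(g_{\be^{m'}})=0$. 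Multiplying on the left by $\wt\te_{\boldsymbol{\la},w}(e_1)^{\al_1}$ and $\wt\te_{\boldsymbol{\la},w}(e_2)^{\al_2}$ keeps the product zero, so $\wt\te_{\boldsymbol{\la},w}(\Phi_{m'}(h))=0$, and the general case follows by linearity.

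I do not expect any real difficulty: this is the ``simple lemma'' advertised in the text. The only points needing a little care are the bookkeeping that $w\in X^m$ forces matrix order exactly $m+1$ (so that having $m'>m$ factors genuinely exceeds the nilpotency index), and the computation---already carried out inside the proof of Lemma~\ref{manyQwek}---that $\Phi_{m'}$ sends a monomial to $e_1^{\al_1}e_2^{\al_2}g_{\be^1}\cdots g_{\be^{m'}}$. Everything else is the elementary fact that a product of $n$ strictly upper triangular $n\times n$ matrices is~$0$.
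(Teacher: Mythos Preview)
Your proof is correct and follows essentially the same route as the paper: write $\Phi_{m'}(h)$ as a sum of products whose last $m'$ factors are of the form $g_\be$, observe via Lemma~\ref{onefactre} that each $\te_w(g_{\be^p})$ is strictly upper triangular in $\rT_{m+1}$, and conclude by nilpotency that the product of $m'>m$ such matrices vanishes. Your write-up is simply more explicit about the reduction to monomials and the size count than the paper's two-line version.
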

\begin{proof}
It follows from~\eqref{orcalPsi} and~\eqref{Phimdefk2} that $\Phi_{m'}(h)$ is a sum of products of $m'+1$ factors, where all the factors except first is of the form $g_{\beta}$, $\beta\in\mathbb{Z}_+^2$. Lemma~\ref{onefactre} implies that
$\theta_{w}(g_{\beta^1})\cdots\theta_{w}(g_{\beta^{m'}})=0$ when $m<m'$ and this completes the proof.
\end{proof}

\begin{proof}[Proof of Theorem~\ref{multCiffk} in the case when $k=2$]
We prove parts (1) and (2) simultaneously.
It follows from Corollary~\ref{maniTpPGL} that it suffices to construct a homomorphism from $U(\mathfrak{f}_2)$  into a product of algebras of the form $C^\infty(\mathbb{R}^n,{\mathrm T}_{p})$ having a topologically injective linear extension to $C^\infty_{\mathfrak{f}_2}$.
For given $m\in\mathbb{N}$ put
$$
B_m\!:=C^\infty(\mathbb{R}^2\times X^m,{\mathrm T}_{m+1}).
$$
Consider the family $\{\widetilde\theta_{\boldsymbol{\lambda},w}\!:U(\mathfrak{f}_2)\to {\mathrm T}_{m+1}\}$ of homomorphisms defined in~\eqref{defwte}. Varying $\boldsymbol{\lambda}$ and $w$ we have the homomorphism
$$
\pi_m\!:U(\mathfrak{f}_2)\to B_m
$$
determined by
\begin{equation*}
\pi_{m}(a)(\boldsymbol{\lambda},w)\!:= \widetilde\theta_{\boldsymbol{\lambda},w}(a)\qquad (a\in U(\mathfrak{f}_2)).
\end{equation*}

Consider also the homomorphism
$$
\rho\!:U(\mathfrak{f}_2)\to \prod_{m=1}^{\infty} B_m \!:a\mapsto (\pi_m(a)).
$$
Since $\prod B_m$ is in $\mathsf{PGL}$ by Corollary~\ref{maniTpPGL}, it follows from Theorem~\ref{orcalgen} that $\rho$ extends to a continuous linear map from $C^\infty_{\mathfrak{f}_2}$ to $\prod B_m$. To complete the proof it suffices to show that $\rho$ is topologically injective.

Consider $B_m$ as a free $C^\infty(\mathbb{R}^2\times X^m)$-module and denote by $Y_m$ the direct summand that corresponds to the upper-right corner (see the definition of the matrix~$Q_m$ in~\eqref{Qmdef}). It is easy to see that $\prod Y_m$ is a direct factor in $\prod B_m$. So, by Part~(B) of Lemma~\ref{auxtopin}, it suffices to show that the composition of $\rho$ and the projection on $\prod Y_m$ is topologically injective. Denote this composition by $\rho'$.

By definition (see \eqref{defCifk}), $C^\infty_{\mathfrak{f}_2}=C^\infty(\mathbb{R}^2)\ptn [T](C^\infty(X))$, where $X=\mathbb{R}^2$. It follows from Proposition~\ref{desrfta} that  we can write $C^\infty_{\mathfrak{f}_2}=\prod_{n\in\mathbb{Z}_+}C_n$, where $C_n\!:=C^\infty(\mathbb{R}^2\times X^n)$. Then $\rho'$ can be written as $\prod_{n\in\mathbb{Z}_+}C_n \to \prod_{m\in\mathbb{Z}_+}Y_m$. It follows from Lemma~\ref{annh} that $\rho'$ is determined by a lower triangular infinite matrix and so, by Lemma~\ref{auxtopiC}, it suffices to check that $C_m\to Y_m$ is topologically injective for every $m$.

The existence of an extension to an ordered $C^\infty$-calculus given by Theorem~\ref{orcalgen} implies that \eqref{PhimtrQmma} in Lemma~\ref{manyQwek} holds for every $h\in C_m$.
We claim that the endomorphism of $C^\infty(\mathbb{R}^2\times X^m)$ defined by
$$
h\mapsto ((\boldsymbol{\lambda},w)\mapsto s_1\cdots s_m\, h(\lambda_1,\lambda_2+t_1+\cdots+t_m,s_1,t_1,\ldots, s_m,t_m))
$$
is topologically injective. Indeed, it is a composition of a map given by change of variables and a multiplication by a polynomial. The first map is an isomorphism since the change of variables is invertible. The second map is injective since each function taken to $0$ obviously vanishes when
$s_1\cdots s_m\ne 0$ and  also vanishes at other points by continuity.
Moreover,  the range of second map is closed by Proposition~\ref{fgclid} and then it is topologically injective, as well as the composition.
\end{proof}

\end{document}